\documentclass[11pt]{amsart}

\usepackage{a4wide, amsmath, amsfonts, amssymb, mathrsfs, amsthm, breqn}
\usepackage[hyperfootnotes=false]{hyperref}

\allowdisplaybreaks[2]

% Environments
% ------------
\numberwithin{equation}{section}

\newtheorem{theorem}{Theorem}[section]
\newtheorem{lemma}[theorem]{Lemma}
\newtheorem{corollary}[theorem]{Corollary}
\newtheorem{remark}[theorem]{Remark}
\newtheorem{proposition}[theorem]{Proposition}
\newtheorem{assumption}[theorem]{Assumption}
\newtheorem{remark*}{Remark}

% Shortcuts
% ---------
\newcommand{\1}{\mathbf{1}}
\newcommand{\dd}{\,\mathrm{d}}
\renewcommand{\d}{\mathrm{d}}
\newcommand{\D}{\mathrm{D}}
\newcommand{\E}{\mathbb{E}}
\newcommand{\R}{\mathbb{R}}
\newcommand{\N}{\mathbb{N}}
\renewcommand{\P}{\mathbb{P}}
\newcommand{\X}{\mathbb{X}}
\newcommand{\bX}{\mathbf{X}}
\newcommand{\cD}{\mathcal{D}}
\newcommand{\cL}{\mathcal{L}}
\newcommand{\cV}{\mathcal{V}}
\newcommand{\tf}{\tilde{f}}
\newcommand{\tF}{\widetilde{F}}
\newcommand{\tX}{\widetilde{X}}
\newcommand{\tbX}{\widetilde{\bX}}
\newcommand{\tbbX}{\widetilde{\X}}
\newcommand{\ty}{\tilde{y}}
\newcommand{\tY}{\widetilde{Y}}
\newcommand{\C}{C_{F,K,X,\tX}}
\newcommand{\p}{\frac{p}{2}}
\renewcommand{\epsilon}{\varepsilon}

% Title and Authors
% -----------------
\title[Rough functional differential equations]{Functional differential equations \\ driven by c{\`a}dl{\`a}g rough paths}

\author[Kwossek]{Anna P. Kwossek}
\address{Anna P. Kwossek, University of Vienna, Austria}
\email{anna.paula.kwossek@univie.ac.at}

\author[Neuenkirch]{Andreas Neuenkirch}
\address{Andreas Neuenkirch, University of Mannheim, Germany}
\email{neuenkirch@uni-mannheim.de}

\author[Pr{\"o}mel]{David J. Pr{\"o}mel}
\address{David J. Pr{\"o}mel, University of Mannheim, Germany}
\email{proemel@uni-mannheim.de}

\date{\today}

% Document
% --------------
\begin{document}
	
	\begin{abstract}
		The existence of unique solutions is established for rough differential equations (RDEs) with path-dependent coefficients and driven by c{\`a}dl{\`a}g rough paths. Moreover, it is shown that the associated solution map, also known as It{\^o}--Lyons map, is locally Lipschitz continuous. These results are then applied to various classes of rough differential equations, such as controlled RDEs and RDEs with delay, as well as to stochastic differential equations with delay. To that end, a joint rough path is constructed for a c{\`a}dl{\`a}g martingale and its delayed version, that corresponds to stochastic It{\^o} integration.
	\end{abstract}
	
	\maketitle
	
	\noindent \textbf{Key words:} It{\^o}--Lyons map, martingale, rough differential equation, delayed rough path, stochastic functional differential equation, stochastic differential equation with delay.
	
	\noindent \textbf{MSC 2020 Classification:} 60L20, 60H10.
	
	% Classification
	% --------------
	% 60L20 - Rough analysis (Rough paths)
	% 60H10 - Stochastic analysis (Stochastic ordinary differential equations (aspects of stochastic analysis))
	
	%\tableofcontents
	
	\section{Introduction}
	
	Stochastic functional differential equations, also known as stochastic delay differential equations, are a natural generalization of stochastic ordinary differential equations, allowing for path-dependent coefficients which may depend on past values of the generated random dynamics. Since numerous real-world phenomena show evidence of a dependence on the past as well as a stochastic behaviour, stochastic functional differential equations serve as mathematical models in many areas ranging from biology to finance. For classical introductions to stochastic functional differential equations we refer, e.g., to~\cite{Mohammed1984, Mohammed1998}.
	
	A deterministic approach to stochastic differential equations is provided by rough path theory, initiated by Lyons~\cite{Lyons1998}. Originally designed to treat stochastic ordinary differential equations, it has been extended in various directions, for instance, allowing to deal with stochastic Volterra equations~\cite{Deya2009}, reflected stochastic differential equations~\cite{Aida2015}, stochastic inclusion equations~\cite{Bailleul2021}, and different classes of stochastic partial differential equations~\cite{Hairer2011, Caruana2011}. These rough path approaches contributed many novel insights to the study of the aforementioned equations, such as, but not limited to, new well-posedness and stability results. Comprehensive introductions to rough path theory can be found, e.g., in~\cite{Lyons2007, Friz2020}.
	
	In the present paper, we study \emph{rough functional differential equations} (RFDEs)
	\begin{equation}\label{eq: intro RFDE}
		Y_t = y_t +\int_0^t b_s(Y) \dd s + \int_0^t \sigma_s(Y) \dd \bX_s, \quad t \in [0,T],
	\end{equation}
	where the driving signal~$\bX$ is a c{\`a}dl{\`a}g $p$-rough path for $p \in (2,3)$, the initial condition~$y$ is a given controlled path, and the coefficients~$b, \sigma$ are non-anticipative functionals, mapping a controlled path to a controlled path. Assuming a quadratic growth and a Lipschitz-type condition on the path-dependent coefficients~$b, \sigma$, which both are formulated on the space of controlled paths, we establish the existence of a unique solution to the RFDE~\eqref{eq: RFDE}. To that end, we rely on the theory of c{\`a}dl{\`a}g rough paths, as introduced by Friz, Shekhar and Zhang~\cite{Friz2017,Friz2018}, as well as Banach's fixed point theorem. Moreover, we show that the solution map, also known as It{\^o}--Lyons map, mapping the input (initial condition, coefficients, driving signal) of an RFDE to its solution, is locally Lipschitz continuous with respect to suitable distances on the associated spaces of coefficients, controlled paths and rough paths. Let us remark that the continuity of the It{\^o}--Lyons map is one of the most fundamental insights of rough path theory, with many applications to stochastic differential equations, cf.~e.g.~\cite{Friz2020}.
	
	The presented results on rough functional differential equations provide a unifying theory, recovering and extending various previous results on different classes of rough differential equations with path-dependent coefficients. Indeed, we deduce the existence of unique solutions as well as the local Lipschitz continuity of the It{\^o}--Lyons map for classical rough differential equations (RDEs), controlled RDEs, RDEs with discrete time dependence and RDEs with constant/variable delay, that are all driven by c{\`a}dl{\`a}g $p$-rough paths for $p \in (2,3)$.
	
	In the existing literature, there are several different approaches to deal with rough functional differential equations driven by continuous rough paths. Since the theory of (continuous) rough paths works nicely for infinite dimensional Banach spaces, RDEs with path-dependent coefficients can be treated as Banach space-valued RDEs, see e.g.~\cite{Bailleul2014}, which requires the coefficients to be Fr{\'e}chet differentiable and, thus, excludes some interesting examples. Existence, uniqueness and stability results are established by Neuenkirch, Nourdin and Tindel~\cite{Neuenkirch2008} for RDEs with constant delay. The existence of a solution is proven by Ananova~\cite{Ananova2023} for RDEs with path-dependent coefficients, which are assumed to be Dupire differentiable~\cite{Dupire2019}, and by Aida~\cite{Aida2024} for RDEs with coefficients containing path-dependent bounded variation terms. The latter two approaches rely on Schauder's fixed point theorem. Another exemplary class of RFDEs are reflected rough differential equations, see e.g.~\cite{Aida2015,Deya2019}, which, in general, do not possess a unique solution, see~\cite{Gassiat2021}.
	
	Most applications of rough path theory to stochastic differential equations (SDEs) crucially rely on the construction of suitable (random) rough paths. To apply the developed theory on RFDEs to It{\^o} SDEs with constant delay, we show that a c{\`a}dl{\`a}g martingale together with its delayed version can be lifted to a random rough path in the spirit of stochastic It{\^o} integration. The key challenge to obtain the ``delayed'' rough path is that a martingale together with its delayed version is, in general, not a martingale itself, thus preventing the direct use of stochastic It{\^o} integration. For related constructions of random rough paths above fractional Brownian motions we refer to~\cite{Neuenkirch2008, Tindel2012, Besalu2014, Chhaibi2020}. Consequently, one can apply the continuity of the It{\^o}--Lyons map to derive pathwise stability results for stochastic differential equations with constant delay, which plays an important role in many applications, see e.g.~\cite{Banos2018}. In particular, the map $y \mapsto Y$, mapping the initial condition~$y$ to the associated solution~$Y$ of an SDE with constant delay, is continuous on the space of controlled paths. This resolves an old observation, pointed out by Mohammed~\cite{Mohammed1986}, about the non-continuity of the flow of stochastic differential equations with delay, for which the initial condition is in fact an initial path.
	
	\medskip
	
	\noindent \textbf{Organization of the paper:} In Section~\ref{sec: RFDE} we provide existence, uniqueness and continuity results for rough functional differential equations. In Section~\ref{sec: examples} we prove that various classes of rough differential equations are covered by the presented results on rough functional differential equations. In Section~\ref{sec: rough path lift} we establish the existence of the It{\^o} rough path lift of delayed martingales and discuss applications to stochastic differential equations with delay. Appendix~\ref{sec: appendix} contains some auxiliary estimates for rough integrals.
	
	\medskip
	
	\noindent \textbf{Acknowledgments:} A.~P.~Kwossek and D.~J.~Pr{\"o}mel gratefully acknowledge financial support by the Baden-W{\"u}rttemberg Stiftung. A.~P.~Kwossek was affiliated with the University of Mannheim for the majority of this project's duration.
	
	\section{Existence, uniqueness and continuity}\label{sec: RFDE}
	
	Before treating rough functional differential equations (RFDEs), we recall the necessary definitions and some essentials from the theory of c{\`a}dl{\`a}g rough paths, as introduced by Friz and Shekhar~\cite{Friz2017} and Friz and Zhang~\cite{Friz2018}.
	The theory of c{\`a}dl{\`a}g rough paths extends the classical rough path theory, allowing to deal with many stochastic processes with jumps \cite{Chevyrev2018}, and has numerous applications, e.g., in probability theory~\cite{Friz2023}, numerical analysis~\cite{Friz2018} and mathematical finance~\cite{Allan2024}.
	
	\subsection{Essentials on rough path theory}
	
	Throughout, let $T > 0$ be a fixed finite time horizon. Let $\Delta_T := \{(s,t) \in [0,T]^2 \, : \, s \leq t\}$ be the standard $2$-simplex. A function $w \colon \Delta_T \to [0,\infty)$ is called a \emph{control function} if it is superadditive, in the sense that $w(s,u) + w(u,t) \leq w(s,t)$ for all $0 \leq s \leq u \leq t \leq T$. We write $w(s,t-) := \lim_{u \uparrow t} w(s,u)$ if $s < t$, and $w(s,t-) := 0$ if $s = t$.
	
	Whenever $(B,\|\cdot\|)$ is a normed space and $f, g \colon B \to \R$ are two functions on $B$, we shall write $f \lesssim g$ or $f \leq C g$ to mean that there exists a constant $C > 0$ such that $f(x) \leq C g(x)$ for all $x \in B$. The constant $C$ may depend on the normed space, e.g.~through its dimension or regularity parameters, and, if we want to emphasize the dependence of the constant $C$ on some particular variables, say $\alpha_1, \ldots, \alpha_n$, then we will write $\lesssim_{\alpha_1, \ldots, \alpha_n}$ or $C = C_{\alpha_1, \ldots, \alpha_n}$.
	Unless otherwise stated, the dependence of the implicit constants on the variables is locally bounded; that is, if $\alpha_1 \in A_1, \ldots, \alpha_n \in A_n$, where $A_1, \ldots, A_n$ are compact subsets of the range of $\alpha_1, \ldots, \alpha_n$ respectively, then we have that $\sup_{\alpha_1 \in A_1, \ldots, \alpha_n \in A_n} C_{\alpha_1, \ldots, \alpha_n} < \infty$.
	
	\medskip
	
	For two vector spaces, the space of linear maps from $E_1 \to E_2$ is denoted by $\cL(E_1; E_2)$; and we write $C^k_b = C^k_b(\R^m;\cL(\R^d;\R^n))$ for the space of $k$-times differentiable (in the Fr{\'e}chet sense) functions $f \colon \R^m \to \cL(\R^d;\R^n)$ such that $f$ and all its derivatives up to order $k$ are continuous and bounded. We equip this space with the norm
	\begin{equation*}
		\|f\|_{C^k_b} := \|f\|_\infty + \|\D f\|_\infty + \cdots + \|\D^k f\|_\infty,
	\end{equation*}
	where $\D^r f$ denotes the $r$-th order derivative of $f$, and $\|\hspace{0.5pt} \cdot \hspace{0.5pt}\|_{\infty}$ denotes the supremum norm on the corresponding spaces of operators.
	
	\medskip
	
	For a normed space $(E,|\cdot|)$, let $D([0,T];E)$ be the set of c{\`a}dl{\`a}g (right-continuous with left-limits) paths from $[0,T]$ to $E$. For $p \geq 1$, the $p$-variation of a path $X \in D([0,T];E)$ is given by
	\begin{equation*}
		\|X\|_p := \|X\|_{p,[0,T]} \qquad \text{with} \qquad \|X\|_{p,[s,t]} := \bigg(\sup_{\mathcal{P}\subset[s,t]} \sum_{[u,v]\in \mathcal{P}} |X_v - X_u|^p \bigg)^{\frac{1}{p}}, \quad (s,t) \in \Delta_T,
	\end{equation*}
	where the supremum is taken over all possible partitions $\mathcal{P}$ of the interval $[s,t]$. We recall that, given a path $X$, we have that $\|X\|_p < \infty$ if and only if there exists a control function $w$ such that\footnote{Here and throughout, we adopt the convention that $\frac{0}{0} := 0$.}
	\begin{equation*}
		\sup_{(u,v) \in \Delta_T} \frac{|X_v - X_u|^p}{w(u,v)} < \infty.
	\end{equation*}
	We write $D^p = D^p([0,T];E)$ for the space of paths $X \in D([0,T];E)$ which satisfy $\|X\|_p < \infty$. Moreover, for a path $X \in D([0,T];\R^d)$, we will often use the shorthand notation:
	\begin{equation*}
		X_{s,t} := X_t - X_s \quad \text{and} \quad X_{t-}:= \lim_{u\uparrow t} X_u, \qquad \text{for} \quad (s,t) \in \Delta_T.
	\end{equation*}
	
	For $p > 2$ and a two-parameter function $\X \colon \Delta_T \to E$, we similarly define
	\begin{equation*}
		\|\X\|_\p := \|\X\|_{\p,[0,T]} \qquad \text{with} \qquad \|\X\|_{\p,[s,t]} := \bigg(\sup_{\mathcal{P} \subset [s,t]} \sum_{[u,v] \in \mathcal{P}} |\X_{u,v}|^\p\bigg)^{\frac{2}{p}}, \quad (s,t) \in \Delta_T.
	\end{equation*}
	We write $D_2^\p = D_2^\p(\Delta_T;E)$ for the space of all functions $\X \colon \Delta_T \to E$ which satisfy $\|\X\|_\p < \infty$, and are such that the maps $s \mapsto \X_{s,t}$ for fixed $t$, and $t \mapsto \X_{s,t}$ for fixed $s$, are both c{\`a}dl{\`a}g.
	
	\medskip
	
	For $p \in (2,3)$, a pair $\bX = (X,\X)$ is called a \emph{c{\`a}dl{\`a}g $p$-rough path} over $\R^d$ if
	\begin{enumerate}
		\item[(i)] $X \in D^p([0,T];\R^d)$ and $\X \in D_2^{\p}(\Delta_T;\R^{d \times d})$, and
		\item[(ii)] Chen's relation: $\X_{s,t} = \X_{s,u} + \X_{u,t} + X_{s,u} \otimes X_{u,t}$ holds for all $0 \leq s \leq u \leq t \leq T$,
	\end{enumerate}
	where $\otimes$ denotes the usual tensor product. In component form then, condition~(ii) states that $\X^{ij}_{s,t} = \X^{ij}_{s,u} + \X^{ij}_{u,t} + X^i_{s,u} X^j_{u,t}$ for every $i$ and $j$. We will denote the space of c{\`a}dl{\`a}g $p$-rough paths by $\cD^p = \cD^p([0,T];\R^d)$. On the space $\cD^p([0,T];\R^d)$, we use the natural seminorm
	\begin{equation*}
		\|\bX\|_p := \|\bX\|_{p,[0,T]} \qquad \text{with} \qquad \|\bX\|_{p,[s,t]} := \|X\|_{p,[s,t]} + \|\X\|_{\p,[s,t]}
	\end{equation*}
	for $(s,t) \in \Delta_T$, and the induced distance
	\begin{equation*}
		\|\bX;\tbX\|_p := \|\bX;\tbX\|_{p,[0,T]} \qquad \text{with} \qquad
		\|\bX;\tbX\|_{p,[s,t]} := \|X - \tX\|_{p,[s,t]} + \|\X - \tbbX\|_{\p,[s,t]},
	\end{equation*}
	for $(s,t) \in \Delta_T$.
	
	\medskip
	
	Let $p \in (2,3)$, and $X \in D^p([0,T];\R^d)$. We say that a pair $(Y,Y')$ is a \emph{controlled path} (with respect to $X$), if
	\begin{equation*}
		Y \in D^p([0,T];E), \quad Y' \in D^p([0,T];\cL(\R^d;E)), \quad \text{and} \quad R^Y \in D^\p_2(\Delta_T;E),
	\end{equation*}
	where $R^Y$ is defined by
	\begin{equation*}
		Y_{s,t} = Y'_s X_{s,t} + R^Y_{s,t} \qquad \text{for all} \quad (s,t) \in \Delta_T.
	\end{equation*}
	We write $\cV^p_X = \cV^p_X([0,T];E)$ for the space of $E$-valued controlled paths, which becomes a Banach space when equipped with the norm $(Y,Y) \mapsto |Y_0| + \|Y,Y'\|_{X,p}$, where
	\begin{equation*}
		\|Y,Y'\|_{X,p} := \|Y,Y'\|_{X,p,[0,T]} \quad \text{with} \quad \|Y,Y'\|_{X,p,[s,t]} := |Y'_s| + \|Y'\|_{p,[s,t]} + \|R^Y\|_{\p,[s,t]}
	\end{equation*}
	for $(s,t) \in \Delta_T$. We point out that, by definition, for $(s,t) \in \Delta_T$,
	\begin{equation*}
		|Y_{s,t}| \leq |Y'_s| |X_{s,t}| + |R^Y_{s,t}| \qquad \text{and} \qquad |Y'_t| \leq |Y'_0| + |Y'_{0,t}|,
	\end{equation*}
	which implies that
	\begin{equation*}
		\|Y\|_p \leq C_p (\|Y'\|_\infty \|X\|_p + \|R^Y\|_\p)
		\qquad \text{and} \qquad
		\|Y'\|_\infty \leq |Y'_0| + \|Y'\|_p,
	\end{equation*}
	where $\|Y'\|_\infty := \sup_{t \in [0,T]} |Y'_t|$ denotes the supremum seminorm of the path $Y'$.
	
	Given $X, \tX \in D^p$, we further introduce the standard ``distance''
	\begin{equation*}
		\|Y,Y';\tY,\tY'\|_{X,\tX,p} := \|Y,Y';\tY,\tY'\|_{X,\tX,p,[0,T]}
	\end{equation*}
	with
	\begin{equation*}
		\|Y,Y';\tY,\tY'\|_{X,\tX,p,[s,t]} := |Y'_s - \tY'_s| + \|Y' - \tY'\|_{p,[s,t]} + \|R^Y - R^{\tY}\|_{\p,[s,t]}
	\end{equation*}
	for $(s,t) \in \Delta_T$, whenever $(Y,Y') \in \cV^p_X, (\tY,\tY') \in \cV^p_{\tX}$. Note that $\cV^p_X$ and $\cV^p_{\tX}$ are, in general, different Banach spaces; if $X = \tX$, we write $\|\cdot \,; \cdot\|_{X,p,[s,t]}$.
	
	\medskip
	
	Given $p \in (2,3)$, $\bX = (X,\X) \in \cD^p([0,T];\R^d)$ and $(Y,Y') \in \cV_X^p([0,T];\cL(\R^d;\R^k))$, the (forward) rough integral
	\begin{equation}\label{eq: rough path integral}
		\int_s^t Y_r \dd \bX_r :=  \lim_{|\mathcal{P}| \to 0} \sum_{[u,v] \in \mathcal{P}} (Y_u X_{u,v} + Y'_u \X_{u,v}), \qquad (s,t) \in \Delta_T,
	\end{equation}
	exists (in the classical mesh Riemann--Stieltjes sense), where the limit is taken along any sequence of partitions $(\mathcal{P}^n)_{n\in \N}$ of the interval $[s,t]$ such that $|\mathcal{P}^n| \to 0$ as $n\to \infty$. More precisely, in writing the product $Y_u X_{u,v}$, we apply the operator $Y_u \in \cL(\R^d;\R^k)$ onto $X_{u,v} \in \R^d$; and in writing the product $Y'_u \X_{u,v}$, we use the natural identification of $\cL(\R^d;\cL(\R^d;\R^k))$ with $\cL(\R^d \otimes \R^d;\R^k)$. The rough integral comes with the estimate
	\begin{equation*}
		\bigg|\int_s^t Y_r \dd \bX_r - Y_s X_{s,t} - Y'_s \X_{s,t}\bigg| \leq C \Big( \|R^Y\|_{\p,[s,t)} \|X\|_{p,[s,t]} + \|Y'\|_{p,[s,t)} \|\X\|_{\p,[s,t]} \Big)
	\end{equation*}
	for some constant $C$ depending only on $p$; see~\cite[Proposition~2.6]{Friz2018}, where
	\begin{equation*}
		\|Y'\|_{p,[s,t)} := \sup_{u < t} \|Y'\|_{p,[s,u]} \quad \text{and} \quad
		\|R^Y\|_{\p,[s,t)} := \sup_{u < t} \|R^Y\|_{\p,[s,u]}.
	\end{equation*}
	The estimate implies that $(\int_0^\cdot Y_r \dd \bX_r, Y) \in \cV_X^p([0,T];\R^k)$ is a controlled path with respect to $X$, see also~\cite[Remark~2.8]{Friz2018}.
	
	For details on the construction of the rough integral with respect to c{\`a}dl{\`a}g $p$-rough paths and its properties, we refer to~\cite{Friz2017, Friz2018}, and we provide some auxiliary estimates for the rough integral in Appendix~\ref{sec: appendix}.
	
	\medskip
	
	Let us now consider the rough functional differential equation (RFDE)
	\begin{equation}\label{eq: RFDE}
		Y_t = y_t + \int_0^t F_s(Y) \dd \bX_s, \quad t \in [0,T],
	\end{equation}
	where $\bX \in \cD^p([0,T];\R^d)$ is a c{\`a}dl{\`a}g $p$-rough path for $p \in (2,3)$, $(y,y') \in \cV^p_X([0,T];\R^k)$ is a given controlled path with respect to $X$ and further, where $(F,F') \colon \cV^p_X([0,T];\R^k) \to \cV^p_X([0,T];\cL(\R^d;\R^k))$ is a non-anticipative functional, i.e.
	\begin{itemize}
		\item[(i)] $(F_\cdot(Y),F'_\cdot(Y,Y')) \in \cV^p_X([0,T];\cL(\R^d;\R^k))$,
		\item[(ii)] $(F_t(Y),F'_t(Y,Y')) = (F_t(Y_{\cdot \wedge t}), F'_t(Y_{\cdot \wedge t}, Y'_{\cdot \wedge t}))$ for all $t \in [0,T]$,
	\end{itemize}
	for every $(Y,Y') \in \cV^p_X([0,T];\R^k)$. The integral in~\eqref{eq: RFDE} is defined as a (forward) rough integral, see~\eqref{eq: rough path integral} for its definition. Note that the RFDE~\eqref{eq: intro RFDE} can be re-written in the form of~\eqref{eq: RFDE}, using a standard time-extension of the driving rough path.
	
	\subsection{Existence and uniqueness}
	
	To prove the existence of a unique solution to the rough functional differential equation~\eqref{eq: RFDE}, we postulate a quadratic growth and a Lipschitz-type condition on the path-dependent coefficient~$(F,F')$, formulated on the associated path spaces. While a Lipschitz-type condition is expected, the quadratic growth condition appears to be natural in the presented context of (second order) controlled paths, which corresponds to a Taylor expansion with quadratic remainder term.
	
	\begin{assumption}\label{assumption: existence}
		Let $X\in D^p([0,T];\R^d)$ be given. For every $K > 0$, there exist constants $C_F > 0$, which depends on $p$ and the functional $F$, and $C_{F,K,X} > 0$, which additionally depends on $K$ and $X$, such that the map
		\begin{equation*}
			(F,F') \colon \cV^p_X([0,T];\R^k) \to \cV^p_X([0,T];\cL(\R^d;\R^k))
		\end{equation*}
		satisfy, for all $(Y,Y'), (\tY,\tY') \in \cV^p_X$, and every $0 \leq s < t \leq T$,  \\
		\textup{(i)} the growth conditions:
		\begin{align*}
			&|F_t(Y)| \leq C_F, \\
			&|F_{t-,t}(Y)| \leq C_F (1 + \|Y\|_{p,[s,t)} + |Y_{t-,t}|), \\
			&\|F(Y)\|_{p,[s,t]} \leq C_F (1 + (|Y'_s| + \|Y'\|_{p,[s,t]}) \|X\|_{p,[s,t]} + \|R^Y\|_{\p,[s,t]}),
			\quad\text{and}\\
			&\|F(Y),F'(Y,Y')\|_{X,p,[s,t]} \leq C_F (1 + \|Y,Y'\|_{X,p,[s,t]})^2 (1 + \|X\|_{p,[s,t]})^2;
		\end{align*}
		\textup{(ii)} the Lipschitz conditions:
		\begin{align*}
			&\|F(Y) - F(\tY)\|_{p,[s,t]} \leq C_{F,K,X} (|Y_s - \tY_s| + \|Y - \tY\|_{p,[s,t]}),
			\quad\text{and}\\
			&\|F(Y),F'(Y,Y');F(\tY),F'(\tY,\tY')\|_{X,p,[s,t]}\\
			&\quad \leq C_{F,K,X} (|Y_s - \tY_s| + \|Y,Y';\tY,\tY'\|_{X,p,[s,t]}),
		\end{align*}
		if $\|Y,Y'\|_{X,p,[s,t]}, \|\tY,\tY'\|_{X,p,[s,t]} \leq K$.
	\end{assumption}
	
	\begin{remark}
		The growth and Lipschitz conditions in Assumption~\ref{assumption: existence} are formulated in terms of both the $p$-variation of $Y$ and the controlled path norm $(Y,Y') \mapsto |Y_0| + \|Y,Y'\|_{X,p}$ on the space $\cV^p_X$ of controlled paths $(Y,Y^{'})$. To deduce the existence of a unique solution to the RFDE~\eqref{eq: RFDE} under a growth and Lipschitz conditions formulated only in terms of the controlled path norm seems to be far from being obvious. Moreover, notice that the common examples of RDEs with path-dependent coefficients do satisfy Assumption~\ref{assumption: existence}, see Section~\ref{sec: examples} below, demonstrating that Assumption~\ref{assumption: existence} is, indeed, a natural generalization of the conditions on the coefficients postulated in the existing literature.
	\end{remark}
	
	Based on Assumption~\ref{assumption: existence}, we obtain the following global existence and uniqueness result for rough functional differential equations.
	
	\begin{theorem}\label{thm: existence and uniqueness}
		Let $\bX \in \cD^p([0,T];\R^d)$ be a c{\`a}dl{\`a}g $p$-rough path for $p \in (2,3)$, and $(y,y') \in \cV^p_X([0,T];\R^k)$ be a given controlled path with respect to $X$. Suppose that the non-anticipative functional $(F,F') \colon \cV^p_X([0,T];\R^k) \to \cV^p_X([0,T];\cL(\R^d;\R^k))$ satisfies Assumption~\ref{assumption: existence} given~$X$. Then, there exists a unique solution to the rough functional differential equation~\eqref{eq: RFDE}, i.e. there exists a unique controlled path $(Y,Y') \in \cV^p_X([0,T];\R^k)$, with $Y' = y' + F(Y)$, such that
		\begin{equation*}
			Y_t = y_t + \int_0^t F_s(Y) \dd \bX_s, \qquad  t \in [0,T].
		\end{equation*}
		Moreover, there exists a componentwise non-decreasing function $K_p \colon [0, \infty)^3 \to [0,\infty)$ such that
		\begin{equation*}
			\|Y,Y'\|_{X,p} \leq K_p(\|y,y'\|_{X,p}, C_F , \|\bX\|_p).
		\end{equation*}
	\end{theorem}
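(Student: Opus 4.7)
The strategy is a Picard iteration in the space $\cV^p_X$, applying Banach's fixed point theorem to the map
\begin{equation*}
\mathcal{M}(Y,Y') := \Bigl(y_\cdot + \int_0^\cdot F_s(Y)\dd\bX_s,\; y' + F(Y)\Bigr),
\end{equation*}
and then extending the local solution to $[0,T]$ by concatenation along a greedy partition adapted to $\bX$. By Assumption~\ref{assumption} together with non-anticipativity, $(F(Y),F'(Y,Y'))$ is a controlled path, so the rough integration theorem (cf.~\cite{Friz2018} and Appendix~\ref{sec: appendix}) guarantees $\mathcal{M}(Y,Y') \in \cV^p_X$ with Gubinelli derivative $y' + F(Y)$; in particular, any fixed point of $\mathcal{M}$ solves~\eqref{eq: RFDE} with $Y' = y' + F(Y)$.

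For the local analysis on a subinterval $[a,b]$, I would combine the estimate for rough integrals with Assumption~\ref{assumption}(i) to obtain
\begin{equation*}
\bigl\|\mathcal{M}(Y,Y');(y,y')\bigr\|_{X,p,[a,b]} \lesssim C_F\bigl(1+\|Y,Y'\|_{X,p,[a,b]}\bigr)^2\bigl(1+\|\bX\|_{p,[a,b]}\bigr)^3,
\end{equation*}
together with the corresponding contraction estimate, which follows from Assumption~\ref{assumption}(ii) and the stability bound for rough integrals,
\begin{equation*}
\bigl\|\mathcal{M}(Y,Y');\mathcal{M}(\tY,\tY')\bigr\|_{X,p,[a,b]} \leq C_{F,K}\,\varphi\bigl(\|\bX\|_{p,[a,b)}\bigr)\bigl\|Y,Y';\tY,\tY'\bigr\|_{X,p,[a,b]},
\end{equation*}
for some continuous $\varphi$ with $\varphi(0)=0$, valid whenever $\|Y,Y'\|_{X,p,[a,b]}, \|\tY,\tY'\|_{X,p,[a,b]} \leq K$. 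Choosing $K$ large relative to the initial data on $[a,b]$ and then $\|\bX\|_{p,[a,b)}$ small enough, $\mathcal{M}$ becomes a self-map of the $K$-ball and a strict contraction, and Banach's theorem yields a unique local fixed point.

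The main obstacle is propagating this to all of $[0,T]$ in the càdlàg setting: jumps of $\bX$ do not disappear by shrinking the interval, so $\|\bX\|_{p,[a,b)}$ cannot be made arbitrarily small simply by taking $b$ close to $a$. The remedy is the left-continuity of $s \mapsto \|\bX\|_{p,[a,s)}$ (a consequence of Chen's relation and the superadditivity of the underlying control), which permits the construction of a greedy partition $0 = \tau_0 < \tau_1 < \cdots < \tau_N = T$ with $\|\bX\|_{p,[\tau_i,\tau_{i+1})}$ below a preset threshold $\epsilon$ chosen as above. On each half-open piece $[\tau_i,\tau_{i+1})$ the fixed point argument applies, and the value at the right endpoint is dictated by the rough-integral jump relation $Y_{\tau_{i+1}} - Y_{\tau_{i+1}-} = F_{\tau_{i+1}-}(Y)\Delta X_{\tau_{i+1}} + F'_{\tau_{i+1}-}(Y,Y')\Delta \X_{\tau_{i+1}}$; non-anticipativity of $(F,F')$ ensures that the coefficients on the next piece depend only on the already-constructed past, so the iteration is well-defined and gives a unique global solution.

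The quantitative bound then follows from the same partition. The local estimate gives a one-step bound $\|Y,Y'\|_{X,p,[\tau_i,\tau_{i+1}]} \leq \Psi(|F_{\tau_i}(Y)|,|Y_{\tau_i}|,C_F)$ with $\Psi$ componentwise non-decreasing, while the number $N$ of partition intervals is bounded by a function of $\|\bX\|_p$ and $\epsilon$. Summing with the standard inequality $\|Y,Y'\|_{X,p,[0,T]}^p \lesssim N^{p-1}\sum_i \|Y,Y'\|_{X,p,[\tau_i,\tau_{i+1}]}^p$ and iterating the one-step bound along the partition (propagating control of $|Y_{\tau_i}|$ and $|F_{\tau_i}(Y)|$ via the growth condition) yields the desired componentwise non-decreasing function $K_p(|F_0(y)|,\|y,y'\|_{X,p},C_F,\|\bX\|_p)$.
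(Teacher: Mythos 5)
Your overall strategy---Banach's fixed point theorem for a local solution, a greedy partition adapted to the control of $\bX$ (so that the $p$-variation on half-open pieces is uniformly small), pasting via the jump relation, and a global a priori estimate by iteration along the partition---is exactly the paper's. However, two of your stated quantitative bounds fail as written and would break the local fixed-point argument.

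First, the invariance estimate. You claim
$\|\mathcal{M}(Y,Y');(y,y')\|_{X,p,[a,b]}\lesssim C_F(1+\|Y,Y'\|_{X,p,[a,b]})^2(1+\|\bX\|_{p,[a,b]})^3$,
and then argue that choosing $K$ large and the interval short makes $\mathcal{M}$ a self-map of the $K$-ball. This cannot work: as $\|\bX\|_{p,[a,b)}\to 0$ the right-hand side only decreases to $C_F(1+K)^2$, which exceeds $K$ once $K$ is large. What actually happens is that the two components of the controlled norm scale very differently on a short interval. The derivative component of $\mathcal{M}(Y,Y')-(y,y')$ is $F(Y)$, whose $p$-variation is bounded by $C_F(1+\|Y\|_{p})$; this has a fixed $O(C_F)$ contribution that does not vanish as the interval shrinks. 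The remainder component carries an extra factor $\|\bX\|_{p,[a,b)}$ (see Lemma~\ref{lemma: estimate rough integral}) and therefore \emph{does} vanish. A ball with a single radius $K$ cannot accommodate both scales; one must use an anisotropic ball, as in the paper's $\mathcal{B}_t$, with radius $\sim C_F$ in the derivative direction and $\sim 1$ in the remainder direction.

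Second, the contraction estimate. You assert
$\|\mathcal{M}(Y,Y');\mathcal{M}(\tY,\tY')\|_{X,p,[a,b]}\leq C_{F,K}\,\varphi(\|\bX\|_{p,[a,b)})\,\|Y,Y';\tY,\tY'\|_{X,p,[a,b]}$
with $\varphi(0)=0$. This is false in the plain controlled-path norm: estimating the Gubinelli derivative $F(Y)-F(\tY)$ via the Lipschitz assumption produces a term proportional to $\|R^Y-R^\tY\|_{\p,[a,b]}$ that is \emph{not} multiplied by any small factor, so the contraction factor does not tend to zero as the interval shrinks. The paper fixes this by working with the equivalent norm $\|Y'\|_{p}+\delta\|R^Y\|_{\p}$ for a large parameter $\delta$, which rebalances the two components so that after shrinking the interval one obtains a strict contraction. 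Without the $\delta$-weighting (or an equivalent device, such as starting the Picard iteration from the center of the ball and exploiting geometric decay of increments), your local fixed-point step does not close. Once these two points are repaired, your global argument and a priori bound proceed as you describe and match Steps~2 and~3 of the paper.
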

	
	The proof relies on a fixed point approach using Banach's fixed point theorem.
	
	\begin{proof}
		\emph{Step~1: Local solution.} We may assume that
		\begin{equation*}
			\|\bX\|_p \leq 1 \quad \text{and} \quad \|y'\|_p + \|R^{y}\|_\p \leq 1.
		\end{equation*}
		For $t \in (0,T]$, we define the map $\mathcal{M}_t \colon \cV^p_X([0,t];\R^k) \to \cV^p_X([0,t];\R^k)$ by
		\begin{equation*}
			(Y,Y') \mapsto (Z,Z') := \mathcal{M}_t(Y,Y') := \bigg( y_{\cdot} + \int_0^{\cdot} F_s(Y) \dd \bX_s, y'_{\cdot} + F_{\cdot} (Y) \bigg),
		\end{equation*}
		noting that $\mathcal{M}_t(Y,Y')$ is a controlled path with respect to $X$ as $\cV^p_X$ is a Banach space, and introduce the subset of controlled paths
		\begin{equation*}
			\mathcal{B}_t := \bigg\{ (Y,Y') \in \cV^p_X([0,t];\R^k) \, : \,
			\begin{array}{l}
				(Y_0,Y_0') = (y_0, y'_0 + F_0(y)), \\ \|(Y - y)'\|_{p,[0,t]} \leq 4 C_F, \|R^Y - R^y\|_{\p,[0,t]} \leq 1
			\end{array} \bigg\},
		\end{equation*}
		which is a complete set as a closed subset of $\cV^p_X([0,t];\R^k)$, cf.~\cite[Section 3.2]{Friz2018}.
		
		\emph{Invariance.} For any $(Y,Y') \in \mathcal{B}_t$, we have that
		\begin{align*}
			&\|(Z-y)'\|_{p,[0,t]} = \|F(Y)\|_{p,[0,t]} \\
			&\quad \leq C_F (1 + (|Y'_0| + \|Y'\|_{p,[0,t]}) \|X\|_{p,[0,t]} + \|R^Y\|_{\p,[0,t]}) \\
			&\quad \leq C_F + C_F(|Y'_0| + \|Y'\|_{p,[0,t]}) \|X\|_{p,[0,t]} + C_F \|R^{Y-y}\|_{\p,[0,t]} + C_F \|R^y\|_{\p,[0,t]} \\
			&\quad \leq C_F(1 + |Y'_0| + \|Y'\|_{p,[0,t]}) \|X\|_{p,[0,t]} + 3 C_F,
		\end{align*}
		since $(F,F')$ satisfies Assumption~\ref{assumption: existence}~(i), and by the local estimate for rough integration, see Lemma~\ref{lemma: estimate rough integral},
		\begin{align*}
			&\|R^Z - R^y\|_{\p,[0,t]} = \|R^{\int_0^\cdot F(Y) \d \bX}\|_{\p,[0,t]} \\
			&\quad \lesssim C_F (1 + \|Y,Y'\|_{X,p,[0,t]})^2 (1 + \|X\|_{p,[0,t]})^2 \|\bX\|_{p,[0,t]},
		\end{align*}
		where the implicit multiplicative constant depends only on $p$.
		Hence, for $t = t_1$ sufficiently small we obtain that $\mathcal{B}_{t_1}$ is invariant under $\mathcal{M}_{t_1}$. Note that $t_1$ depends on $p$, $|y'_0|$, $C_F$ and $\|\bX\|_p$.
		
		\emph{Contraction.} Let $(Y,Y'), (\tY,\tY') \in \mathcal{B}_t$ for some $t \in (0,t_1]$, that is, setting $K:= 5 (1 + \|y,y'\|_{X,p} + C_F)$, it holds that $\|Y,Y'\|_{X,p,[0,t]}, \|\tY,\tY'\|_{X,p,[0,t]} \leq K$. We have that
		\begin{align*}
			&\|Z' - \widetilde{Z}'\|_{p,[0,t]} = \|F(Y) - F(\tY)\|_{p,[0,t]} \\
			&\quad \leq C_{F,K,X} \|Y - \tY\|_{p,[0,t]} \\
			&\quad \lesssim_p C_{F,K,X} (\|Y' - \tY'\|_{p,[0,t]} \|X\|_{p,[0,t]} + \|R^Y - R^{\tY}\|_{\p,[0,t]}).
		\end{align*}
		Further, due to Assumption~\ref{assumption: existence}~(ii) and Lemma~\ref{lemma: Lipschitz estimate rough integral}, it holds that
		\begin{align*}
			&\|R^Z - R^{\widetilde{Z}}\|_{\p,[0,t]} = \|R^{\int_0^\cdot F(Y) \d \bX - \int_0^\cdot F(\tY) \d \bX}\|_{\p,[0,t]} \\
			&\quad \lesssim C_{F,K,X} \|Y,Y';\tY,\tY'\|_{X,p,[0,t]} \|\bX\|_{p,[0,t]},
		\end{align*}
		where the implicit multiplicative constant depends on $p$ and $\|\bX\|_p$. Defining an equivalent norm on $\cV^p_X$ by
		\begin{equation*}
			\|Y,Y'\|^{(\delta)}_{X,p,[0,t]} := |Y'_0| + \|Y'\|_{p,[0,t]} + \delta \|R^Y\|_{\p,[0,t]}, \quad \text{for } \delta \geq 1,
		\end{equation*}
		we then deduce that
		\begin{align*}
			&\|Z -\widetilde{Z}, Z' - \widetilde{Z}'\|^{(\delta)}_{X,p,[0,t]} \\
			&\quad \lesssim C_{F,K,X} (\|Y' - \tY'\|_{p,[0,t]} \|X\|_{p,[0,t]} + \|R^Y - R^{\tY}\|_{\p,[0,t]}) \\
			&\qquad + \delta C_{F,K,X} (\|Y' - \tY'\|_{p,[0,t]} + \|R^Y - R^{\tY}\|_{\p,[0,t]}) \|\bX\|_{p,[0,t]} \\
			&\quad \lesssim C_{F,K,X} (1 + \delta) \|\bX\|_{p,[0,t]} \|Y' - \tY'\|_{p,[0,t]} + C_{F,K,X} (1 + \delta \|\bX\|_{p,[0,t]}) \|R^Y - R^{\tY}\|_{\p,[0,t]} \\
			&\quad \lesssim C_{F,K,X} \bigg( (1 + \delta) \|\bX\|_{p,[0,t]} \vee \frac{1 + \delta \|\bX\|_{p,[0,t]}}{\delta} \bigg) \|Y - \tY, Y' - \tY'\|^{(\delta)}_{X,p,[0,t]},
		\end{align*}
		where the implicit multiplicative constant depends on $p$ and $\|\bX\|_p$. Hence, we can choose $\delta$ sufficiently large and $t = t_2 \leq t_1$ sufficiently small such that
		\begin{equation*}
			C_{F,K,X} \bigg( (1 + \delta) \|\bX\|_{p,[0,t_2]} \vee \frac{1 + \delta \|\bX\|_{p,[0,t_2]}}{\delta} \bigg) \leq 1,
		\end{equation*}
		where the left-hand side is up to a multiplicative constant which depends on $p$ and $\|\bX\|_p$.
		It follows that $\mathcal{M}_{t_2}$ is a contraction on the subset of controlled paths $(\mathcal{B}_{t_2},\|\cdot\|^{(\delta)}_{X,p,[0,t_2]})$. Hence, by Banach's fixed point theorem, there exists a unique fixed point of the map~$\mathcal{M}_{t_2}$, which is the unique solution of the RFDE~\eqref{eq: RFDE} over the time interval $[0,t_2]$.
		
		\emph{Step~2: Global solution.} Let $w \colon \Delta_T \to [0,\infty)$ be the right-continuous control function given by
		\begin{equation*}
			w(s,t) := \|X\|^p_{p,[s,t]} + \|\X\|^\p_{\p,[s,t]}, \quad (s,t) \in \Delta_T.
		\end{equation*}
		We infer from Step~1 that there exists a constant $\gamma > 0$, which depends on $p$, $\|y,y'\|_{X,p}$, $C_F$, $C_{F,K,X}$ and $\|\bX\|_p$, such that the local solution $(Y,Y')$ established above exists on any interval $[s,t]$ such that $w(s,t) \leq \gamma$, given any initial condition $\xi \in \cV^p_X$ with $|\xi'_s| \leq \|y,y'\|_{X,p}$.
		
		By~\cite[Lemma~1.5]{Friz2018}, there exists a partition $\mathcal{P} = \{0 = \tau_0 < \tau_1 < \dots < \tau_N = T\}$ of $[0,T]$, such that $w(\tau_i,\tau_{i+1}-) < \gamma$ for every $i = 0, 1, \ldots, N-1$. We can then define the solution $(Y,Y')$ on each of the half-intervals $[\tau_i,\tau_{i+1})$. Given the solutions on $[\tau_i,\tau_{i+1})$, the values $Y_{\tau_{i+1}}$ at the right end-point of the interval are uniquely determined by the jumps of $\bX$ at time $\tau_{i+1}$. More precisely, let $y_{0;\cdot} = y_\cdot$, and define $y_i$, $i = 1, \ldots, N-1$, by
		\begin{equation*}
			y_{i;t} = y_t + Y_{\tau_i-} - y_{\tau_i-} + F_{\tau_i-}(Y) X_{\tau_i-,\tau_i} + F'_{\tau_i-}(Y,Y') \X_{\tau_i-,\tau_i}, \quad t \in [\tau_i,\tau_{i+1}).
		\end{equation*}
		We note that $|y'_{i,\tau_i}| = |y'_i| \leq \|y,y'\|_{X,p}$. Given the initial condition $(y_i,y'_i) \in \cV^p_X$, we obtain the solution $(Y,Y')$ on $[\tau_i,\tau_{i+1})$, $i = 0, 1, \ldots, N-1$. By pasting the solutions on each of these subintervals together, with $Y_T = y_{N;T}$, we obtain a unique global solution $(Y,Y')$ of the RFDE~\eqref{eq: RFDE} on the interval~$[0,T]$.
		
		\emph{Step~3: A priori estimate.} It remains to show the existence of a componentwise non-decreasing function $K_p \colon [0, \infty)^3 \to [0,\infty)$ such that
		\begin{equation*}
			\|Y,Y'\|_{X,p} \leq K_p(\|y,y'\|_{X,p}, C_F, \|\bX\|_p).
		\end{equation*}
		Analogously to Step~2, we can obtain a partition $\mathcal{P} = \{0 = \tau_0 < \tau_1 < \dots < \tau_N = T\}$ and define the solution $(Y,Y')$ on each of the half-intervals $[\tau_i,\tau_{i+1})$, $i = 0, 1, \dots, N-1$. We recall the definition of $\mathcal{B}_t$ and note that the defining estimates also hold in terms of $y$ since the $p$-variation is invariant under additive shifts, thus, $\|y_i\|_{p,[\tau_i,\tau_{i+1})} = \|y\|_{p,[\tau_i,\tau_{i+1})}$ and $\|R^{y_i}\|_{\p,[\tau_i,\tau_{i+1})} = \|R^{y}\|_{\p,[\tau_i,\tau_{i+1})}$. It therefore holds that
		\begin{equation}\label{eq: ap-1}
			\|Y'\|_{p,[\tau_i,\tau_{i+1})} \leq 4 C_F + \|y'\|_{p,[\tau_i,\tau_{i+1})}
		\end{equation}
		as well as
		\begin{equation}\label{eq: ap-2}
			\|R^Y\|_{\p,[\tau_i,\tau_{i+1})} \leq  1+ \|R^y\|_{\p,[\tau_i,\tau_{i+1})}
		\end{equation}
		for all $i = 0, \ldots, N-1$. Here, $N$ depends on $p$, $\|y,y'\|_{X,p}$, $C_F$, $C_{F,K,X}$, $\|\bX\|_p$ and is, for $p$ fixed, non-decreasing in the other variables. Observe that
		\begin{equation*}
			Y_{t-,t} = y_{t-,t} + \bigg( \int_0^\cdot F_s(Y) \dd \bX_s \bigg)_{t-,t} = y_{t-,t} + F_{t-}(Y) X_{t-,t} + F'_{t-}(Y,Y') \X_{t-,t},
		\end{equation*}
		for any $t \in (0,T]$, so we have
		\begin{equation*}
			R^Y_{t-,t} = R^y_{t-,t} + F'_{t-}(Y,Y') \X_{t-,t}.
		\end{equation*}
		This yields
		\begin{equation*}
			|R^Y_{\tau_{i+1}-,\tau_{i+1}}| \leq \|R^y\|_{p,[\tau_i,\tau_{i+1}]} + (|F'_{\tau_i}(Y,Y')|+ \|F'(Y,Y')\|_{p,[\tau_i,\tau_{i+1})}) |\X_{\tau_{i+1}-,\tau_{i+1}}|.
		\end{equation*}
		Now, we use Assumption~\ref{assumption: existence}~(i), i.e.
		\begin{equation*}
			|F'_{\tau_i}(Y,Y')| + \|F'(Y,Y')\|_{p,[\tau_i,\tau_{i+1})} \leq C_F (1 +\|Y,Y'\|_{X,p,[\tau_i,\tau_{i+1})})^2 (1 + \|X\|_{p,[\tau_i,\tau_{i+1})})^2.
		\end{equation*}
		Since
		\begin{equation*}
			\|Y,Y'\|_{X,p,[\tau_i,\tau_{i+1})} \leq |y'_{\tau_i}| + |F_{\tau_i}(Y)| + \| Y'\|_{p,[\tau_i,\tau_{i+1})} + \|R^Y\|_{\p,[\tau_i,\tau_{i+1})},
		\end{equation*}
		It follows from Assumption~\ref{assumption: existence}~(i), \eqref{eq: ap-1} and~\eqref{eq: ap-2} that
		\begin{equation*}
			\|Y,Y'\|_{X,p,[\tau_i,\tau_{i+1})} \leq |y'_{\tau_i}| + 5 (1 + C_F) + \|y'\|_{p,[\tau_i,\tau_{i+1})} + \| R^y\|_{\p,[\tau_i,\tau_{i+1})}.
		\end{equation*}
		Consequently, there exists a componentwise non-decreasing polynomial $Q_p^{(R)} \colon [0,\infty)^3 \to [0,\infty)$
		such that
		\begin{equation*}
			|R^Y_{\tau_{i+1}-,\tau_{i+1}}| \leq Q_p^{(R)}(\|y,y'\|_{X,p,[\tau_i,\tau_{i+1}]}, C_F, \|\bX\|_{p,[\tau_i,\tau_{i+1}]})
		\end{equation*}
		as well as
		\begin{equation*}
			\|R^Y\|_{\p,[\tau_i,\tau_{i+1}]} \leq 1 + \|R^y\|_{\p,[\tau_i,\tau_{i+1}]} + Q_p^{(R)}(\|y,y'\|_{X,p,[\tau_i,\tau_{i+1}]}, C_F, \|\bX\|_{p,[\tau_i,\tau_{i+1}]})
		\end{equation*}
		for all $i = 0, \ldots, N-1$. Moreover, since
		\begin{equation*}
			Y'_{t-,t} = y'_{t-,t} + F_{t-,t}(Y),
		\end{equation*}
		for any $t \in (0,T]$, we have
		\begin{equation*}
			|Y'_{\tau_{i+1}-,\tau_{i+1}}| \leq \|y'\|_{p,[\tau_i,\tau_{i+1}]} + |F_{\tau_{i+1}-,\tau_{i+1}}(Y)|.
		\end{equation*}
		By Assumption~\ref{assumption: existence}~(i), it holds that
		\begin{equation*}
			|F_{\tau_{i+1}-,\tau_{i+1}}(Y)| \leq C_F (1 + \|Y\|_{p,[\tau_i,\tau_{i+1})} + |Y_{\tau_{i+1}-,\tau_{i+1}}|),
		\end{equation*}
		thus, we need to control the jump of $Y$ at $\tau_{i+1}$. For this, note that
		\begin{align*}
			&|Y_{\tau_{i+1}-,\tau_{i+1}}| \\
			&\quad \leq   |y_{\tau_{i+1}-,\tau_{i+1}}| + |F_{\tau_{i+1}-}(Y)||X_{\tau_{i+1}-,\tau_{i+1}}| + |F'_{\tau_{i+1}-}(Y)| |\X_{\tau_{i+1}-,\tau_{i+1}}| \\
			&\quad \leq  |y_{\tau_{i+1}-,\tau_{i+1}}| +  (|F_{\tau_i}(Y)| + \|F(Y)\|_{p,[\tau_i,\tau_{i+1})}) |X_{\tau_{i+1}-,\tau_{i+1}}| \\
			&\qquad + (|F'_{\tau_i}(Y,Y')| + \|F'(Y,Y')\|_{p,[\tau_i,\tau_{i+1})}) |\X_{\tau_{i+1}-,\tau_{i+1}}| \\
			&\quad \leq \|y,y'\|_{p,[\tau_i,\tau_{i+1}]} + (C_F + C_F(1 + (|Y'_{\tau_i}| + \|Y'\|_{\p,[\tau_i,\tau_{i+1})}) \|X\|_{p,[\tau_i,\tau_{i+1}]} \\
			&\qquad + \|R^Y\|_{\p,[\tau_i,\tau_{i+1})} )\|X\|_{p,[\tau_i,\tau_{i+1}]} \\
			&\qquad + C_F(1 + \|Y,Y'\|_{p,[\tau_i,\tau_{i+1})})^2 (1 + \|X\|_{p,[\tau_i,\tau_{i+1}]})^2 \|\X\|_{\p,[\tau_i,\tau_{i+1}]}.
		\end{align*}
		Using~\eqref{eq: ap-1} and~\eqref{eq: ap-2}, we can now conclude that there exist componentwise non-decreasing polynomials $Q_p^{(Y,J)}, Q_p^{(Y')} \colon [0,\infty)^3 \to [0,\infty)$ such that
		\begin{equation*}
			|F_{\tau_{i+1}-,\tau_{i+1}}(Y)| \leq Q_p^{(Y,J)}(\|y,y'\|_{X,p,[\tau_i,\tau_{i+1}]}, C_F, \|\bX\|_{p,[\tau_i,\tau_{i+1}]})
		\end{equation*}
		as well as
		\begin{equation*}
			\| Y'\|_{p,[\tau_i,\tau_{i+1}]} \leq Q_p^{(Y')}(\|y,y'\|_{X,p,[\tau_i,\tau_{i+1}]}, C_F, \|\bX\|_{p,[\tau_i,\tau_{i+1}]}).
		\end{equation*}
		Combining these estimates, we obtain that
		\begin{equation*}
			|Y_0'| +	\|Y'\|_p	+ \|R^Y\|_{\p} \leq K_p(\|y,y'\|_{X,p}, C_F, \|\bX\|_p),
		\end{equation*}
		which is the assertion.
	\end{proof}
	
	\subsection{Continuity of the It{\^o}--Lyons map}
	
	A fundamental contribution of the theory of rough paths is the insight that the solution map, mapping the input (initial condition, coefficients, driving rough path, \dots) of a rough differential equation to its solution, is continuous with respect to suitable distances on the spaces of controlled paths as well as of rough paths, see e.g.~\cite{Friz2020}. In the context of rough differential equations, this solution map is also known as It{\^o}--Lyons map. In the next theorem we present the local Lipschitz continuity of the It{\^o}--Lyons map for rough functional differential equations, based on the following assumption.
	
	\begin{assumption}\label{assumption: Lipschitz}
		Let $X, \tX \in D^p([0,T];\R^d)$ be given. For $(G,G')\in \{(F,F'),( \tF,\tF')\}$ and $Z\in \{X,\tX\}$ we have: For every $K > 0$, there exist constants $C_G > 0$, which depends on $p$ and the functional $G$, and $C_{G,K,X,\widetilde{X}} > 0$, which additionally depends on $K$, $X$, $\widetilde{X}$ such that the maps
		\begin{equation*}
			(G,G')\colon \cV^p_Z([0,T];\R^k) \to \cV^p_Z([0,T];\cL(\R^d;\R^k))
		\end{equation*}
		satisfy, for all $(Y,Y') \in \cV^p_X$, $(\tY,\tY') \in \cV^p_{\tX}$, and every $0 \leq s < t \leq T$,  \\
		\textup{(i)} the growth conditions:
		\begin{align*}
			&|G_t(Y)| \leq C_G, \\
			&|G_{t-,t}(Y)| \leq C_G (1 + \|Y\|_{p,[s,t)} + |Y_{t-,t}|), \\
			&\|G(Y)\|_{p,[s,t]} \leq C_G (1 + (|Y'_s| + \|Y'\|_{p,[s,t]}) \|Z\|_{p,[s,t]} + \|R^Y\|_{\p,[s,t]}),
			\quad\text{and}\\
			&\|G(Y),G'(Y,Y')\|_{Z,p,[s,t]} \leq C_G (1 + \|Y,Y'\|_{Z,p,[s,t]})^2 (1 + \|Z\|_{p,[s,t]})^2;
		\end{align*}
		\textup{(ii)} the Lipschitz conditions:
		\begin{align*}
			&\|G(Y) - G(\tY)\|_{p,[s,t]} \leq C_{F,K,X,\widetilde{X}} (|Y_s - \tY_s| + \|Y - \tY\|_{p,[s,t]}),
			\quad\text{and}\\
			&\|G(Y),G'(Y,Y');G(\tY),G'(\tY,\tY')\|_{X,\tX,p,[s,t]}\\
			&\quad \leq C_{G,K,X,\widetilde{X}} (|Y_s - \tY_s| + \|Y,Y';\tY,\tY'\|_{X,\tX,p,[s,t]} + \|X - \tX\|_{p,[s,t]}),
		\end{align*}
		if $\|Y,Y'\|_{X,p,[s,t]}, \|\tY,\tY'\|_{\tX,p,[s,t]} \leq K$.
		
		Moreover, there exists a constant $C_{F - \tF}>0$, which depends on $p$ and the functionals $F-\tF$, such that
		\begin{align*}
			&|(F-\tF)_t(Y)| \leq C_{F - \tF}, \\
			&|(F-\tF)_{t-,t}(Y)| \leq C_{F - \tF} (1 + \|Y\|_{p,[s,t)} + |Y_{t-,t}|), \\
			&\|(F-\tF)(Y)\|_{p,[s,t]} \leq C_{F - \tF} (1 + (|Y'_s| + \|Y'\|_{p,[s,t]}) \|X\|_{p,[s,t]} + \|R^Y\|_{\p,[s,t]}),
			\quad\text{and}\\
			&\|(F-\tF)(Y),(F'-\tF')(Y,Y')\|_{X,p,[s,t]} \leq C_{F - \tF} (1 + \|Y,Y'\|_{X,p,[s,t]})^2 (1 + \|X\|_{p,[s,t]})^2.
		\end{align*}
	\end{assumption}
	
	\begin{theorem}\label{thm: continuity}
		Let $\bX, \tbX \in \cD^p([0,T];\R^d)$ be c{\`a}dl{\`a}g $p$-rough paths for $p \in (2,3)$, $(y,y') \in \cV^p_X([0,T];\R^k)$, $(\ty,\ty') \in \cV^p_{\tX}([0,T];\R^k)$ be given controlled paths with respect to $X$ and $\tX$, respectively. Suppose that the non-anticipative functionals $(F,F')$, $(\tF,\tF')$ satisfy Assumption~\ref{assumption: Lipschitz} given $X,\tX$.
		
		Let $(Y,Y) \in \cV^p_X([0,T];\R^k)$ be the solution given by Theorem~\ref{thm: existence and uniqueness} to the rough functional differential equation~\eqref{eq: RFDE}, and $(\tY,\tY') \in \cV^p_{\tX}([0,T];\R^k)$ be the solution to the rough functional differential equation~\eqref{eq: RFDE} driven by $\tbX$ with initial condition $(\ty,\ty')$ and functional $(\tF,\tF')$, and suppose that $\|Y,Y'\|_{X,p}, \|\tY,\tY'\|_{\tX,p} \leq K$, for some $K > 0$. Then, we have the estimate
		\begin{align*}
			&|Y_0 - \tY_0| + \|Y,Y';\tY,\tY'\|_{X,\tX,p} \\
			&\quad \lesssim |y_0 - \ty_0| + |F_0(y) - \tF_0(\ty)| + \|y,y';\ty,\ty'\|_{X,\tX,p} + C_{F-\tF} + \|\bX;\tbX\|_p,
		\end{align*}
		where the implicit multiplicative constant depends on $p$, $C_F \vee C_{\tF}$, $\C \vee C_{\tF,K,X,\tX}$, $K$, $\|\bX\|_p$ and $\|\tbX\|_p$.
	\end{theorem}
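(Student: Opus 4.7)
The plan is to mirror the two-step architecture of the proof of Theorem~\ref{thm: existence and uniqueness}: first obtain a local Lipschitz estimate on a sufficiently small subinterval, and then iterate across a partition of $[0,T]$ furnished by the control function $w(s,t) := \|\bX\|_{p,[s,t]}^p + \|\tbX\|_{p,[s,t]}^p$ together with~\cite[Lemma~1.5]{Friz2018}. Since $Y_0 = y_0$ and $\tY_0 = \ty_0$, the bound on $|Y_0 - \tY_0|$ is trivial, and the whole game is to control the three pieces
\begin{equation*}
    |Y'_0 - \tY'_0| + \|Y' - \tY'\|_{p,[0,T]} + \|R^Y - R^\tY\|_{\p,[0,T]},
\end{equation*}
using the relations $Y' = y' + F(Y)$, $\tY' = \ty' + \tF(\tY)$.

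For the local estimate on a subinterval $[s,t]$, I split the difference of derivatives as
\begin{equation*}
    Y' - \tY' = (y' - \ty') + (F(Y) - F(\tY)) + (F(\tY) - \tF(\tY)),
\end{equation*}
controlling the second summand by Assumption~\ref{assumption}(ii) (using the a priori bound $K$) and the third by Assumption~\ref{assumption}(i) applied to $F - \tF$ with constant $C_{F - \tF}$. For the remainder, I exploit the identity
\begin{equation*}
    R^Y_{s,t} - R^\tY_{s,t} = (R^y_{s,t} - R^\ty_{s,t}) + \bigl(F'_s(Y,Y') \X_{s,t} - \tF'_s(\tY,\tY') \tbbX_{s,t}\bigr) + \bigl(\rho^Y_{s,t} - \rho^\tY_{s,t}\bigr),
\end{equation*}
where $\rho^Y_{s,t} := \int_s^t F_r(Y) \dd \bX_r - F_s(Y) X_{s,t} - F'_s(Y,Y') \X_{s,t}$ is the rough-integral remainder. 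Writing
\begin{equation*}
    \int F(Y) \dd \bX - \int \tF(\tY) \dd \tbX = \int \bigl(F(Y) - \tF(\tY)\bigr) \dd \bX + \int \tF(\tY) \dd(\bX - \tbX),
\end{equation*}
the Lipschitz estimate for rough integrals (Lemma~\ref{lemma: Lipschitz estimate rough integral} in Appendix~\ref{sec: appendix}) applied to both pieces, combined with Assumption~\ref{assumption}(ii), yields on $[s,t]$ an inequality of the form
\begin{equation*}
    \|Y,Y';\tY,\tY'\|_{X,\tX,p,[s,t]} \leq A_{s,t} + \eta(s,t)\,\|Y,Y';\tY,\tY'\|_{X,\tX,p,[s,t]},
\end{equation*}
where $A_{s,t}$ aggregates the initial-data differences, $C_{F-\tF}$ and $\|\bX;\tbX\|_{p,[s,t]}$ (plus $|Y_s - \tY_s|$ contributed from the left endpoint), and $\eta(s,t) \to 0$ as $w(s,t) \to 0$. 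Introducing the weighted norm $\|\cdot\|^{(\delta)}$ from Step~1 of the existence proof allows me to cleanly separate the $\|Y'-\tY'\|_p$ and $\|R^Y - R^\tY\|_{\p}$ contributions so that $\eta$ becomes genuinely small.

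For the global estimate, I select a partition $0 = t_0 < t_1 < \cdots < t_N = T$ with $\eta(t_i, t_{i+1}-) \leq 1/2$ on each $[t_i, t_{i+1})$ and absorb the $\eta$-term to get a clean local bound. Propagation across the partition points is handled through the explicit rough-integral jump formula
\begin{equation*}
    Y_{t_{i+1}-, t_{i+1}} = y_{t_{i+1}-, t_{i+1}} + F_{t_{i+1}-}(Y)\, X_{t_{i+1}-, t_{i+1}} + F'_{t_{i+1}-}(Y,Y')\, \X_{t_{i+1}-, t_{i+1}},
\end{equation*}
which, together with the analogous identity for $\tY$, expresses $|Y_{t_{i+1}} - \tY_{t_{i+1}}|$ in terms of the quantities already controlled on $[t_i, t_{i+1})$, $C_{F - \tF}$ and $\|\bX; \tbX\|_p$. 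Iterating over the finitely many subintervals (with $N$ depending only on the quantities listed) and using superadditivity of $p$-variation gives the claimed estimate. The main technical obstacle is the absorption step: one must carefully isolate the prefactors that vanish with $w(s,t)$ from those that merely stay bounded in terms of $K$, $C_F \vee C_{\tF}$, $\|\bX\|_p$ and $\|\tbX\|_p$; the weighted-norm trick of the existence proof is the cleanest way to achieve this.
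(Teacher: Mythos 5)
Your overall architecture — local Lipschitz estimate, absorption of a small-prefactor term, partition from \cite[Lemma~1.5]{Friz2018}, jump control at the partition points via the explicit jump formula, iteration — mirrors the paper's proof closely, and Step~2 as you outline it would carry through. There is, however, a genuine problem in your Step~1 at the point where you decompose the rough-integral difference. The split
\begin{equation*}
    \int F(Y) \dd \bX - \int \tF(\tY) \dd \tbX = \int \bigl(F(Y) - \tF(\tY)\bigr) \dd \bX + \int \tF(\tY) \dd(\bX - \tbX)
\end{equation*}
does not make sense at the level of rough integrals. The integrand $\tF(\tY)$ lies in $\cV^p_\tX$, not in $\cV^p_X$, so $\int \tF(\tY) \dd \bX$ is not a well-defined rough integral; consequently neither is the shorthand $\int \tF(\tY) \dd(\bX - \tbX)$ (and $\bX - \tbX$ is not itself a rough path, since Chen's relation fails for the difference). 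Likewise, $F(Y) - \tF(\tY)$ mixes a path controlled by $X$ with one controlled by $\tX$, so the first piece has the same defect. Applying Lemma~\ref{lemma: Lipschitz estimate rough integral} ``to both pieces'' therefore cannot work: that lemma is designed precisely to avoid such a split, estimating the single object $\|R^{\int F(Y) \d \bX} - R^{\int F(\tY) \d \tbX}\|_{\p}$ directly, with the two integrals each individually well-defined against their own rough paths.

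The repair is to use the paper's decomposition instead:
\begin{equation*}
    \|R^{\int F(Y) \d \bX - \int \tF(\tY) \d \tbX}\|_{\p,[0,t]} \leq \|R^{\int F(Y) \d \bX - \int F(\tY) \d \tbX}\|_{\p,[0,t]} + \|R^{\int (F-\tF)(\tY) \d \tbX}\|_{\p,[0,t]},
\end{equation*}
where each summand is a difference of (or a single) well-defined remainder: the first is handled by Lemma~\ref{lemma: Lipschitz estimate rough integral} and produces the factor $\|\bX\|_{p,[0,t]} \vee \|\tbX\|_{p,[0,t]}$ that you need for absorption, and the second is handled by Lemma~\ref{lemma: estimate rough integral} together with the growth bound on $(F-\tF)$, producing the $C_{F-\tF}$ contribution. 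With that change the rest of your plan goes through; incidentally, once you have the small factor $\|\bX\|_{p,[0,t]} \vee \|\tbX\|_{p,[0,t]}$ multiplying the whole controlled-path distance, the weighted norm $\|\cdot\|^{(\delta)}$ is not actually needed here (unlike in the contraction step of the existence proof) — direct absorption after shrinking the interval suffices.
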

	
	\begin{proof}
		\emph{Step~1: Local estimate.} Let $(Y,Y') \in \cV^p_X([0,T];\R^k)$, $(\tY,\tY') \in \cV^p_{\tX}([0,T];\R^k)$ be the global solutions to the RFDE~\eqref{eq: RFDE}, with data $((y,y'), (F,F'), \bX)$, $((\ty,\ty'), (\tF,\tF'), \tbX)$, respectively, see Theorem~\ref{thm: existence and uniqueness}. Let $t \in (0,T]$. Without loss of generality assume that $C_{\tF} \leq C_F$, $C_{\tF,K,X,\tX} \leq \C$. As
		\begin{equation*}
			\|Y- \widetilde{Y}\|_{p,[0,t]} \leq (|Y'_0-\widetilde{Y}'_0|+\|Y'-\widetilde{Y}'\|_{p,[0,t]}) \|X\|_{p,[0,t]}+(|\widetilde{Y}'_0|+\|\widetilde{Y}'\|_{p,[0,t]}) \|X-\widetilde{X}\|_{p,[0,t]},
		\end{equation*}
		Assumption~\ref{assumption: Lipschitz} gives that
		\begin{align*}
			&\|F(Y) - \tF(\tY)\|_{p,[0,t]} \\
			&\quad \leq \|F(Y) - F(\tY)\|_{p,[0,t]} + \|(F-\tF)(\tY)\|_{p,[0,t]} \\
			&\quad \leq \C (|Y_0 - \tY_0| + \|Y - \tY\|_{p,[0,t]}) \\
			&\qquad + C_{F-\tF} (1 + (|\tY'_0| + \|\tY'\|_{p,[0,t]}) \|X\|_{p,[0,t]} + \|R^{\tY}\|_{\p,[0,t]}) \\
			&\quad \lesssim_p \C (|Y_0 - \tY_0| + (|Y'_0 - \tY'_0| + \|Y' - \tY'\|_{p,[0,t]}) (\|X\|_{p,[0,t]} \vee \|\tX\|_{p,[0,t]})) \\
			&\qquad + \C \|R^y - R^{\ty}\|_{\p,[0,t]} + \C \|R^{\int_0^\cdot F(Y) \d \bX - \int_0^\cdot \tF(\tY) \d \tbX}\|_{\p,[0,t]} \\
			&\qquad + \C \|X-\tX\|_{p,[0,t]} + C_{F-\tF} (1 + K) (1 + \|X\|_{p,[0,t]} \vee \|\tX\|_{p,[0,t]}).
		\end{align*}
		Further, by Lemma~\ref{lemma: estimate rough integral} and Lemma~\ref{lemma: Lipschitz estimate rough integral} we have that
		\begin{align*}
			&\|R^{\int_0^\cdot F(Y) \d \bX - \int_0^\cdot \tF(\tY) \d \tbX}\|_{\p,[0,t]} \\
			&\quad \leq \|R^{\int_0^\cdot F(Y) \d \bX - \int_0^\cdot F(\tY) \d \tbX}\|_{\p,[0,t]} + \|R^{\int_0^\cdot (F-\tF)(\tY) \d \tbX}\|_{\p,[0,t]} \\
			&\quad \lesssim \C (|Y_0 - \tY_0| + \|Y,Y';\tY,\tY'\|_{X,\tX,p,[0,t]} + \|X - \tX\|_{p,[0,t]}) (\|\bX\|_{p,[0,t]} \vee \|\tbX\|_{p,[0,t]}) \\
			&\qquad + C_F (1 + K)^2 (1 + \|X\|_{p,[0,t]} \vee \|\tX\|_{p,[0,t]})^2 \|\bX;\tbX\|_{p,[0,t]} \\
			&\qquad + C_{F-\tF} (1 + K)^2 (1 + \|X\|_{p,[0,t]} \vee \|\tX\|_{p,[0,t]})^2 (\|\bX\|_{p,[0,t]} \vee \|\tbX\|_{p,[0,t]}),
		\end{align*}
		where the implicit multiplicative constant depends on $p$, $\|\bX\|_p$ and $\|\tbX\|_p$. Combining the results, we get that
		\begin{align*}
			&\|Y' - \tY'\|_{p,[0,t]} + \|R^Y - R^{\tY}\|_{\p,[0,t]} \\
			&\quad \leq \|y' - \ty'\|_{p,[0,t]} + \|F(Y) - \tF(\tY)\|_{p,[0,t]} \\
			&\qquad + \|R^y - R^{\ty}\|_{\p,[0,t]} + \|R^{\int_0^\cdot F(Y) \d \bX - \int_0^\cdot \tF(\tY) \d \tbX}\|_{\p,[0,t]} \\
			&\quad \leq C_1 (\|\bX\|_{p,[0,t]} \vee \|\tbX\|_{p,[0,t]}) (\|Y' - \tY'\|_{p,[0,t]} + \|R^Y - R^{\tY}\|_{\p,[0,t]}) \\
			&\qquad + C_2 (|Y_0 - \tY_0| + |F_0(Y) - \tF_0(\tY)| + \|y,y';\ty,\ty'\|_{X,\tX,p,[0,t]} + C_{F-\tF} + \|\bX;\tbX\|_{p,[0,t]})
		\end{align*}
		for some constants $C_1>0$, which depends on $p$, $\C$, $\|\bX\|_p$ and $\|\tbX\|_p$, and $C_2 > 1$, which depends additionally on $C_F$ and $K$. Hence, we can choose $t$ sufficiently small such that $C_1 (\|\bX\|_{p,[0,t]} \vee \|\tbX\|_{p,[0,t]}) \leq \frac{1}{2}$, which implies that
		\begin{equation}\label{eq: local continuity estimate}
			\begin{split}
				&\|Y' - \tY'\|_{p,[0,t]} + \|R^Y - R^{\tY}\|_{\p,[0,t]} \\
				&\quad \lesssim |Y_0 - \tY_0| + |F_0(Y) - \tF_0(\tY)| + \|y,y';\ty,\ty'\|_{X,\tX,p,[0,t]} + C_{F-\tF} + \|\bX;\tbX\|_{p,[0,t]}.
			\end{split}
		\end{equation}
		\emph{Step~2: Global estimate.} Recall the right-continuous control function $w \colon \Delta_T \to [0,\infty)$ given by
		\begin{equation*}
			w(s,t) := \|X\|_{p,[s,t]}^p + \|\X\|_{\p,[s,t]}^{\p}, \quad (s,t) \in \Delta_T,
		\end{equation*}
		as introduced in the proof of Theorem~\ref{thm: existence and uniqueness}, and let $\tilde{w}(s,t) := \|\tX\|_{p,[s,t]}^p + \|\tbbX\|_{\p,[s,t]}^{\p}$, $(s,t) \in \Delta_T$. We infer from Step~1 that there exists a constant $\gamma > 0$, which depends on $p$, $\C$, $\|\bX\|_p$ and $\|\tbX\|_p$, such that on any interval $[s,t]$ with $w(s,t) \vee \tilde{w}(s,t) \leq \gamma$ the local solutions satisfy an estimate of the form~\eqref{eq: local continuity estimate}.
		
		Let $c(s,t) := w(s,t) + \tilde{w}(s,t)$, $(s,t) \in \Delta_T$. Since $c$ is right-continuous, there exists a partition $\mathcal{P} = \{0 = t_0 < \dots < t_N = T\}$ of $[0,T]$, such that
		\begin{equation*}
			c(t_i,t_{i+1}-) = \gamma, \quad \text{or} \quad c(t_i,t_{i+1}-) < \gamma \quad \text{and} \quad c(t_i,t_{i+1}-) + c(t_{i+1}-,t_{i+1}) \geq \gamma,
		\end{equation*}
		for every $i = 0, 1, \ldots, N-1$. Since $w$ and $\tilde{w}$ and, thus, $c$ is superadditive, we have that
		\begin{equation*}
			N \gamma \leq \sum_{i=0}^{N-1} c(t_i, t_{i+1}-) + c(t_{i+1}-,t_{i+1}) \leq c(0,T).
		\end{equation*}
		Therefore, the number of partition points $N$ may be bounded by a constant depending only on $\gamma$, $w(0,T)$ and $\tilde{w}(0,T)$. Thus, in this step, we may combine the local estimates on each of the subintervals $[t_i,t_{i+1})$, together with simple estimates on the jumps at the end-points of these subintervals, which we aim to derive, to obtain the global estimate. More precisely, by Step~1, we have the local estimate
		\begin{equation}\label{eq: ito-lip-pre}
			\begin{split}
				&\|Y' - \tY'\|_{p,[t_i,t_{i+1})} + \|R^Y - R^{\tY}\|_{\p,[t_i,t_{i+1})} \\
				&\quad \lesssim |Y_{t_i} - \tY_{t_i}| + |F_{t_i}(Y) - \tF_{t_i}(\tY)| + |y'_{t_i} - \ty'_{t_i}| \\
				&\qquad + \|y' - \ty'\|_{p,[t_i,t_{i+1})} + \|R^y - R^{\ty}\|_{\p,[t_i,t_{i+1})} + C_{F-\tF} + \|\bX;\tbX\|_{p,[t_i,t_{i+1})},
			\end{split}
		\end{equation}
		for $i=0, \ldots, N-1$, where the implicit multiplicative constant depends on $p$ and $\C$, $C_F$, $K$, $\|\bX\|_p$, $\|\tbX\|_p$, but not on the index $i$. So, it remains to bound
		\begin{equation*}
			|Y'_{t_{i+1}-,t_{i+1}} - \tY'_{t_{i+1}-,t_{i+1}}| + |R^Y_{t_{i+1}-,t_{i+1}} - R^{\tY}_{t_{i+1}-,t_{i+1}}|
		\end{equation*}
		to extend the previous estimate to $[t_i,t_{i+1}]$.
		
		We note that $(\int_0^\cdot F_s(Y) \dd \bX_s)_{t-,t} = F_{t-}(Y) X_{t-,t} + F'_{t-}(Y,Y') \X_{t-,t}$, that is, with $Y_{t-,t} = y_{t-,t} + (\int_0^\cdot F_s(Y) \dd \bX_s)_{t-,t}$, it follows that $Y'_{t-,t} = y'_{t-,t} + F_{t-,t}(Y)$ and $R^Y_{t-,t} = R^y_{t-,t} + F'_{t-}(Y,Y') \X_{t-,t}$, for $t \in (0,T]$. Given the assumptions, we then have
		\begin{equation}\label{eq: local continuity estimate 2}
			\begin{split}
				&\|Y' - \tY'\|_{p,[t_i,t_{i+1}]} + \|R^Y - R^{\tY}\|_{\p,[t_i,t_{i+1}]} \\
				&\quad \leq \|Y' - \tY'\|_{p,[t_i,t_{i+1})} + \|R^Y - R^{\tY}\|_{\p,[t_i,t_{i+1})}  \\
				&\qquad + |Y'_{t_{i+1}-,t_{i+1}} - \tY'_{t_{i+1}-,t_{i+1}}| + |R^Y_{t_{i+1}-,t_{i+1}} - R^{\tY}_{t_{i+1}-,t_{i+1}}| \\
				&\quad \lesssim |Y_{t_i} - \tY_{t_i}| + |F_{t_i}(Y) - \tF_{t_i}(\tY)| + \|y,y';\ty,\ty'\|_{X,\tX,p,[t_i,t_{i+1}]} + C_{F-\tF} + \|\bX;\tbX\|_{p,[t_i,t_{i+1}]},
			\end{split}
		\end{equation}
		where the implicit multiplicative constant depends on $p$, $C_F$, $\C$, $K$, $\|\bX\|_p$ and $\|\tbX\|_p$, and not on the index $i$. Here, we used Assumption~\ref{assumption: Lipschitz} and the estimate~\eqref{eq: ito-lip-pre} to derive that
		\begin{align*}
			&|(R^Y_{t_{i+1}-,t_{i+1}} - R^{\tY}_{t_{i+1}-,t_{i+1}}) - (R^y_{t_{i+1}-,t_{i+1}} - R^{\ty}_{t_{i+1}-,t_{i+1}})| \\
			&\quad \leq |F'_{t_{i+1}-}(Y,Y') \X_{t_{i+1}-,t_{i+1}} - \tF'_{t_{i+1}-}(\tY,\tY') \tbbX_{t_{i+1}-,t_{i+1}}| \\
			&\quad \leq |F'_{t_{i+1}-}(Y,Y') - \tF'_{t_{i+1}-}(\tY,\tY')| |\X_{t_{i+1}-,t_{i+1}}| + |\tF_{t_{i+1}-}'(\tY,\tY')| |\X_{t_{i+1-},t_{i+1}} - \tbbX_{t_{i+1}-,t_{i+1}}| \\
			&\quad \leq (|F'_{t_i}(Y,Y') - \tF'_{t_i}(\tY,\tY)| + \|F'(Y,Y') - \tF'(\tY,\tY')\|_{p,[t_i,t_{i+1})})\|\X\|_{\p,[t_i,t_{i+1}]} \\
			&\qquad + (|\tF'_{t_i}(\tY,\tY')| + \|\tF'(\tY,\tY')\|_{p,[t_i,t_{i+1})}) \|\X - \tbbX\|_{\p,[t_i,t_{i+1}]} \\
			&\quad \lesssim |Y_{t_i} - \tY_{t_i}| + |F_{t_i}(Y) - \tF_{t_i}(\tY)| + \|y,y';\ty,\ty'\|_{X,p,[t_i,t_{i+1})} + C_{F-\tF} + \|\bX;\tbX\|_{p,[t_i,t_{i+1}]},
		\end{align*}
		and
		\begin{align*}
			&|(Y'_{t_{i+1}-,t_{i+1}} - \tY'_{t_{i+1}-,t_{i+1}}) - (y'_{t_{i+1}-,t_{i+1}} - \ty'_{t_{i+1}-,t_{i+1}})| \\
			&\quad = |F_{t_{i+1}-,t_{i+1}}(Y) - \tF_{t_{i+1}-,t_{i+1}}(\tY) | \\
			&\quad \leq |(F-\tF)_{t_{i+1}-,t_{i+1}}(Y)| + |\tF_{t_{i+1}-,t_{i+1}}(Y) - \tF_{t_{i+1}-,t_{i+1}}(\tY)| \\
			&\quad \leq C_{F-\tF} (1 + (|Y'_{t_i}| + \|Y'\|_{p,[t_i,t_{i+1}]}) \|X\|_{p,[t_i,t_{i+1}]} + \|R^Y\|_{\p,[t_i,t_{i+1}]}) \\
			&\qquad + \C (|Y_{t_i} - \tY_{t_i}| + \|Y - \tY\|_{p,[t_i,t_{i+1})}+ |Y_{t_{i+1}-,t_{i+1}} - \tY_{t_{i+1}-,t_{i+1}}|) \\
			&\quad \lesssim |Y_{t_i} - \tY_{t_i}| + |F_{t_i}(Y) - \tF_{t_i}(\tY)| + \|y,y';\ty,\ty'\|_{X,p,[t_i,t_{i+1})} + C_{F-\tF} + \|\bX;\tbX\|_{p,[t_i,t_{i+1}]},
		\end{align*}
		where the implicit multiplicative constant depends on $p$, $C_F$, $\C$, $K$ and $\|\bX\|_p \allowbreak \vee \|\tbX\|_p$, and not on the index $i$.
		
		Now, we need to control the term  $|Y_{t_i} - \tY_{t_i}| + |F_{t_i}(Y) - \tF_{t_i}(\tY)| + |y'_{t_i} - \ty'_{t_i}|$. For this, we note that
		\begin{align*}
			&|Y_{t_i} - \tY_{t_i}| + |F_{t_i}(Y) - \tF_{t_i}(\tY)| + |y'_{t_i} - \ty'_{t_i}| \\
			&\quad \leq |Y_{t_{i-1}} - \tY_{t_{i-1}}| + |F_{t_{i-1}}(Y) - \tF_{t_{i-1}}(\tY)| + |y'_{t_{i-1}} - \ty'_{t_{i-1}}| \\
			&\qquad + \|Y - \tY\|_{p,[t_{i-1},t_i]} + \|F(Y) - \tF(\tY)\|_{p,[t_{i-1},t_i]} + \|y' - \ty'\|_{p,[t_{i-1},t_i]} \\
			&\quad \lesssim |Y_{t_{i-1}} - \tY_{t_{i-1}}| + |F_{t_{i-1}}(Y) - \tF_{t_{i-1}}(\tY)| + \|y,y';\ty,\ty'\|_{X,\tX,p,[t_{i-1},t_i]} \\
			&\qquad + C_{F-\tF} + \|\bX;\tbX\|_{p,[t_{i-1},t_i]} \\
			&\qquad + \|Y' - \tY'\|_{p,[t_{i-1},t_i]} + \|R^Y - R^{\tY}\|_{\p,[t_{i-1},t_i]},
		\end{align*}
		where the implicit multiplicative constant depends on $p$, $C_F$, $\C$, $K$, $\|X\|_p$ and $\|\tX\|_p$, and not on the index $i$; thus applying the estimate~\eqref{eq: local continuity estimate 2} for $i-1$ gives that
		\begin{align*}
			&|Y_{t_i} - \tY_{t_i}| + |F_{t_i}(Y) - \tF_{t_i}(\tY)| + |y'_{t_i} - \ty'_{t_i}| \\
			&\quad \lesssim |Y_{t_{i-1}} - \tY_{t_{i-1}}| + |F_{t_{i-1}}(Y) - \tF_{t_{i-1}}(\tY)| + \|y,y';\ty,\ty'\|_{X,\tX,p,[t_{i-1},t_i]} \\
			&\qquad + C_{F-\tF} + \|\bX;\tbX\|_{p,[t_{i-1},t_i]}.
		\end{align*}
		Iteratively, we obtain for any $i = 1, \ldots, N$ that
		\begin{align*}
			&|Y_{t_i} - \tY_{t_i}| + |F_{t_i}(Y) - \tF_{t_i}(\tY)| + |y'_{t_i} - \ty'_{t_i}| \\
			&\quad \lesssim |y_0 - \ty_0| + |F_0(y) - \tF_0(\ty)| + |y'_0 - \ty'_0| \\
			&\qquad + \sum_{j=0}^{i-1} \Big( \|y' - \ty'\|_{p,[t_j,t_{j+1}]} +\|R^y - R^{\ty}\|_{\p,[t_j,t_{j+1}]} + C_{F-\tF} + \|\bX;\tbX\|_{p,[t_j,t_{j+1}]} \Big),
		\end{align*}
		that is,
		\begin{align*}
			&(|Y_{t_i} - \tY_{t_i}| + |F_{t_i}(Y) - \tF_{t_i}(\tY)| + |y'_{t_i} - \ty'_{t_i}|)^p \\
			&\quad \lesssim (|y_0 - \ty_0| + |F_0(y) - \tF_0(\ty)| + |y'_0 - \ty'_0| + N C_{F-\tF})^p \\
			&\qquad + \sum_{j=0}^{i-1} \Big( \|y' - \ty'\|_{p,[t_j,t_{j+1}]}^p +\|R^y - R^{\ty}\|_{\p,[t_j,t_{j+1}]}^p + \|\bX;\tbX\|_{p,[t_j,t_{j+1}]}^p \Big).
		\end{align*}
		This implies that
		\begin{align*}
			&|Y_{t_i} - \tY_{t_i}| + |F_{t_i}(Y) - \tF_{t_i}(\tY)| + |y'_{t_i} - \ty'_{t_i}|\\
			&\quad \lesssim |y_0 - \ty_0| + |F_0(y) - \tF_0(\ty)| + \|y,y';\ty,\ty'\|_{X,\tX,p} + C_{F-\tF} + \|\bX;\tbX\|_p,
		\end{align*}
		which is the desired control.
		
		If we plug this into~\eqref{eq: local continuity estimate 2}, it follows that
		\begin{align*}
			&\|Y' - \tY'\|_{p,[t_i,t_{i+1}]} + \|R^Y - R^{\tY}\|_{\p,[t_i,t_{i+1}]} \\
			&\quad \lesssim |y_0 - \ty_0| + |F_0(y) - \tF_0(\ty)| + \|y,y';\ty,\ty'\|_{X,\tX,p} + C_{F-\tF} + \|\bX;\tbX\|_p.
		\end{align*}
		Since $\|\cdot\|_{p,[0,T]} \leq N \sum_{i=0}^{N-1} \|\cdot\|_{p,[t_i,t_{i+1}]}$ for any $p \geq 1$, see e.g.~\cite[Lemma~A.1]{Allan2021b}, the estimate finally follows.
	\end{proof}

	\section{Examples of RFDEs}\label{sec: examples}
	
	The general framework of rough functional differential equations, presented in Section~\ref{sec: RFDE}, allows to treat various classes of rough differential equations. In this section, some exemplary rough functional differential equations are discussed, aiming to develop the main conceptional ideas and demonstrating the scope of RFDEs rather than pushing for the most general results.
	
	\subsection{Classical RDEs}
	
	Let us start with the classical rough differential equation (RDE)
	\begin{equation}\label{eq: classical RDE}
		Y_t = y_0 + \int_0^t f(Y_s) \dd \bX_s, \quad t \in [0,T],
	\end{equation}
	where $y_0 \in \R^k$, $f \in C^3_b(\R^k;\cL(\R^d;\R^k))$ and $\bX \in \cD^p([0,T];\R^d)$ for $p \in (2,3)$. While the existence and uniqueness of solutions to the RDE~\eqref{eq: classical RDE} driven by a continuous rough path and the continuity of the solution map were first proven by Lyons~\cite{Lyons1998}, the analogous results for RDEs driven by c{\`a}dl{\`a}g rough paths were more recently obtained by Friz and Zhang~\cite{Friz2018}. As an application of Theorem~\ref{thm: existence and uniqueness} and Theorem~\ref{thm: continuity}, one can recover these results, demonstrating that Assumption~\ref{assumption: existence} and~\ref{assumption: Lipschitz} are, indeed, natural generalizations of the classical assumptions of the coefficients of a rough differential equation. Furthermore, note that Corollary~\ref{cor: classical RDE} presents the continuity of the solution map with respect to the controlled path norm, which slightly generalizes \cite[Theorem~3.8]{Friz2018}.
	
	\begin{corollary}\label{cor: classical RDE}~
		\begin{itemize}
			\item[(i)]
			If $f \in C^3_b(\R^k;\cL(\R^d;\R^k))$, there exists a unique solution to the RDE~\eqref{eq: classical RDE}. Moreover, there exists a non-decreasing function $K_p \colon [0, \infty)^2 \to [0,\infty)$ such that
			\begin{equation*}
				\|Y,Y'\|_{X,p} \leq K_p(\|f\|_{C^2_b}, \|\bX\|_p).
			\end{equation*}
			\item[(ii)]
			Let $(Y,Y') \in \cV^p_X([0,T];\R^k)$ be the unique solution to the RDE~\eqref{eq: classical RDE}. Moreover, let $\ty_0 \in \R^k$, $\tf \in C^3_b(\R^k;\cL(\R^d;\R^k))$, $\tbX \in \cD^p([0,T];\R^d)$ with corresponding solution $(\tY,\tY') \in \cV^p_{\tX}([0,T];\R^d)$, and suppose that $\|Y,Y'\|_{X,p}, \|\tY,\tY'\|_{\tX,p} \leq K$, for some ${K > 0}$. Then, we have the estimate
			\begin{equation*}
				|Y_0 - \tY_0| + \|Y,Y';\tY,\tY'\|_{X,\tX,p} \lesssim |y_0 - \ty_0| + \|f - \tf\|_{C^2_b} + \|\bX;\tbX\|_p,
			\end{equation*}
			where the implicit multiplicative constant depends on $p$, $\|f\|_{C^3_b} \vee \|\tf\|_{C^3_b}$, $K$, $\|\bX\|_p$ and $\|\tbX\|_p$.
		\end{itemize}
	\end{corollary}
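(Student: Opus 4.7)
The plan is to recast the classical RDE~\eqref{eq: classical RDE} as an instance of the RFDE~\eqref{eq: RFDE} with the trivially controlled constant initial path $y_t := y_0$, $y' := 0$, and the non-anticipative functional
\begin{equation*}
F_t(Y) := f(Y_t), \qquad F'_t(Y,Y') := \D f(Y_t) Y'_t,
\end{equation*}
whose value at time~$t$ depends only on $(Y_t, Y'_t)$. Once Assumption~\ref{assumption} is checked for this pair (and for $(F-\tF, F'-\tF')$), both parts follow: (i) is Theorem~\ref{thm: existence and uniqueness} applied with $|F_0(y)| \leq \|f\|_\infty$ and $\|y,y'\|_{X,p} = 0$, while (ii) is Theorem~\ref{thm: continuity} applied with $|F_0(y) - \tF_0(\ty)| \leq \|f - \tf\|_\infty + \|\D f\|_\infty |y_0 - \ty_0|$ and $\|y,y';\ty,\ty'\|_{X,\tX,p} = 0$.

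For the growth conditions in Assumption~\ref{assumption}~(i), the elementary bound $|f(Y_v) - f(Y_u)| \leq \|\D f\|_\infty |Y_v - Y_u|$ gives $\|f(Y)\|_{p,[s,t]} \leq \|\D f\|_\infty \|Y\|_{p,[s,t]}$. For the controlled-path growth, a first-order Taylor expansion
\begin{equation*}
f(Y_t) - f(Y_s) = \D f(Y_s) Y_{s,t} + r^f_{s,t}, \qquad |r^f_{s,t}| \leq \tfrac{1}{2} \|\D^2 f\|_\infty |Y_{s,t}|^2,
\end{equation*}
combined with $Y_{s,t} = Y'_s X_{s,t} + R^Y_{s,t}$, identifies $\D f(Y) Y'$ as the Gubinelli derivative of $f(Y)$ and bounds the $\p$-variation of the associated remainder by $\|f\|_{C^2_b}$ times a quadratic expression in $\|Y,Y'\|_{X,p,[s,t]}$ and $(1 + \|X\|_{p,[s,t]})^2$. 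This yields $C_F \lesssim \|f\|_{C^2_b}$, and the same computation applied to $f - \tf$ gives $C_{F - \tF} \lesssim \|f - \tf\|_{C^2_b}$ as needed for~(ii).

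The Lipschitz conditions in Assumption~\ref{assumption}~(ii) are handled by analogous Taylor expansions applied to the differences $f(Y) - f(\tY)$ and $\D f(Y) Y' - \D f(\tY) \tY'$, using the representation
\begin{equation*}
\D f(Y_t) - \D f(\tY_t) = \int_0^1 \D^2 f\bigl(\tY_t + \theta(Y_t - \tY_t)\bigr) \dd \theta \, (Y_t - \tY_t),
\end{equation*}
together with the hypothesis $\|Y,Y'\|_{X,p}, \|\tY,\tY'\|_{\tX,p} \leq K$. The third derivative enters precisely when estimating the $\p$-variation of the remainder of $\D f(Y) Y' - \D f(\tY) \tY'$, since one has to control a second-order finite difference of $\D f$ simultaneously along the two paths $Y$ and $\tY$. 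This yields $\C \lesssim \|f\|_{C^3_b} (1+K)^2 (1 + \|X\|_p \vee \|\tX\|_p)^2$, which is locally bounded in the stated variables.

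The main obstacle is the careful Taylor-remainder bookkeeping on the controlled-path scale, but these are standard computations whose continuous-time versions are given e.g. in~\cite{Friz2020} and transfer verbatim to the c\`adl\`ag setting, since only pointwise Taylor expansions and $p$-/$\p$-variation seminorms are involved. Substituting $C_F \lesssim \|f\|_{C^2_b}$, $\C \lesssim \|f\|_{C^3_b}$ and $C_{F-\tF} \lesssim \|f - \tf\|_{C^2_b}$ into the a priori bound of Theorem~\ref{thm: existence and uniqueness} yields~(i), and into the stability estimate of Theorem~\ref{thm: continuity} yields~(ii), with the implicit constant depending as claimed on $\|f\|_{C^3_b} \vee \|\tf\|_{C^3_b}$, $K$, $\|\bX\|_p$ and $\|\tbX\|_p$.
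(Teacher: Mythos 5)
Your proposal is correct and follows essentially the same route as the paper: recast the RDE as an RFDE with constant controlled initial path, verify Assumption~\ref{assumption} for $(F,F')=(f(Y),\D f(Y)Y')$ and for $(F-\tF,(F-\tF)')$ via Taylor expansion (this is exactly the content of the paper's Lemma~\ref{lemma: transformation of controlled path}, which in turn mirrors~\cite[Lemmas~3.1, 3.5--3.7]{Friz2018}), and then apply Theorems~\ref{thm: existence and uniqueness} and~\ref{thm: continuity}. The only cosmetic difference is that the paper factors the Assumption-check into a standalone lemma while you sketch it inline; the bookkeeping $|F_0(y)-\tF_0(\ty)|\leq \|f-\tf\|_\infty+\|\D\tf\|_\infty|y_0-\ty_0|$ that you make explicit is left implicit in the paper.
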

	
	In order to apply the existence and uniqueness result presented in Theorem~\ref{thm: existence and uniqueness}, and also the continuity result presented in Theorem~\ref{thm: continuity}, we need to check that the vector field $f$ in the RDE~\eqref{eq: classical RDE} satisfies Assumption~\ref{assumption: existence} and~\ref{assumption: Lipschitz}. This is the content of the next lemma, which we formulate slightly more general, with regard to the dimensions of the underlying spaces, for later use.
	
	\begin{lemma}\label{lemma: transformation of controlled path}
		Let $f \in C^3_b(\R^m;\cL(\R^d;\R^k))$ and $X, \tX \in D^p([0,T];\R^d)$. The non-anticipative functional
		\begin{equation*}
			(F,F') \colon \cV^p_X([0,T];\R^m) \to \cV^p_X([0,T];\cL(\R^d;\R^k)), \quad (F(Y),F'(Y,Y')) := (f(Y), \D f(Y) Y'),
		\end{equation*}
		satisfies Assumption~\ref{assumption: Lipschitz}~(i) and (ii), and, in particular, Assumption~\ref{assumption: existence}, given $X,\tX$.
	\end{lemma}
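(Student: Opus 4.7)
The plan is to verify directly that with the Gubinelli derivative $F'(Y,Y') := \D f(Y) Y'$, the pair $(f(Y), \D f(Y)Y')$ is a controlled path with respect to $X$, and then establish (i) and (ii) of Assumption~\ref{assumption} by means of Taylor expansions for $f$ up to second order, exploiting that $f \in C^3_b$. First, since $\|\D f\|_\infty \leq \|f\|_{C^3_b}$, Lipschitz continuity of $f$ immediately gives $\|F(Y)\|_{p,[s,t]} = \|f(Y)\|_{p,[s,t]} \leq \|\D f\|_\infty \|Y\|_{p,[s,t]}$, which yields the first growth bound with $C_F = \|f\|_{C^3_b}$. For the controlled path structure, I would use the first-order Taylor expansion
\begin{equation*}
f(Y_t) - f(Y_s) = \D f(Y_s) Y_{s,t} + \int_0^1 (1-\theta)\, \D^2 f(Y_s + \theta Y_{s,t}) Y_{s,t}^{\otimes 2} \dd\theta,
\end{equation*}
together with $Y_{s,t} = Y'_s X_{s,t} + R^Y_{s,t}$, to identify the remainder
\begin{equation*}
R^{F(Y)}_{s,t} = \D f(Y_s) R^Y_{s,t} + \int_0^1 (1-\theta)\, \D^2 f(Y_s + \theta Y_{s,t}) Y_{s,t}^{\otimes 2} \dd\theta,
\end{equation*}
so $|R^{F(Y)}_{s,t}| \lesssim \|f\|_{C^3_b}\bigl(|R^Y_{s,t}| + |Y_{s,t}|^2\bigr)$. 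Summing and using $|Y_{s,t}| \lesssim |Y'_s||X_{s,t}| + |R^Y_{s,t}|$ one obtains the $p/2$-variation bound for $R^{F(Y)}$, where the square produces the expected quadratic dependence on $\|Y,Y'\|_{X,p,[s,t]}$ and on $\|X\|_{p,[s,t]}$. The Gubinelli derivative $\D f(Y) Y'$ is handled by the Leibniz-type decomposition $(\D f(Y) Y')_{s,t} = \D f(Y_s)\, Y'_{s,t} + (\D f(Y))_{s,t}\, Y'_t$, so that $\|\D f(Y) Y'\|_{p,[s,t]} \leq \|\D f\|_\infty \|Y'\|_{p,[s,t]} + \|\D^2 f\|_\infty \|Y\|_{p,[s,t]} \|Y'\|_\infty$, which combined with $\|Y'\|_\infty \leq |Y'_0| + \|Y'\|_p$ delivers the second growth bound in the required squared form.

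The Lipschitz estimates follow the same pattern. For the $p$-variation bound one writes $f(Y) - f(\tY) = \int_0^1 \D f(\tY + \theta(Y - \tY))(Y - \tY) \dd\theta$, giving $\|F(Y) - F(\tY)\|_{p,[s,t]} \leq \|\D f\|_\infty \|Y - \tY\|_{p,[s,t]} + \|\D^2 f\|_\infty\|Y - \tY\|_\infty\,\|Y\|_{p,[s,t]}$ via a further Leibniz splitting, and the required form is recovered after writing $\|Y - \tY\|_\infty \leq |Y_s - \tY_s| + \|Y - \tY\|_{p,[s,t]}$ and invoking the uniform bound $K$ on the norms of $(Y,Y')$, $(\tY,\tY')$. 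For the full controlled-path Lipschitz estimate, one expands
\begin{equation*}
\D f(Y) Y' - \D f(\tY) \tY' = \D f(Y)(Y' - \tY') + (\D f(Y) - \D f(\tY))\,\tY',
\end{equation*}
and analogously splits $R^{F(Y)} - R^{F(\tY)}$ using the above remainder formula together with the second-order mean value form of $\D f(Y_s) - \D f(\tY_s)$; boundedness of $\D^2 f$ and $\D^3 f$ then produces the terms proportional to $|Y_s - \tY_s|$, $\|Y,Y';\tY,\tY'\|_{X,\tX,p,[s,t]}$ and, through the difference $X_{s,t} - \tX_{s,t}$ appearing when comparing the two Gubinelli corrections, $\|X - \tX\|_{p,[s,t]}$, all multiplied by a constant depending on $K$ and $\|f\|_{C^3_b}$.

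The bulk of the work is bookkeeping: I expect the main obstacle to be the remainder Lipschitz estimate, where one must track a difference of double integrals $\int_0^1 (1-\theta) [\D^2 f(Y_s + \theta Y_{s,t}) Y_{s,t}^{\otimes 2} - \D^2 f(\tY_s + \theta \tY_{s,t}) \tY_{s,t}^{\otimes 2}] \dd\theta$ and decompose it into a ``difference of $Y_{s,t}^{\otimes 2}$'' term (handled via $|Y_{s,t} - \tY_{s,t}| \leq |Y'_s - \tY'_s||X_{s,t}| + |\tY'_s||X_{s,t} - \tX_{s,t}| + |R^Y_{s,t} - R^{\tY}_{s,t}|$) and a ``difference of $\D^2 f$'' term (using $\|\D^3 f\|_\infty$). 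Once the pointwise estimate on $R^{F(Y)}_{s,t} - R^{F(\tY)}_{s,t}$ is in place, taking the $p/2$-variation and applying Hölder-type inequalities in the usual way yields precisely the bound required in Assumption~\ref{assumption}~(ii), with a constant of the form $\C = C(K, \|f\|_{C^3_b}, \|X\|_p, \|\tX\|_p)$.
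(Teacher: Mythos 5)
Your proposal is correct and takes essentially the same route as the paper: the paper sketches the growth estimates and then defers the details to the standard Taylor-expansion and Leibniz-decomposition arguments in \cite[Lemmas~3.1, 3.5--3.7]{Friz2018}, which are exactly the computations you carry out explicitly (identifying $R^{F(Y)}_{s,t} = \D f(Y_s)R^Y_{s,t} + \text{(quadratic remainder)}$, splitting $\D f(Y)Y'$ by the product rule, and using mean-value forms for the differences). The only cosmetic slips are that the second term in your $p$-variation Lipschitz bound should involve $\|Y\|_{p,[s,t]} \vee \|\tY\|_{p,[s,t]}$ rather than just $\|Y\|_{p,[s,t]}$, and that on $[s,t]$ one should use $\|Y'\|_{\infty,[s,t]} \leq |Y'_s| + \|Y'\|_{p,[s,t]}$; both are absorbed into the constant $\C$ via the uniform bound $K$ and do not affect the conclusion.
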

	
	\begin{proof}
		Since the proof is fairly standard, we provide only a sketch of a proof, following, e.g., the proofs of~\cite[Lemma~3.5, Lemma~3.6, Lemma~3.7]{Friz2018}.
		
		Fix $(s,t) \in \Delta_T$ and let $(Y,Y') \in \cV^p_X$, $(\tY,\tY') \in \cV^p_{\tX}$.
		
		\emph{Growth conditions.} It is clear that $|F_t(Y)| \leq \|f\|_{C^2_b}$, and it follows from the Lipschitz continuity of $f$ that
		\begin{equation*}
			|F_{t-,t}(Y)| \leq \|F(Y)\|_{p,[s,t]} \leq \|f\|_{C^2_b} \|Y\|_{p,[s,t]}.
		\end{equation*}
		We now note that $\|Y\|_{p,[s,t]} \leq \|Y\|_{p,[s,t)} + |Y_{t-,t}|$ as well as $\|Y\|_{p,[s,t]} \leq C_p(1 + (|Y'_s| + \|Y'\|_{p,[s,t]}) \cdot \|X\|_{p,[s,t]} + \|R^Y\|_{\p,[s,t]})$.
		Further, it holds that
		\begin{align*}
			&|F'_s(Y,Y')| + \|F'(Y,Y')\|_{p,[s,t]} = |\D f(Y_s) Y'_s| + \|\D f(Y) Y'\|_{p,[s,t]} \\
			&\quad \leq \|f\|_{C^2_b} (|Y'_s| + \|Y'\|_{p,[s,t]}) (1 + \|Y\|_{p,[s,t]}) \\
			&\quad \lesssim_p \|f\|_{C^2_b} (|Y'_s| + \|Y'\|_{p,[s,t]}) (1 + (|Y'_s| + \|Y'\|_{p,[s,t]}) \|X\|_{p,[s,t]} + \|R^Y\|_{\p,[s,t]}) \\
			&\quad \lesssim_p \|f\|_{C^2_b} (1 + \|Y,Y'\|_{X,p,[s,t]}) (1 + \|X\|_{p,[s,t]})
		\end{align*}
		and by Taylor's expansion,
		\begin{align*}
			R^{F(Y)}_{s,t} &= R^{f(Y)}_{s,t} = f(Y_t) - f(Y_s) - \D f(Y_s)Y_{s,t} + \D f(Y_s) R^Y_{s,t} \\
			&= \frac{1}{2} \D^2 f(Y_s + \lambda Y_{s,t}) Y^2_{s,t} + \D f(Y_s) R^Y_{s,t},
		\end{align*}
		with $\lambda \in [0,1]$, which implies that
		\begin{align*}
			&\|R^{F(Y)}\|_{\p,[s,t]} = \|R^{f(Y)}\|_{\p,[s,t]} \leq \|f\|_{C^2_b} (\|Y\|^2_{p,[s,t]} + \|R^Y\|_{\p,[s,t]}) \\
			&\quad \lesssim_p \|f\|_{C^2_b} (((|Y'_s| + \|Y'\|_{p,[s,t]}) \|X\|_{p,[s,t]} + \|R^Y\|_{\p,[s,t]})^2 + \|R^Y\|_{\p,[s,t]}) \\
			&\quad \lesssim_p \|f\|_{C^2_b} (1 + \|Y,Y'\|_{X,p,[s,t]})^2 (1 + \|X\|_{p,[s,t]})^2.
		\end{align*}
		Assumption~\ref{assumption: Lipschitz}~(i) therefore holds with some constant $C_F = \|f\|_{C^2_b}$ up to a multiplicative constant which depends on $p$.
		
		\emph{Lipschitz conditions.} Fix $K > 0$ and assume that $\|Y,Y'\|_{X,p,[s,t]}, \|\tY,\tY'\|_{\tX,p,[s,t]} \leq K$. The proofs work verbatim as the proofs of~\cite[Lemma~3.1 and Lemma~3.7]{Friz2018}. The constant $\C$ depends on $p$ and $\|f\|_{C^3_b}$, $K$, $\|X\|_p$ and $\|\tX\|_p$.
	\end{proof}
	
	\begin{proof}[Proof of Corollary~\ref{cor: classical RDE}]
		\emph{(i)} The existence and uniqueness of the solution follows immediately from Lemma~\ref{lemma: transformation of controlled path} and Theorem~\ref{thm: existence and uniqueness}. For the a priori estimate, note that $\|y,y'\|_{X,p} = 0$ and $C_F \lesssim_p \|f\|_{C^2_b}$.
		
		\emph{(ii)} To apply the continuity result presented in Theorem~\ref{thm: continuity}, we need to ensure that the functionals satisfy the required assumptions. For $(F,F'), (\tF,\tF')$, this is given in Lemma~\ref{lemma: transformation of controlled path}, and further,
		\begin{align*}
			&(F-\tF,(F-\tF)') \colon \cV^p_X([0,T];\R^k) \to \cV^p_X([0,T];\cL(\R^d;\R^k)), \\
			&((F-\tF)(Y), (F-\tF)'(Y,Y')) := (f(Y) - \tf(Y), \D f(Y) Y' - \D \tf(Y) Y'),
		\end{align*}
		satisfies the corresponding estimates in Assumption~\ref{assumption: Lipschitz}, since $C^3_b$ is a vector space. Thus we have, $C_{F-\tF} \lesssim_p \|f - \tf\|_{C^2_b}$.
	\end{proof}
	
	\subsection{Controlled RDEs}
	
	Motivated by pathwise stochastic control, see e.g.~\cite{Diehl2017, Allan2020}, and robust stochastic filtering, see e.g.~\cite{Allan2021b}, as well as analogously to controlled stochastic differential equations, see e.g.~\cite{Pham2009}, we consider the controlled rough differential equation
	\begin{equation}\label{eq: controlled RDE}
		Y_t = y_t + \int_0^t f(\alpha_s, Y_s) \dd \bX_s, \quad t \in [0,T],
	\end{equation}
	where $\bX \in \cD^p([0,T];\R^d)$ for $p \in (2,3)$, $(y,y') \in \cV^p_X([0,T];\R^k)$, $f \in C^3_b(\R^{k+e};\cL(\R^d;\R^k))$, and $(\alpha,\alpha') \in \cV^p_X([0,T];\R^e)$ is a fixed controlled path, with $e \in \N$. In case of continuous rough paths and controls $\alpha$ of finite $\p$-variation, controlled RDEs were treated in~\cite[Theorem~2.3]{Allan2021b}. The following corollary provides an existence, uniqueness and continuity result for controlled RDEs driven by c{\`a}dl{\`a}g $p$-rough paths and with controls $\alpha$, which are only required to be controlled paths.
	
	\begin{corollary}\label{cor: controlled RDE}~
		\begin{itemize}
			\item[(i)] If $f \in C^3_b(\R^{k+e};\cL(\R^d;\R^k))$ and $(\alpha,\alpha') \in \cV^p_X([0,T];\R^e)$, then there exists a unique solution to the controlled rough differential equation~\eqref{eq: controlled RDE}. Moreover, there exists a componentwise non-decreasing function $K_p \colon [0, \infty)^5 \to [0,\infty)$ such that
			\begin{equation*}
				\|Y,Y'\|_{X,p} \leq K_p(\|f\|_{C^2_b}, \|y,y'\|_{X,p}, \|\alpha\|_p, \|\alpha,\alpha'\|_{X,p}, \|\bX\|_p).
			\end{equation*}
			\item[(ii)] Let $(Y,Y') \in \cV^p_X([0,T];\R^k)$ be the unique solution to the controlled rough differential equation~\eqref{eq: controlled RDE}. Moreover, let $(\ty,\ty') \in \cV^p_X([0,T];\R^k)$, $(\tilde{\alpha}, \tilde{\alpha}') \in \cV^p_X([0,T];\R^e)$, $\tf \in C^3_b(\R^{k+e};\cL(\R^d;\R^k))$, with corresponding solution $(\tY,\tY') \in \cV^p_X([0,T];\R^k)$, and suppose that $\|Y,Y'\|_{X,p}, \|\tY,\tY'\|_{X,p} \leq K$, for some $K > 0$. Then, we have the estimate
			\begin{align*}
				&|Y_0 - \tY_0| + \|Y,Y';\tY,\tY'\|_{X,p} \\
				&\quad \lesssim |y_0 - \ty_0| + \|y,y';\ty,\ty'\|_{X,p} + \|f - \tf\|_{C^2_b} + |\alpha_0 - \tilde{\alpha}_0|  + \|\alpha,\alpha';\tilde{\alpha},\tilde{\alpha}\|_{X,p},
			\end{align*}
			where the implicit multiplicative constant depends on $p$, $\|f\|_{C^3_b} \vee \|\tf\|_{C^3_b}$,  $\|\alpha,\alpha'\|_{X,p}$, $\|\tilde{\alpha},\tilde{\alpha}'\|_{X,p}$, $K$, and $\|\bX\|_p$.
		\end{itemize}
	\end{corollary}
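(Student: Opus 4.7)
The plan is to reduce both parts to Theorem~\ref{thm: existence and uniqueness} and Theorem~\ref{thm: continuity} by recasting the controlled RDE~\eqref{eq: controlled RDE} as an RFDE in the form~\eqref{eq: RFDE}. Concretely, define the non-anticipative functional $(F,F') \colon \cV^p_X([0,T];\R^k) \to \cV^p_X([0,T];\cL(\R^d;\R^k))$ by
\begin{equation*}
\bigl(F(Y), F'(Y,Y')\bigr) := \bigl(f(\alpha, Y),\, \D f(\alpha, Y)(\alpha', Y')\bigr).
\end{equation*}
The crucial observation is that the concatenation $(\alpha,Y)$ is an $\R^{e+k}$-valued controlled path with respect to $X$ with Gubinelli derivative $(\alpha', Y')$, so that $(F,F')$ arises by composing the vector field $f \in C^3_b(\R^{e+k}; \cL(\R^d;\R^k))$ with this concatenated path in the sense of Lemma~\ref{lemma: transformation of controlled path}.

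For part~(i), Assumption~\ref{assumption} for $(F,F')$ follows from Lemma~\ref{lemma: transformation of controlled path} applied to $f$ and $(\alpha, Y)$: using the trivial split
\begin{equation*}
\|(\alpha,Y), (\alpha', Y')\|_{X,p,[s,t]} \leq \|\alpha,\alpha'\|_{X,p,[s,t]} + \|Y,Y'\|_{X,p,[s,t]},
\end{equation*}
the growth and Lipschitz bounds provided by that lemma specialize to Assumption~\ref{assumption}, with $C_F$ depending on $\|f\|_{C^2_b}$, $\|\alpha\|_p$ and $\|\alpha,\alpha'\|_{X,p}$, and $\C$ depending additionally on $\|f\|_{C^3_b}$, $K$ and $\|X\|_p$. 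Theorem~\ref{thm: existence and uniqueness} then provides the unique solution together with the a priori bound, the claimed dependence of $K_p$ following from $|F_0(y)| \leq \|f\|_{C^2_b}$ and the explicit form of these constants.

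For part~(ii), the same check gives that $(\tF,\tF')$ also satisfies Assumption~\ref{assumption}. To apply Theorem~\ref{thm: continuity} with $\tbX = \bX$ (so that $\|\bX;\tbX\|_p = 0$), it remains to verify that $(F-\tF, F'-\tF')$ satisfies Assumption~\ref{assumption}~(i) with a constant bounded, up to multiplicative factors depending on $\|f\|_{C^3_b} \vee \|\tf\|_{C^3_b}$ and $\|\alpha,\alpha'\|_{X,p} \vee \|\tilde{\alpha},\tilde{\alpha}'\|_{X,p}$, by $\|f-\tf\|_{C^2_b} + |\alpha_0 - \tilde{\alpha}_0| + \|\alpha,\alpha';\tilde{\alpha},\tilde{\alpha}'\|_{X,p}$. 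This is obtained via the telescoping split
\begin{equation*}
f(\alpha,Y) - \tf(\tilde{\alpha},Y) = (f-\tf)(\alpha,Y) + \bigl(\tf(\alpha,Y) - \tf(\tilde{\alpha},Y)\bigr),
\end{equation*}
and analogously for the Gubinelli derivatives: the first summand is controlled by applying Lemma~\ref{lemma: transformation of controlled path} to $f-\tf \in C^3_b$, while the second requires a Lipschitz-in-$\alpha$ variant following the same second-order Taylor expansion argument as in the proof of Lemma~\ref{lemma: transformation of controlled path}. The initial term $|F_0(y) - \tF_0(\ty)|$ appearing in Theorem~\ref{thm: continuity} is similarly bounded by $\|f-\tf\|_\infty + \|\tf\|_{C^1_b}(|\alpha_0 - \tilde{\alpha}_0| + |y_0 - \ty_0|)$, yielding exactly the right-hand side of the asserted estimate.

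The main technical obstacle is the Lipschitz-in-$\alpha$ estimate for the second summand above: one must show that both $\tf(\alpha, Y) - \tf(\tilde{\alpha}, Y)$ and its Gubinelli derivative $\D \tf(\alpha,Y)(\alpha',Y') - \D\tf(\tilde{\alpha},Y)(\tilde{\alpha}',Y')$ are small in $p$- and $\p$-variation, uniformly over $(Y,Y')$ with $\|Y,Y'\|_{X,p} \leq K$, whenever $(\alpha,\alpha')$ and $(\tilde{\alpha},\tilde{\alpha}')$ are close. This is routine but slightly tedious, amounting to Taylor-expanding $\tf$ to second order and tracking the remainder terms analogously to the proof of Lemma~\ref{lemma: transformation of controlled path}.
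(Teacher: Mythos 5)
Your proof is correct and follows essentially the same route as the paper: view $(\alpha,Y)$ as a single controlled path, apply the composition Lemma~\ref{lemma: transformation of controlled path} to the enlarged path to verify Assumption~\ref{assumption} (this is exactly the content of Lemma~\ref{lemma: transformation of controlled path with control}, which the paper uses as an intermediate step), and then for part~(ii) use the telescoping decomposition $f(\alpha,Y)-\tf(\tilde\alpha,Y) = (f-\tf)(\alpha,Y) + (\tf(\alpha,Y)-\tf(\tilde\alpha,Y))$ to reduce the difference functional to a small-vector-field piece plus a stability-in-$\alpha$ piece, exactly as in the paper. The only cosmetic difference is that you carry out the Taylor-expansion estimate for the stability-in-$\alpha$ piece by hand, where the paper defers to the corresponding lemmas of Friz--Zhang; the substance is identical.
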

	
	In order to apply the existence and uniqueness result presented in Theorem~\ref{thm: existence and uniqueness}, and also the continuity result in Theorem~\ref{thm: continuity}, we need to check that the vector field in the RDE~\eqref{eq: controlled RDE} satisfies Assumption~\ref{assumption: Lipschitz}. This is the content of the next lemma. Note that it will be sufficient to check Assumption~\ref{assumption: Lipschitz}~(ii) for $X = \tX$, since we do not establish stability results with respect to the driving rough path in this subsection. (This also applies to the following subsections.)
	
	\begin{lemma}\label{lemma: transformation of controlled path with control}
		Let $f \in C^3_b(\R^{k+e};\cL(\R^d;\R^k))$ and $X \in D^p([0,T];\R^d)$ for $p \in (2,3)$. Further, let $(\alpha,\alpha') \in \cV^p_X([0,T];\R^e)$. The non-anticipative functional
		\begin{align*}
			&(F,F')\colon \cV^p_X([0,T];\R^k) \to \cV^p_X([0,T];\cL(\R^d;\R^k)), \\
			&(F(Y),F'(Y,Y')) := (f((\alpha,Y)), \D f((\alpha,Y)) (\alpha',Y')),
		\end{align*}
		satisfies Assumption~\ref{assumption: Lipschitz} given $X = \tX$.
	\end{lemma}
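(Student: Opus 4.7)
The strategy is to reduce to Lemma~\ref{lemma: transformation of controlled path} by viewing the augmented path $Z := (\alpha,Y)$ with Gubinelli derivative $Z' := (\alpha',Y')$ as an $\R^{k+e}$-valued controlled path with respect to $X$. Concretely, if we set $g(z) := f(z)$ regarded as an element of $C^3_b(\R^{k+e};\cL(\R^d;\R^k))$, then $(F(Y),F'(Y,Y')) = (g(Z),\D g(Z)Z')$, and Lemma~\ref{lemma: transformation of controlled path} applied to $g$ and $Z$ yields the required growth and Lipschitz bounds in terms of the norms of $Z$. So the work reduces to translating bounds on $\|Z\|_p$ and $\|Z,Z'\|_{X,p}$ into bounds on the corresponding norms of $Y$, with the extra dependence on $\alpha$ absorbed into the constants.

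For the growth part, I first observe that $\|Z\|_{p,[s,t]} \leq \|\alpha\|_{p,[s,t]} + \|Y\|_{p,[s,t]}$ and $\|Z,Z'\|_{X,p,[s,t]} \leq \|\alpha,\alpha'\|_{X,p,[s,t]} + \|Y,Y'\|_{X,p,[s,t]}$. Combining the first bound with the $p$-variation estimate from Lemma~\ref{lemma: transformation of controlled path} gives $\|F(Y)\|_{p,[s,t]} \lesssim \|f\|_{C^2_b}(\|\alpha\|_{p,[s,t]} + \|Y\|_{p,[s,t]}) \leq C_F(1+\|Y\|_{p,[s,t]})$ with $C_F$ depending on $\|f\|_{C^2_b}$ and $\|\alpha\|_p$. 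Combining the second with the controlled-path growth bound from Lemma~\ref{lemma: transformation of controlled path}, the factor $(1+\|Z,Z'\|_{X,p,[s,t]})^2$ splits as $(1+\|\alpha,\alpha'\|_{X,p})^2 (1+\|Y,Y'\|_{X,p,[s,t]})^2$ up to constants, giving Assumption~\ref{assumption}(i) with $C_F$ additionally depending on $\|\alpha,\alpha'\|_{X,p}$.

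For the Lipschitz part with $X = \tX$, I consider two controlled paths $(Y,Y'),(\tY,\tY') \in \cV^p_X$ and form $Z = (\alpha,Y)$, $\tilde Z = (\alpha,\tY)$, both in $\cV^p_X([0,T];\R^{k+e})$ with the same first component $\alpha$. Then $Z - \tilde Z = (0,Y-\tY)$ and $Z' - \tilde Z' = (0, Y' - \tY')$, so $|Z_s - \tilde Z_s| = |Y_s - \tY_s|$, $\|Z - \tilde Z\|_{p,[s,t]} = \|Y - \tY\|_{p,[s,t]}$, and $\|Z,Z';\tilde Z,\tilde Z'\|_{X,p,[s,t]} = \|Y,Y';\tY,\tY'\|_{X,p,[s,t]}$. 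Applying the Lipschitz conclusion of Lemma~\ref{lemma: transformation of controlled path} to $g$ and to $Z,\tilde Z$ under the local bound $\|Z,Z'\|_{X,p,[s,t]}, \|\tilde Z,\tilde Z'\|_{X,p,[s,t]} \leq K + \|\alpha,\alpha'\|_{X,p} =: K'$, yields Assumption~\ref{assumption}(ii) with a constant depending on $p$, $\|f\|_{C^3_b}$, $K$, $\|\alpha,\alpha'\|_{X,p}$ and $\|X\|_p$.

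The only mildly delicate point is the bookkeeping of how the quadratic factor $(1 + \|Z,Z'\|_{X,p})^2$ separates into a constant absorbing $\alpha$ and a factor $(1 + \|Y,Y'\|_{X,p})^2$; this is standard since $(1+a+b)^2 \leq 2(1+a)^2(1+b)^2$, so no genuine obstacle arises. Everything else is a direct translation through the projection $z \mapsto (\alpha, z)$, and the statement follows.
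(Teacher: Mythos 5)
Your proof is correct and follows essentially the same approach as the paper: reduce to Lemma~\ref{lemma: transformation of controlled path} by treating the augmented pair $((\alpha,Y),(\alpha',Y'))$ as an $\R^{k+e}$-valued controlled path, observe that the $\alpha$-contribution to the norms is a fixed constant, and exploit that $(\alpha,Y)-(\alpha,\tY)=(0,Y-\tY)$ so the Lipschitz bounds pass through unchanged. The only cosmetic difference is your use of $(1+a+b)^2\le 2(1+a)^2(1+b)^2$; in fact $1+a+b\le(1+a)(1+b)$ for $a,b\ge 0$, so the factor $2$ is unnecessary, but this is immaterial since the constants are only tracked up to $p$-dependent multiples.
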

	
	\begin{proof}
		Fix $(s,t) \in \Delta_T$ and let $(Y,Y') \in \cV^p_X$. It is clear that $|F_t(Y)| \leq \|f\|_{C^2_b}$, and we note that
		\begin{align*}
			&|F_{t-,t}(Y)| \leq \|F(Y)\|_{p,[s,t]} = \|f((\alpha,Y))\|_{p,[s,t]} \\
			&\quad \leq \|f\|_{C^2_b} \|(\alpha,Y)\|_{p,[s,t]} \\
			&\quad \leq \|f\|_{C^2_b} (1 + \|\alpha\|_{p,[s,t]}) (1 + \|Y\||_{p,[s,t]}),
		\end{align*}
		and it holds that $\|Y\|_{p,[s,t]} \leq \|Y\|_{p,[s,t)} + |Y_{t-,t}|$ as well as $\|Y\|_{p,[s,t]} \leq C_p(1 + (|Y'_s| + \|Y'\|_{p,[s,t]}) \cdot \|X\|_{p,[s,t]} + \|R^Y\|_{\p,[s,t]})$. Applying Lemma~\ref{lemma: transformation of controlled path} to the enlarged controlled path $((\alpha,Y),(\alpha',Y'))$, it follows that
		\begin{align*}
			&\|F(Y),F'(Y,Y')\|_{X,p,[s,t]}\\
			&\quad \lesssim \|f\|_{C^2_b} (1 + |(\alpha'_s,Y'_s)| + \|(\alpha',Y')\|_{p,[s,t]} + \|R^{(\alpha,Y)}\|_{\p,[s,t]})^2 (1 + \|X\|_{p,[s,t]})^2 \\
			&\quad \lesssim \|f\|_{C^2_b} (1 + \|\alpha,\alpha'\|_{X,p})^2 (1 + \|Y,Y'\|_{X,p,[s,t]})^2 (1 + \|X\|_{p,[s,t]})^2.
		\end{align*}
		The growth conditions thus hold with constant $C_F = \|f\|_{C^2_b}$ up to a multiplicative constant, which depends on $p$, $\|\alpha\|_p$ and $\|\alpha,\alpha'\|_{X,p}$.
		
		Proceeding as in the proof of Lemma~\ref{lemma: transformation of controlled path}, we can show the Lipschitz conditions, observing that
		\begin{equation*}
			|(\alpha,Y)_s - (\alpha,\tY)_s| + \|(\alpha,Y) - (\alpha,\tY)\|_{p,[s,t]} = |Y_s - \tY_s| + \|Y - \tY\|_{p,[s,t]}
		\end{equation*}
		and
		\begin{equation*}
			\|(\alpha,Y),(\alpha',Y');(\alpha,\tY),(\alpha',\tY')\|_{X,p} = \|Y,Y';\tY,\tY'\|_{X,p},
		\end{equation*}
		and similarly for each summand of the norm, so the Lipschitz conditions hold with constant $C_{F,K,X,X}$, which depends on $p$, $\|f\|_{C^3_b}$, $K$, for $K > 0$, $\|\alpha,\alpha'\|_{X,p}$, and $\|X\|_p$.
	\end{proof}
	
	\begin{proof}[Proof of Corollary~\ref{cor: controlled RDE}]
		\emph{(i)} The existence and uniqueness of the solution follows immediately from Lemma~\ref{lemma: transformation of controlled path with control} and Theorem~\ref{thm: existence and uniqueness}. For the a priori estimate, note that $C_F \lesssim_p \|f\|_{C^2_b} (1 + \|\alpha\|_p + \|\alpha,\alpha'\|_{X,p})^2$.
		
		\emph{(ii)} To apply the continuity result presented in Theorem~\ref{thm: continuity}, we need to ensure that the functionals satisfy the required assumptions. For $(F,F'), (\tF,\tF')$, this is given in Lemma~\ref{lemma: transformation of controlled path with control}. Analogously, since $f - \tf \in C^3_b$, we note that for
		\begin{equation*}
			(Y,Y') \mapsto ((f-\tf)((\alpha,Y)), \D (f-\tf) ((\alpha,Y)) (\alpha',Y'))
		\end{equation*}
		the growth conditions hold with constant equal to $\|f - \tf\|_{C^2_b}$ up to a multiplicative constant which depends on $p$, $\|\alpha\|_p$ and $\|\alpha,\alpha'\|_{X,p}$. Further, it follows from the proofs of~\cite[Lemma~3.1 and Lemma~3.5]{Friz2018} that the growth conditions hold for
		\begin{equation*}
			(Y,Y') \mapsto (f((\alpha,Y)) - f((\tilde{\alpha},Y)), \D f((\alpha,Y)) (\alpha',Y') - \D f((\tilde{\alpha},Y)) (\tilde{\alpha}',Y'))
		\end{equation*}
		with constant equal to $|\alpha_0 - \tilde{\alpha}_0| + \|\alpha - \tilde{\alpha}\|_p + \|\alpha,\alpha';\tilde{\alpha},\tilde{\alpha}'\|_{X,p}$ up to a multiplicative constant which depends on $p$, $\|f\|_{C^3_b}$, $\|\alpha\|_p$, $\|\tilde{\alpha}\|_p$, $\|\alpha,\alpha'\|_{X,p}$, $\|\tilde{\alpha},\tilde{\alpha}'\|_{X,p}$.
		
		This implies that
		\begin{align*}
			&(F-\tF, F'-\tF') \colon \cV^p_X([0,T];\R^k) \to \cV^p_X([0,T];\cL(\R^d;\R^k)), \\
			&((F-\tF)(Y),(F-\tF)'(Y,Y')) \\
			&\quad:= (f((\alpha,Y)) - \tf((\tilde{\alpha},Y)), \D f((\alpha,Y)) (\alpha',Y')- \D \tf((\tilde{\alpha},Y)) (\tilde{\alpha}',Y')),
		\end{align*}
		satisfies the corresponding estimates in Assumption~\ref{assumption: Lipschitz} with
		\begin{equation*}
			C_{F-\tF} = \|f - \tf\|_{C^2_b} + |\alpha_0 - \tilde{\alpha}_0| + \|\alpha - \tilde{\alpha}\|_p + \|\alpha,\alpha';\tilde{\alpha},\tilde{\alpha}'\|_{X,p}
		\end{equation*}
		up to a multiplicative constant, which depends on $p$, $\|f\|_{C^3_b} \vee \|\tf\|_{C^3_b}$, $\|\alpha\|_p$, $\|\tilde{\alpha}\|_p$, $\|\alpha,\alpha'\|_{X,p}$, $\|\tilde{\alpha},\tilde{\alpha}'\|_{X,p}$.
	\end{proof}
	
	\subsection{RDEs with discrete time dependence}
	
	Let us consider the rough differential equation with discrete time dependence
	\begin{equation}\label{eq: RDE with discrete time dependence}
		Y_t = y_t + \int_0^t f(Y_s, Y_{s \wedge r_1}, \ldots Y_{s \wedge r_\ell}) \dd \bX_s, \quad t \in [0,T],
	\end{equation}
	where $\bX \in \cD^p([0,T];\R^d)$ for $p \in (2,3)$, $(y,y') \in \cV^p_X([0,T];\R^k)$, $f \in C^3_b(\R^{k(\ell+1)};\cL(\R^d;\R^k))$, and $r_1 < \dots < r_\ell$ be given time points in $[0,T]$, with $\ell \in \N$. For continuous rough paths as driving signals, the existence (without uniqueness) of a solution to the RDE~\eqref{eq: RDE with discrete time dependence} was proven in~\cite[Example~4.2 and Theorem~4.4]{Ananova2023}. The next proposition provides an existence, uniqueness and continuity result for RDEs with discrete time dependence driven by c{\`a}dl{\`a}g $p$-rough paths.
	
	\medskip
	
	\begin{proposition}~
		\begin{itemize}
			\item[(i)] In the above setting, there exists a unique solution to the RDE with discrete time dependence ~\eqref{eq: RDE with discrete time dependence}. Moreover, there exists a componentwise non-decreasing function $K_p \colon \N \times [0, \infty)^3 \to [0,\infty)$ such that
			\begin{equation*}
				\|Y,Y'\|_{X,p} \leq K_p( \ell, \|f\|_{C^2_b}, \|y,y'\|_{X,p}, \|\bX\|_p).
			\end{equation*}
			\item[(ii)] Let $(Y,Y') \in \cV^p_X([0,T];\R^k)$ be the unique solution to the RDE with time discrete dependence~\eqref{eq: RDE with discrete time dependence}. Moreover, let $(\ty,\ty') \in \cV^p_X([0,T];\R^k)$ with corresponding solution $(\tY,\tY') \in \cV^p_X([0,T];\R^k)$, and suppose that $\|Y,Y'\|_{X,p}, \|\tY,\tY'\|_{X,p} \leq K$, for some ${K > 0}$. Then, we have the estimate
			\begin{equation*}
				|Y_0 - \tY_0| + \|Y,Y';\tY,\tY'\|_{X,p} \lesssim |y_0 - \ty_0| + \|y,y';\ty,\ty'\|_{X,p},
			\end{equation*}
			where the implicit multiplicative constant depends on $p$, $\ell$, $\|f\|_{C^3_b}$, $K$, and $\|\bX\|_p$.
		\end{itemize}
	\end{proposition}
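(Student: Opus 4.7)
The natural candidate non-anticipative functional $F_t(Y) := f(Y_t, Y_{t \wedge r_1}, \ldots, Y_{t \wedge r_\ell})$ does \emph{not} verify Assumption~\ref{assumption}~(ii) on arbitrary sub-intervals $[s,t]$: for $s > r_j$, the frozen value $Y_{r_j}$ enters as an argument of $f$, yet $|Y_{r_j} - \tY_{r_j}|$ cannot in general be bounded by $|Y_s - \tY_s| + \|Y - \tY\|_{p,[s,t]}$. For this reason, my plan is to bypass a direct application of Theorem~\ref{thm: existence and uniqueness} and instead reduce to iterative applications of Corollary~\ref{cor: classical RDE} on the sub-intervals determined by $r_0 := 0 < r_1 < \cdots < r_\ell < r_{\ell+1} := T$.

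\textbf{Step 1 (Sub-interval reduction).} For $u \in [r_j, r_{j+1}]$, one has $u \wedge r_i = r_i$ if $i \leq j$ and $u \wedge r_i = u$ if $i > j$. Hence on $[r_j, r_{j+1}]$ the equation collapses to the classical RDE
\begin{equation*}
Y_s = y^{(j)}_s + \int_{r_j}^s g^{(j)}_{z_1, \ldots, z_j}(Y_u) \dd \bX_u, \quad s \in [r_j, r_{j+1}],
\end{equation*}
where $g^{(j)}_{z_1, \ldots, z_j}(y) := f(y, z_1, \ldots, z_j, y, \ldots, y) \in C^3_b(\R^k;\cL(\R^d;\R^k))$ with norm bounded by a constant depending only on $\ell$ and $\|f\|_{C^3_b}$, uniformly in the parameters $z_i \in \R^k$ (eventually taken to be $Y_{r_i}$), and $y^{(j)}_s := y_s + \int_0^{r_j} f(Y_u, Y_{u \wedge r_1}, \ldots) \dd \bX_u$. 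Since the additional integral is constant in $s \in [r_j, r_{j+1}]$, it does not affect variations, so $\|y^{(j)}, (y^{(j)})'\|_{X,p,[r_j,r_{j+1}]} = \|y,y'\|_{X,p,[r_j,r_{j+1}]}$.

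\textbf{Step 2 (Iterative construction).} I construct $(Y, Y')$ on $[0, T]$ inductively in $j$. On $[0, r_1]$ the equation is a classical RFDE with vector field $g^{(0)}(y) = f(y, y, \ldots, y) \in C^3_b$, so existence and uniqueness of $(Y,Y') \in \cV^p_X([0,r_1];\R^k)$ follow from Lemma~\ref{lemma: transformation of controlled path} together with Theorem~\ref{thm: existence and uniqueness}. Given the solution on $[0, r_j]$, the values $Y_{r_1}, \ldots, Y_{r_j}$ are determined, and a further application of the same result on $[r_j, r_{j+1}]$ to the classical RDE with vector field $g^{(j)}_{Y_{r_1}, \ldots, Y_{r_j}}$ yields the unique extension. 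Pasting the sub-interval solutions (with care at the interface jumps of $\bX$ at the $r_j$) produces the unique global solution.

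\textbf{Step 3 (A priori bound and continuity).} For (i), Theorem~\ref{thm: existence and uniqueness} applied on each sub-interval gives
\begin{equation*}
\|Y,Y'\|_{X,p,[r_j,r_{j+1}]} \leq K_p\bigl(|g^{(j)}_{Y_{r_1},\ldots,Y_{r_j}}(y^{(j)}_{r_j})|, \|y,y'\|_{X,p,[r_j,r_{j+1}]}, C_{g^{(j)}}, \|\bX\|_{p,[r_j,r_{j+1}]}\bigr),
\end{equation*}
where both $|g^{(j)}|_\infty$ and $C_{g^{(j)}}$ are bounded by $\|f\|_{C^2_b}$ times a constant depending only on $\ell$. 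Combining the contributions over the $\ell + 1$ sub-intervals delivers the claimed global bound. For (ii), on each $[r_j, r_{j+1}]$ the solutions $Y, \tY$ satisfy classical RDEs with vector fields $g^{(j)}_{Y_{r_1}, \ldots, Y_{r_j}}$ and $g^{(j)}_{\tY_{r_1}, \ldots, \tY_{r_j}}$, respectively, and $f \in C^3_b$ yields
\begin{equation*}
\|g^{(j)}_{Y_{r_1},\ldots,Y_{r_j}} - g^{(j)}_{\tY_{r_1},\ldots,\tY_{r_j}}\|_{C^2_b} \lesssim \|f\|_{C^3_b} \sum_{i=1}^j |Y_{r_i} - \tY_{r_i}|.
\end{equation*}
Corollary~\ref{cor: classical RDE}~(ii) then provides a local Lipschitz estimate on each sub-interval featuring this term plus $|Y_{r_j} - \tY_{r_j}|$ and $\|y,y';\ty,\ty'\|_{X,p,[r_j,r_{j+1}]}$; iterating inductively in $j$ propagates control of the interface differences and yields the global estimate.

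\textbf{The main obstacle} is the bookkeeping in Step 3: one must verify that the constants accumulated through the $\ell + 1$ iterations of Corollary~\ref{cor: classical RDE} compound into the claimed dependence structure (a componentwise non-decreasing function of the stated arguments, with dependence on $\ell$ entering through the number of iterations and the polynomial $\ell$-factors in $\|g^{(j)}\|_{C^3_b}$). A secondary technical point is the interface handling at the $r_j$: one must account for possible jumps of $\bX$ at these times so that the right-continuous sub-interval solutions glue consistently with the jump behavior prescribed by the rough integral, analogous to the pasting argument in Step~2 of the proof of Theorem~\ref{thm: existence and uniqueness}.
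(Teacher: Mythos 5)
Your proposal follows the same sub-interval pasting strategy as the paper's proof: decompose $[0,T]$ at $r_1 < \cdots < r_\ell$, freeze the already-determined values $Y_{r_1}, \ldots, Y_{r_j}$ on each $[r_j,r_{j+1}]$, solve a standard RDE there, and glue at the interfaces while accounting for jumps of $\bX$ at the $r_j$. The only presentational difference is that the paper packages the frozen values as a constant controlled path $(\alpha_j,\alpha_j')$ and invokes the controlled-RDE machinery (Lemma~\ref{lemma: transformation of controlled path with control} and the mechanism behind Corollary~\ref{cor: controlled RDE}), whereas you fold them into a parametrized vector field $g^{(j)}_{z_1,\ldots,z_j}$ and invoke the classical-RDE results; for a constant controlled path these viewpoints coincide. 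One small caveat worth making explicit: Corollary~\ref{cor: classical RDE} as stated takes a point initial condition $y_0 \in \R^k$, not a controlled path $(y^{(j)},(y^{(j)})')$, so on each sub-interval you should cite Theorem~\ref{thm: existence and uniqueness} and Theorem~\ref{thm: continuity} directly (with $F=g^{(j)}_{Y_{r_1},\ldots,Y_{r_j}}(\cdot)$, $\tF=g^{(j)}_{\tY_{r_1},\ldots,\tY_{r_j}}(\cdot)$ and the $C_{F-\tF}$ mechanism), as you do on $[0,r_1]$ --- indeed your displayed local estimate for (ii) already contains the $\|y,y';\ty,\ty'\|_{X,p,[r_j,r_{j+1}]}$ term that Corollary~\ref{cor: classical RDE}(ii) does not supply.
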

	
	\begin{proof}
		\emph{(i)} On the interval $[0,r_1]$, we extend the vector field $f$ to map into the space of controlled paths by setting
		\begin{equation*}
			(F,F') \colon \cV^p_X([0,r_1];\R^k) \to \cV^p_X([0,r_1]; \cL(\R^d;\R^k)), \quad (F(Y),F'(Y,Y')) := (f(\bar{Y}), \D f(\bar{Y}) \bar{Y}'),
		\end{equation*}
		with $(\bar{Y}, \bar{Y}') = ((Y, Y, \ldots, Y),(Y', Y', \ldots, Y')) \in \cV^p_X([0,T];\R^{k(\ell+1)})$.	It follows analogously to Lemma~\ref{lemma: transformation of controlled path} that the functional satisfies Assumption~\ref{assumption: Lipschitz} (i) and (ii) with constants depending additionally on $\ell$, that is, $C_F = \|f\|_{C^2_b}$ up to a multiplicative constant, which depends on $p$ and $\ell$, and $C_{F,K,X,X}$ depends on $p$, $\ell$, $\|f\|_{C^3_b}$, $K$, for $K > 0$, and $\|X\|_p$. Note that it is sufficient to check Assumption~\ref{assumption: Lipschitz}~(ii) for $X = \tX$. We can thus apply Theorem~\ref{thm: existence and uniqueness} to show that there exists a unique solution to the RDE~\eqref{eq: RDE with discrete time dependence} on the interval $[0,r_1)$. We now aim to solve the RDE~\eqref{eq: RDE with discrete time dependence} iteratively on the subintervals $[r_i,r_{i+1})$, $i = 1, \ldots, \ell$, with $r_{\ell+1} = T$. Given the solution on $[r_{i-1},r_i)$, with $r_0 = 0$, the value $Y_{r_i}$ is determined by the jump of $\bX$ at time $r_i$. We therefore consider $(y_i,y_i') \in \cV^p_X([r_i,r_{i+1}]; \R^k)$, where
		\begin{equation*}
			y_{i;t} = y_t + Y_{r_i-} - y_{r_i-} + F_{r_i-}(Y) X_{r_i-,r_i} + F'_{r_i-}(Y,Y') \X_{r_i-,r_i}, \quad t \in [r_i,r_{i+1}],
		\end{equation*}
		for every $i = 1, \ldots, \ell$, and $(\alpha_i,\alpha_i') \in \cV^p_X([r_i,r_{i+1}];\R^{ik})$ be a fixed controlled path, with $\alpha_{i,t} = (Y_{r_1}, \ldots, Y_{r_i})$, $t \in [r_i,r_{i+1})$. We set
		\begin{align*}
			&(F,F') \colon \cV^p_X([r_i,r_{i+1}];\R^k) \to \cV^p_X([r_i,r_{i+1}];\cL(\R^d;\R^k)), \\
			&(F(Y),F'(Y,Y')) = (f((Y, \alpha_i, Y, \ldots, Y)), \D f((Y, \alpha_i, Y, \ldots, Y)) (Y', \alpha_i', Y', \ldots, Y'))
			&\intertext{for $i = 1, \ldots, \ell - 1$, and}
			&(F(Y),F'(Y,Y')) = (f((Y,\alpha_\ell)), \D f((Y,\alpha_\ell)) (Y', \alpha_\ell')),
		\end{align*}
		for $i = \ell$. Analogously to Lemma~\ref{lemma: transformation of controlled path with control}, we can show that the functional satisfies Assumption~\ref{assumption: Lipschitz}~(i) and (ii) with constants depending additionally on $\ell$, that is $C_F = \|f\|_{C^2_b}$ up to a multiplicative constant, which depends on $p$ and $\ell$, see the definition of $(\alpha_i,\alpha_i') \in \cV^p_X([r_i,r_{i+1}])$, and $C_{F,K,X,X}$ depends on $p$, $\ell$, $\|f\|_{C^3_b}$, $K$, for $K > 0$, and $\|X\|_p$. Note that it is again sufficient to check Assumption~\ref{assumption: Lipschitz}~(ii) for $X = \tX$. We can thus again apply Theorem~\ref{thm: existence and uniqueness} to show that there exists a unique solution to the RDE~\eqref{eq: RDE with discrete time dependence} on the interval $[r_i,r_{i+1})$, that is
		\begin{equation*}
			Y_t = y_{i;t} + \int_{r_i}^t F_s(Y) \dd \bX_s, \quad t \in [r_i,r_{i+1}),
		\end{equation*}
		for every $i = 1, \ldots, \ell$. Then, by pasting the solutions on each of these subintervals together, we obtain a unique global solution $Y$, which holds over the entire interval $[0,T]$.
		
		The a priori estimate follows by iteratively combining the a priori estimate of Corollary~\ref{cor: controlled RDE}, noting that $\alpha_{i,t} = (Y_{r_1}, \ldots, Y_{r_i})$, $t \in [r_i,r_{i+1})$, for $i = 1, \ldots, \ell$.
		
		\emph{(ii)} \emph{Local estimate on $[0,r_1]$.} To apply the continuity result presented in Theorem~\ref{thm: continuity} on the subinterval $[0,r_1]$, we need to ensure that the functionals satisfy the required assumptions.
		
		For $(F,F')$, this is shown in the proof of~(i), and as we aim to obtain continuity of the solution map as a function of the initial condition $(y,y')$, not the vector field $f$, on the interval $[0,r_1]$ we may consider $(F,F') = (\tF,\tF')$, so, $(F-\tF,F'-\tF') = 0$. Theorem~\ref{thm: continuity} now gives that
		\begin{equation*}
			\|Y' - \tY'\|_{p,[0,r_1]} + \|R^Y - R^{\tY}\|_{p,[0,r_1]} \lesssim |y_0 - \ty_0| + \|y,y';\ty,\ty'\|_{X,p,[0,r_1]},
		\end{equation*}
		where the implicit multiplicative constant depends on $p$, $\ell$, $\|f\|_{C^3_b}$, $K$, and $\|\bX\|_p$.
		
		\emph{Local estimate on $[r_i,r_{i+1}]$, $i = 1, \ldots, \ell$.} To apply the continuity result presented in Theorem~\ref{thm: continuity}, we need to ensure that the functionals satisfy the required assumptions. For $(F,F'), (\tF,\tF')$, this is shown in the proof of~(i), and for $(F-\tF,F'-\tF')$, in the proof of Corollary~\ref{cor: controlled RDE}~(ii), where the constant $C_{F-\tF}$ depends additionally on $\ell$. Theorem~\ref{thm: continuity} then implies that
		\begin{equation*}
			\|Y' - \tY'\|_{p,[r_i,r_{i+1}]} + \|R^Y - R^{\tY}\|_{p,[r_i,r_{i+1}]} \lesssim |Y_{r_i} - \tY_{r_i}| + \|y,y';\ty,\ty'\|_{X,p,[r_i,r_{i+1}]},
		\end{equation*}
		where the implicit multiplicative depends on $p$, $\|f\|_{C^3_b}$, $K$ and $\|\bX\|_p$, see the definition of $(\alpha_i,\alpha_i'), (\tilde{\alpha}_i,\tilde{\alpha}_i') \in \cV^p_X([r_i,r_{i+1}];\R^{ik})$, $(y_i,y_i'), (\ty_i,\ty_i') \in \cV^p_X([r_i,r_{i+1}];\R^k)$.
		
		\emph{Global estimate.} Using the methods of the proof of Theorem~\ref{thm: continuity}, and applying the local estimates on the subintervals $[r_i,r_{i+1}]$, $i = 0, 1, \ldots, \ell$, one can then derive that
		\begin{equation*}
			\|Y' - \tY'\|_p + \|R^Y - R^{\tY}\|_\p \lesssim |y_0 - \ty_0| + \|y,y';\ty,\ty'\|_{X,p},
		\end{equation*}
		which implies the estimate.
	\end{proof}
	
	\subsection{RDEs with constant delay}\label{sec: RDEs with constant delay}
	
	Maybe the most prominent example of rough functional differential equations are RDEs with constant delay, cf.~e.g.~\cite{Ferrante2006, Neuenkirch2008, Besalu2014, Chhaibi2020, Besalu2020}. In the present subsection we consider the delayed rough differential equation
	\begin{equation}\label{eq: RDE with constant delay}
		Y_t = y_t + \int_0^t f(Y_s, Y_{s-r_1}, \ldots, Y_{s-r_\ell}) \dd \bX_s, \quad t \in [0,T],
	\end{equation}
	where $\bX \in \cD^p([0,T];\R^d)$ for $p \in (2,3)$, $(y,y') \in \cV^p_X([0,T];\R^k)$, $f \in C^3_b(\R^{k(\ell+1)};\cL(\R^d;\R^k))$, and constant delays $0 < r_1 < \dots < r_\ell$ with $\ell \in \N$. To give a rigorous mathematical meaning to the RDE~\eqref{eq: RDE with constant delay}, we follow the approach of Neuenkirch, Nourdin and Tindel~\cite{Neuenkirch2008}: we assume that the driving rough path~$\bX$ is of the form
	\begin{equation*}
		X_t = (Z_t, Z_{t-r_1}, \ldots, Z_{t-r_\ell}), \quad t \in [0,T],
	\end{equation*}
	for a path $Z \in D^p([-r_\ell,T];\R^e)$ with $d = e(\ell+1)$. We extend the vector field $f$ to map into the space of controlled paths by setting
	\begin{equation*}
		(F(Y),F'(Y,Y')) := (f((Y,\alpha)), \D f((Y,\alpha)) (Y',\alpha')),
	\end{equation*}
	for $(\alpha,\alpha') \in \cV^p_X([0,T];\R^{k\ell})$, where
	\begin{equation}\label{eq: fixed extension}
		\alpha = (\alpha_1, \ldots, \alpha_\ell) \quad \text{with} \quad \alpha_{j,t} :=
		\begin{cases}
			Y_{t-r_j}, \quad &t \in [r_j,T] \\
			Y_{j;t}, \quad &t \in [0,r_j)
		\end{cases},
	\end{equation}
	for fixed controlled paths $(Y_j,Y_j') \in \cV^p_{Z_{\cdot-r_j}}([0,T];\R^k)$ and every $j = 1, \ldots, \ell$. This includes the natural case $Y_j = \xi_{\cdot-r_j}$ for an initial path $\xi \in \cV^p_Z([-r_\ell,T]; \R^k)$.
	
	Note that the postulated form of the rough path $\bX$ is essential to ensure the well-posedness of the rough integral appearing in~\eqref{eq: RDE with constant delay} and the extension of the solution $Y$ to the interval $[-r_\ell,0]$ is a standard and necessary way to give a meaning to $f(Y_s, Y_{s-r_1}, \ldots, Y_{s-r_\ell})$ on the entire interval $[0,T]$.
	
	For delayed RDEs of the form~\eqref{eq: RDE with constant delay} driven by $\alpha$-H\"older continuous rough paths, existence, uniqueness and continuity of the It\^o--Lyons map were first proven in~\cite{Neuenkirch2008} for $\alpha \in (\frac{1}{3},\frac{1}{2})$. These results were extended in~\cite{Tindel2012} to $\alpha$-H{\"o}lder continuous rough paths for $\alpha \in (\frac{1}{4},\frac{1}{3})$. A paracontrolled distribution approach to RDEs with constant delay can be found in~\cite{Promel2021}. Based on the general results of Section~\ref{sec: RFDE}, we can derive the follow proposition.
	
	\begin{proposition}\label{prop: RDE with constant delay}~
		\begin{itemize}
			\item[(i)] In the above setting, there exists a unique solution to the delayed RDE~\eqref{eq: RDE with constant delay}. Moreover, there exists a componentwise non-decreasing function  $K_p \colon \N \times [0, \infty)^4 \to [0,\infty)$ such that
			\begin{equation*}
				\|Y,Y'\|_{X,p} \leq K_p \Big( \ell, \|f\|_{C^2_b}, \|y,y'\|_{X,p}, \sum_{j=1}^{\ell} \|Y_j,Y_j'\|_{Z_{\cdot-r_j},p }, \|\bX\|_p \Big).
			\end{equation*}
			\item[(ii)] Let $(Y,Y') \in \cV^p_X([0,T];\R^k)$ be the unique solution to the rough differential equation with constant delay~\eqref{eq: RDE with constant delay}. Moreover, consider $(\ty,\ty') \in \cV^p_X([0,T];\R^k)$, and fixed controlled paths $(\tY_j,\tY_j') \in \cV^p_{Z_{\cdot-r_j}}([0,T];\R^k)$, $j = 1, \ldots, \ell$, with corresponding solution $(\tY,\tY') \in \cV^p_X([0,T];\R^k)$.
			
			Suppose that $\|Y,Y'\|_{X,p}, \|\tY,\tY'\|_{X,p} \leq K$, for some $K > 0$, and that $\|Y_j\|_p$, $\|\tY_j\|_p$, $\|Y_j,Y_j\|_{X,p}$, $\|\tY_j,\tY_j'\|_{X,p} \leq L$, for some $L > 0$, $j = 1, \ldots, \ell$. Then, we have the estimate
			\begin{align*}
				&|Y_0 - \tY_0| + \|Y,Y';\tY,\tY'\|_{X,p} \\
				&\quad \lesssim |y_0 - \ty_0| + \|y,y';\ty,\ty'\|_{X,p} + \sum_{j=1}^\ell |Y_{j;0} - \tY_{j;0}| + \sum_{j=1}^\ell \|Y_j,Y_j';\tY_j,\tY_j'\|_{Z_{\cdot-r_j},p},
			\end{align*}
			where the implicit multiplicative constant depends on $p$, $\ell$, $r_1$, $T$, $\|f\|_{C^3_b}$, $K$, $L$, and $\|\bX\|_p$.
		\end{itemize}
	\end{proposition}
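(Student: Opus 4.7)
The plan is to reduce the delayed RDE to a finite sequence of controlled RDEs in the sense of Corollary~\ref{cor: controlled RDE}, by solving iteratively on subintervals of length $r_1$. The key observation is that on the subinterval $[mr_1,(m+1)r_1 \wedge T]$ every delayed value $Y_{s-r_j}$ is already known: either it equals $Y_{j;s-r_j}$ through the fixed initial controlled path $(Y_j,Y_j') \in \cV^p_{Z_{\cdot-r_j}}$ (when $s - r_j < 0$), or it has been constructed on a previous subinterval. In both cases the shifted path $s \mapsto Y_{s-r_j}$ can be viewed as a controlled path in $\cV^p_X$: since $X = (Z,Z_{\cdot-r_1},\ldots,Z_{\cdot-r_\ell})$, any element of $\cV^p_{Z_{\cdot-r_j}}$ is canonically lifted to $\cV^p_X$ by placing its Gubinelli derivative in the $j$-th block of $\cL(\R^d;\R^k)$. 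Concatenating these for $j=1,\dots,\ell$ produces a fixed controlled path $(\alpha,\alpha') \in \cV^p_X$ that plays exactly the role of the control in the setting of Corollary~\ref{cor: controlled RDE}.

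On each subinterval I would define
\begin{equation*}
(F(Y),F'(Y,Y')) := \bigl(f((Y,\alpha)),\D f((Y,\alpha))(Y',\alpha')\bigr),
\end{equation*}
and verify Assumption~\ref{assumption} verbatim as in Lemma~\ref{lemma: transformation of controlled path with control}, with constants $C_F$ and $\C$ depending on $\|f\|_{C^2_b}$, $\|f\|_{C^3_b}$, $p$, $\ell$, $K$, $\|\alpha,\alpha'\|_{X,p}$ and $\|X\|_p$. Theorem~\ref{thm: existence and uniqueness} then yields a unique solution on the first subinterval $[0,r_1\wedge T]$, where $\alpha_j = Y_j$. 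For subsequent subintervals one updates $(\alpha,\alpha')$ by inserting the just-constructed piece of $Y$ into the relevant slots and re-applies Theorem~\ref{thm: existence and uniqueness}, with the initial condition at $mr_1$ adjusted to absorb the jump of $\bX$ exactly as in Step~2 of the proof of Theorem~\ref{thm: existence and uniqueness}. Since $T<\infty$ the procedure terminates after $\lceil T/r_1\rceil$ steps. The a priori bound in (i) then follows by iterating the a priori estimate of Corollary~\ref{cor: controlled RDE}, with $\|\alpha\|_p + \|\alpha,\alpha'\|_{X,p}$ on each subinterval controlled by $\sum_j \|Y_j,Y_j'\|_{Z_{\cdot-r_j},p}$ together with the bound on $\|Y,Y'\|_{X,p}$ on the preceding subintervals.

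For part~(ii) I would iteratively invoke Theorem~\ref{thm: continuity} on each subinterval $[mr_1,(m+1)r_1\wedge T]$. The two solutions $Y$ and $\tY$ solve controlled RDEs driven by the \emph{same} $\bX$ and the same $f$, but with different controls $\alpha$ and $\tilde\alpha$ built either from the fixed paths $Y_j$, $\tY_j$ or from the previously constructed pieces. Setting $(F-\tF,(F-\tF)')$ accordingly, Assumption~\ref{assumption}~(i) holds with a constant $C_{F-\tF}$ bounded, up to a multiplicative constant depending on $\|f\|_{C^3_b}$, $p$, $\ell$ and $L$, by $\sum_j \|Y_j,Y_j';\tY_j,\tY_j'\|_{Z_{\cdot-r_j},p}$ plus the current distance between the previously constructed pieces of $Y$ and $\tY$. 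Applying Theorem~\ref{thm: continuity} on each subinterval and combining as in Step~2 of its proof (the jump contributions at $mr_1$ are handled by the same argument that produced~\eqref{eq: local continuity estimate 2}), one propagates the initial discrepancy through the finitely many iterations to obtain the claimed global estimate.

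The main obstacle I expect is the careful book-keeping required to lift the fixed controlled paths $(Y_j,Y_j') \in \cV^p_{Z_{\cdot-r_j}}$ into $\cV^p_X$ with the correct Gubinelli derivative, to update them across the gluing times $mr_1$, and to ensure that the constants in Assumption~\ref{assumption} for the enlarged functional stay under the control of the quantities listed in the statement; once the bookkeeping is in place, the proof is a direct iteration of Corollary~\ref{cor: controlled RDE} and of Step~2 of the proofs of Theorems~\ref{thm: existence and uniqueness} and~\ref{thm: continuity}.
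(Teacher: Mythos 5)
Your proposal is correct and follows essentially the same route as the paper: reduce the delayed RDE to a chain of controlled RDEs by iterating over subintervals of length $r_1$, treating the shifted past values as a fixed controlled-path control $\alpha$ in the sense of Corollary~\ref{cor: controlled RDE}, adjusting the initial condition across gluing times to absorb jumps of $\bX$ exactly as in Step~2 of Theorem~\ref{thm: existence and uniqueness}, and propagating the estimates of Corollary~\ref{cor: controlled RDE}~(i) and Theorem~\ref{thm: continuity} iteratively. The only difference is cosmetic: the paper invokes Corollary~\ref{cor: controlled RDE} directly whereas you re-verify Assumption~\ref{assumption} via Lemma~\ref{lemma: transformation of controlled path with control} and apply Theorems~\ref{thm: existence and uniqueness} and~\ref{thm: continuity}, which is the content of that corollary.
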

	
	\begin{proof}
		\emph{(i)} The existence and uniqueness of the solution follows by iteratively applying Corollary~\ref{cor: controlled RDE}~(i) to intervals of the length $r_1$.
		
		More precisely, we consider the functional
		\begin{equation*}
			(F,F') \colon \cV^p_X([0,r_1];\R^k) \to \cV^p_X([0,r_1];\cL(\R^d;\R^k)),
		\end{equation*}
		for $(\alpha,\alpha') \in \cV^p_X([0,r_1];\R^{k\ell})$ given by~\eqref{eq: fixed extension}, and apply Corollary~\ref{cor: controlled RDE}~(i) to show that there exists a unique solution to the RDE~\eqref{eq: RDE with constant delay} on the interval $[0,r_1)$.
		
		We now aim to solve the RDE~\eqref{eq: RDE with constant delay} iteratively on the subintervals $[ir_1,(i+1)r_1]$, $i = 1, \ldots, N-1$, assuming that $T = N r_1$ for some $N \in \N$. Given the solution on $[(i-1)r_1,ir_1)$, the value $Y_{ir_1}$ is determined by the jump of $\bX$ on $ir_1$. We therefore consider $(y_i, y_i') \in \cV^p_X([ir_1,(i+1)r_1];\R^k)$, where
		\begin{equation*}
			y_{i;t} = y_t + Y_{ir_1-} - y_{ir_1-} + F_{ir_1-}(Y) X_{ir_1-,ir_1} + F'_{ir_1-}(Y,Y') \X_{ir_1-,ir_1}, \quad t \in [ir_1,(i+1)r_1],
		\end{equation*}
		for every $i = 1, \ldots, N-1$, and
		\begin{equation*}
			(F,F') \colon \cV^p_X([ir_1,(i+1)r_1];\R^k) \to \cV^p_X([ir_1,(i+1)r_1];\cL(\R^d;\R^k)),
		\end{equation*}
		for $(\alpha,\alpha') \in \cV^p_X([ir_1,(i+1)r_1];\R^{k\ell})$ given by~\eqref{eq: fixed extension}. We again apply Corollary~\ref{cor: controlled RDE}~(i) to show that there exists a unique solution to the RDE~\eqref{eq: RDE with constant delay} on the interval $[ir_1,(i+1)r_1)$, that is
		\begin{equation*}
			Y_t = y_{i;t} + \int_{ir_1}^t F_s(Y) \dd \bX_s, \quad t \in [ir_1,(i+1)r_1)
		\end{equation*}
		for every $i = 1, \ldots, N-1$. Then, by pasting solutions on each of these subintervals together, we obtain a unique global solution~$Y$ to the RDE~\eqref{eq: RDE with constant delay}, which holds over the interval~$[0,T]$.
		
		The a priori estimate follows by iteratively combining the a priori estimate of Corollary~\ref{cor: controlled RDE}, and by the definition of $\alpha$ in~\eqref{eq: fixed extension}.
		
		\emph{(ii)} \emph{Local estimate on $[ir_1,(i+1)r_1]$, $i = 0, \ldots, N-1$.} To apply the continuity result presented in Theorem~\ref{thm: continuity} on the subintervals $[ir_1,(i+1)r_1]$, we need to ensure that the functionals satisfy the required assumptions. This is given for $(F,F'), (\tF,\tF')$ in Lemma~\ref{lemma: transformation of controlled path with control}, and for $(F-\tF,F'-\tF')$ we refer to the proof of Corollary~\ref{cor: controlled RDE}~(ii), and write $C_{F-\tF,i}$ for the corresponding constant. By Theorem~\ref{thm: continuity}, it then holds the estimate
		\begin{align*}
			&\|Y'-\tY'\|_{p,[ir_1,(i+1)r_1]} + \|R^Y - R^{\tY}\|_{\p,[ir_1,(i+1)r_1]} \\
			&\quad \lesssim |Y_{ir_1} - \tY_{ir_1}| + |F_{ir_1}(Y) - \tF_{ir_1}(\tY)| + \|y,y';\ty,\ty'\|_{p,[ir_1,(i+1)r_1]} + C_{F-\tF,i} \\
			&\quad \lesssim |y_0 - \ty_0| + \|y,y';\ty,\ty'\|_{X,p} + \sum_{j=1}^{\ell} |Y_{j;0} - \tY_{j;0}| + \|Y_j,Y_j';\tY_j,\tY_j'\|_{Z_{\cdot-r_j},p} \\
			&\qquad + \|Y' - \tY'\|_{p,[0,ir_1]} + \|R^Y - R^{\tY}\|_{\p,[0,ir_1]},
		\end{align*}
		where the implicit multiplicative constant depends on $p$, $\ell$, $\|f\|_{C^2_b}$, $K$, $L$, and $\|\bX\|_p$, see the definition of $(\alpha,\alpha'), (\tilde{\alpha},\tilde{\alpha}') \in \cV^p_X([0,T];\R^{k\ell})$.
		
		\emph{Global estimate.} Iteratively applying the local estimates and, as before, using that $\|\cdot\|_p \leq N \sum_{i=0}^{N-1} \|\cdot\|_{p,[ir_1,(i+1)r_1]}$, one can then derive the estimate.
	\end{proof}
	
	\subsection{RDEs with variable delay}
	
	Rough differential equations with variable delay represent a slight generalization of RDEs with constant delay. More precisely, let us consider the rough differential equation with variable delay
	\begin{equation}\label{eq: RDE with variable delay}
		Y_t = y_t + \int_0^t f(Y_s, Y_{s-\eta(s)}) \dd \bX_s, \quad t \in [0,T],
	\end{equation}
	where $\bX \in \cD^p([0,T];\R^d)$ for $p \in (2,3)$, $(y,y') \in \cV^p_X([0,T];\R^k)$, $f \in C^3_b(\R^{2k};\cL(\R^d;\R^k))$, and $\eta(\cdot)$ be a bounded continuous function with $\eta(t) \geq \epsilon$, $t \in [0,T]$, for some $\epsilon > 0$, and $\bar{\eta} = \sup\{\eta(t)-t : t \in [0,T]\}$. We assume that the driving rough path $\bX$ is of the form
	\begin{equation*}
		X_t = (Z_t, Z_{t-\eta(t)}), \quad t \in [0,T],
	\end{equation*}
	for a path $Z \in D^p([-\bar{\eta},T];\R^e)$ with $d = 2e$. We extend the vector field $f$ into the space of controlled paths by setting
	\begin{align*}
		&(F,F') \colon \cV^p_X([0,T];\R^k) \to \cV^p_X([0,T];\cL(\R^d;\R^k)),\\
		&(F(Y),F'(Y,Y')) = (f((Y,\alpha)), \D f((Y,\alpha)) (Y',\alpha')),
	\end{align*}
	for $(\alpha,\alpha') \in \cV^p_X([0,T];\R^k)$, where
	\begin{equation*}
		\alpha_t :=
		\begin{cases}
			Y_{t-\eta(t)}, \quad &t \geq \eta(t) \\
			Y_{\eta;t}, \quad &t < \eta(t)
		\end{cases},
	\end{equation*}
	for some fixed controlled path $(Y_\eta,Y_\eta') \in \cV^p_{Z_{\cdot - \eta(\cdot)}}([0,T];\R^k)$.
	
	\begin{corollary}~
		\begin{itemize}
			\item[(i)] In the above setting, there exists a unique solution to the delayed RDE~\eqref{eq: RDE with variable delay}. Moreover, there exists a componentwise non-decreasing function $K_p \colon (0,\infty) \times [0, \infty)^4 \to [0,\infty)$ such that
			\begin{equation*}
				\|Y,Y'\|_{X,p} \leq K_p( \epsilon^{-1}, \|f\|_{C^2_b}, \|y,y'\|_{X,p}, \|Y_{\eta},Y_{\eta}'\|_{X,p}, \|\bX\|_p).
			\end{equation*}
			\item[(ii)] Let $(Y,Y') \in \cV^p_X([0,T];\R^k)$ be the unique solution to the RDE with variable delay~\eqref{eq: RDE with variable delay}. Moreover, consider $(\ty,\ty') \in \cV^p_X([0,T];\R^k)$, and a fixed controlled path $(\tY_\eta,\tY_\eta') \in \cV^p_{Z_{\cdot-\eta(\cdot)}}([0,T];\R^k)$ with corresponding solution $(\tY,\tY') \in \cV^p_X([0,T];\R^k)$.
			
			Suppose that $\|Y,Y'\|_{X,p}$, $\|\tY,\tY'\|_{X,p} \leq K$, for some $K > 0$, and $\|Y_\eta,Y_\eta\|_{X,p}$, \allowbreak $\|\tY_\eta,\tY_\eta'\|_{X,p} \leq L$, for some $L > 0$. Then, we have the estimate
			\begin{align*}
				&|Y_0 - \tY_0| + \|Y,Y';\tY,\tY'\|_{X,p} \\
				&\quad \lesssim |y_0 - \ty_0| + \|y,y';\ty,\ty'\|_{X,p} + |Y_{\eta;0} - \tY_{\eta;0}| + \|Y_\eta,Y_\eta';\tY_\eta,\tY_\eta'\|_{Z_{\cdot-\eta(\cdot)},p},
			\end{align*}
			where the implicit multiplicative constant depends on $p$, $\epsilon$, $T$, $\eta$, $\|f\|_{C^3_b}$, $K$, $L$, and $\|\bX\|_p$.
		\end{itemize}
	\end{corollary}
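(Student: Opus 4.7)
The plan is to mimic the iterative strategy of Proposition~\ref{prop: RDE with constant delay}, with subintervals of length $\epsilon$ in place of $r_1$, since $\epsilon$ is a uniform lower bound on the delay $\eta$. The crucial structural observation is that on any subinterval $I_i := [i\epsilon, (i+1)\epsilon] \cap [0,T]$, the inequality $t - \eta(t) \leq i\epsilon$ holds, so the delayed term $t \mapsto Y_{t-\eta(t)}$ only probes values of $Y$ on $[-\bar{\eta}, i\epsilon]$, and is hence already determined by the preceding subintervals (or by $Y_\eta$ when $t-\eta(t) < 0$). Thus the RDE~\eqref{eq: RDE with variable delay} on $I_i$ reduces to a controlled RDE of the type treated in Corollary~\ref{cor: controlled RDE}, with the second argument of $f$ given by a fixed controlled path $\alpha$ on $I_i$.

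For (i), on $I_0 = [0,\epsilon]$ we take $\alpha_t = Y_{\eta;t}$, which is a controlled path in $\cV^p_{Z_{\cdot-\eta(\cdot)}}([0,\epsilon];\R^k)$ by hypothesis, and apply Corollary~\ref{cor: controlled RDE}~(i) to obtain a unique solution on $[0,\epsilon)$, with the jump at $\epsilon$ fixed by $\bX$ exactly as in the proof of Proposition~\ref{prop: RDE with constant delay}. For $i \geq 1$ we define $\alpha$ on $I_i$ by $\alpha_t = Y_{t-\eta(t)}$ when $t - \eta(t) \geq 0$ and $\alpha_t = Y_{\eta;t}$ otherwise; thanks to the form $X_t = (Z_t, Z_{t-\eta(t)})$, the path $\alpha$ is naturally controlled by $X$ with Gubinelli derivative acting on the second component of $X_{s,t}$, and its controlled-path norm is bounded in terms of the previously constructed solution (together with $Y_\eta$). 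Corollary~\ref{cor: controlled RDE}~(i) again produces a unique solution on $I_i$, and pasting across the $N = \lceil T/\epsilon \rceil$ subintervals yields the unique global solution. The a priori estimate follows by iterating the a priori bound of Corollary~\ref{cor: controlled RDE}, with the iteration count $N$ accounting for the $\epsilon^{-1}$-dependence in the final bound.

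For (ii), on each $I_i$ apply Theorem~\ref{thm: continuity} to compare the two solutions, driven by the same $\bX$ but with controls $\alpha, \tilde{\alpha}$ built respectively from $(Y,Y')$ and $(\tY,\tY')$ on earlier subintervals (or from $Y_\eta, \tY_\eta$ on $I_0$). As in the proof of Corollary~\ref{cor: controlled RDE}~(ii), the difference $\alpha - \tilde{\alpha}$ contributes to the local constant $C_{F-\tF,i}$, which in turn is controlled by the comparison error accumulated on the preceding subintervals together with $|Y_{\eta;0} - \tY_{\eta;0}|$ and $\|Y_\eta, Y_\eta'; \tY_\eta, \tY_\eta'\|_{Z_{\cdot-\eta(\cdot)},p}$. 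Iterating the resulting local estimates exactly as in the proof of Proposition~\ref{prop: RDE with constant delay}~(ii), using $\|\cdot\|_p \leq N \sum_i \|\cdot\|_{p,I_i}$, closes the argument.

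The main obstacle is the kinematic verification that the restriction of $t \mapsto Y_{t-\eta(t)}$ (suitably extended by $Y_\eta$) defines a controlled path in $\cV^p_X$ on each $I_i$, whose controlled-path norm is dominated by $\|Y,Y'\|_{X,p,[0,i\epsilon]}$ plus $\|Y_\eta, Y_\eta'\|_{Z_{\cdot-\eta(\cdot)},p}$, and likewise for the differences that enter the continuity estimate. This rests on the postulated form of $\bX$ (which is precisely why it is imposed): continuity of $\eta$ together with $\eta \geq \epsilon$ ensures $\alpha$ lies in $D^p$ on each $I_i$, while the second component $Z_{\cdot-\eta(\cdot)}$ of $X$ naturally absorbs the delayed increments $Y_{s-\eta(s),t-\eta(t)}$. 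Once these controlled-path bounds are in hand, the iteration over $N$ subintervals is completely analogous to the constant-delay case.
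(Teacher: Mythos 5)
Your proposal is correct and follows exactly the paper's intended route: the paper's proof is a two-line reference to Proposition~\ref{prop: RDE with constant delay}, iterating Corollary~\ref{cor: controlled RDE} over subintervals of length $\epsilon$, and your write-up faithfully unpacks that reduction, including the key observation that $\eta \geq \epsilon$ forces $t - \eta(t) \leq i\epsilon$ on $I_i$ so the delayed term is a fixed control built from past data, and the verification that this control lies in $\cV^p_X$ thanks to the postulated form of $\bX$.
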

	
	\begin{proof}
		\emph{(i)} The existence and uniqueness of the solution follows by iteratively applying Corollary~\ref{cor: controlled RDE}~(i) to intervals of the length $\epsilon$, see the proof of Proposition~\ref{prop: RDE with constant delay}~(i). The a priori estimate follows analogously.
		
		\emph{(ii)} The continuity of the solution map follows analogously to Proposition~\ref{prop: RDE with constant delay}~(ii).
	\end{proof}

	\section{Application to stochastic differential equations with delay}\label{sec: rough path lift}
	
	One main application of rough path theory is a pathwise and robust approach to stochastic differential equations, see e.g.~\cite{Friz2020}. In this section we show how a c{\`a}dl{\`a}g martingale and its delayed version can be lifted to a joint random rough path in the spirit of stochastic It{\^o} integration. Consequently, this allows to apply the results on rough functional differential equations, provided in Section~\ref{sec: RFDE}, to It{\^o} stochastic differential equations (SDEs) with constant delay.
	
	\medskip
	
	Throughout the entire section, let us consider constant delays $0 < r_1 < \dots < r_\ell$ with $\ell \in \N$, and let $(\Omega, \mathcal{F},\P)$ be a probability space with a complete and right-continuous filtration $(\mathcal{F}_t)_{t\in[-r_\ell,T]}$. Let $Z = (Z_t)_{t\in[-r_\ell,T]}$ be an $e$-dimensional square-integrable c{\`a}dl{\`a}g martingale that is defined on~$(\Omega, \mathcal{F}, \P$), with $Z_t = 0$ for $t < 0$. The space of all square-integrable random variables on $(\Omega, \mathcal{F},\P)$ is denoted by~$L^2$ and equipped with the standard $L^2$-norm.
	
	\subsection{Delayed martingales as rough paths}
	
	The aim of this subsection is to construct a random rough path above the stochastic process $X=(X_t)_{t\in [0,T]}$, defined as
	\begin{equation*}
		X_t:= (Z_t, Z_{t-r_1}, \ldots, Z_{t-r_\ell}),\qquad t\in [0,T],
	\end{equation*}
	in the spirit of stochastic It{\^o} integration. Recall that, for a martingale $(S_t)_{t\in [0,T]}$ (or, more generally, for a semimartingale $(S_t)_{t\in [0,T]}$), the stochastic It{\^o} integral $\int_0^t \varphi_s \dd S_s$ can be defined whenever $(\varphi_t)_{t\in [0,T]}$ is a stochastic process with left-continuous sample paths with right-limits, which is adapted to the augmented filtration generated by $(S_t)_{t\in [0,T]}$. For a comprehensive introduction to stochastic integration see, e.g., \cite{Protter2005}. In the following, when writing a stochastic integral, like $\int_0^t \varphi_s \dd S_s$, we will always implicitly refer to the augmented filtration generated by $(S_t)_{t\in [0,T]}$ if not explicitly stated otherwise.
	
	To construct a random rough path above the stochastic process $X=(X_t)_{t\in [0,T]}$, the main challenge is to establish the existence of the random integral $\int_0^t Z_{t-r_{j_1}} \dd Z_{t-r_{j_2}}$ for $j_1 < j_2$ since $(Z_{t-r_{j_1}})_{t\in [0,T]}$ is, in general, not adapted to the augmented filtration generated by $(Z_{t-r_{j_2}})_{t\in [0,T]}$.
	
	As a first step to construct a random rough path above the stochastic process~$X$, in the next lemma, we derive the existence of an auxiliary process, inspired by the quadratic co-variation of martingales.
	
	\begin{lemma}\label{lemma: quadratic covariation exists}
		Let $Z = (Z_t)_{t\in[-r_\ell,T]}$ be an $e$-dimensional square-integrable c{\`a}dl{\`a}g martingale that is defined on~$(\Omega, \mathcal{F}, \P$), with $Z_t = 0$ for $t < 0$. Then, for $i_1, i_2 = 1, \ldots, e$, $j_1, j_2 = 0, \ldots, \ell$, $j_1 \neq j_2$, we have
		\begin{equation*}
			\E \bigg[ \sup_{t \in [0,T]} \Big \lvert	\sum_{k=0}^{N_n-1} Z^{i_1}_{t_k^n \wedge t -r_{j_1},t_{k+1}^n \wedge t-r_{j_1}} Z^{i_2}_{t_k^n  \wedge t -r_{j_2},t_{k+1}^n  \wedge t -r_{j_2}} - \sum_{s \leq t} \Delta_s Z^{i_1}_{\cdot-r_{j_1}} \Delta_s Z^{i_2}_{\cdot-r_{j_2}} \Big \rvert^2 \bigg] \longrightarrow 0
		\end{equation*}
		along any sequence of partitions $\mathcal{P}^n = \{0 = t_0^n < t_1^n < \dots < t_{N_n}^n = T\}$, $n \in \N$, of the interval $[0,T]$ with vanishing mesh size, so that $|\mathcal{P}^n| \to 0 $ as $n \to \infty$. Here, we write $\Delta_t H = H_{t-,t}$, with $H_{t-} = \lim_{s\uparrow t} H_s$, for the jump of a stochastic process $H$ at time $t$.
		
		We define the stochastic process
		\begin{equation*}
			[Z^{i_1}_{\cdot-r_{j_1}}, Z^{i_2}_{\cdot-r_{j_2}}]_t := \sum_{s\leq t} \Delta_s Z^{i_1}_{\cdot-r_{j_1}} \Delta_s Z^{i_2}_{\cdot-r_{j_2}}, \quad t \in [0,T].
		\end{equation*}
		This process is c{\`a}dl{\`a}g and has $\P$-almost surely finite $\p$-variation, that is, $[Z^{i_1}_{\cdot-r_{j_1}}, Z^{i_2}_{\cdot-r_{j_2}}]\in D^{\p}([0,T];\R)$ $\P$-almost surely.
	\end{lemma}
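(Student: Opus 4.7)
The plan is to rewrite the Riemann sum $A^n_t$ as a discrete It\^o stochastic integral against the martingale $Z^{i_1}$, and pass to the limit via It\^o's isometry combined with Doob's $L^2$-inequality. Without loss of generality, assume $r_{j_1}<r_{j_2}$ and set $r:=r_{j_2}-r_{j_1}>0$. The decisive measurability observation is that once $|\mathcal{P}^n|<r$, one has $t_{k+1}^n-r_{j_2}<t_k^n-r_{j_1}$ on every partition interval, so that the increment $Z^{i_2}_{t_k^n-r_{j_2},\,t_{k+1}^n-r_{j_2}}$ is $\mathcal{F}_{t_k^n-r_{j_1}}$-measurable. Reparametrising by $u=t-r_{j_1}$ and setting $u_k^n := t_k^n-r_{j_1}$, one then obtains
\begin{equation*}
A^n_t = \int_0^{(t-r_{j_1})^+} G^n_v \dd Z^{i_1}_v, \qquad G^n_v := Z^{i_2}_{u_k^n-r,\,u_{k+1}^n-r} \text{ for } v\in(u_k^n,u_{k+1}^n],
\end{equation*}
with $G^n$ simple predictable for $(\mathcal{F}_u)_u$ and $Z^{i_1}$ a square-integrable $(\mathcal{F}_u)_u$-martingale.

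As $|\mathcal{P}^n|\to 0$, the c\`adl\`ag property of $Z^{i_2}$ gives $G^n_v\to K_v := \Delta_v(Z^{i_2}_{\cdot-r_{j_2}+r_{j_1}})$ pointwise, with $|G^n_v|\leq 2\sup_{s\in[0,T]}|Z^{i_2}_s|\in L^2(\Omega)$ by Doob. Localising via the stopping times $\tau_N:=\inf\{t:|Z^{i_1}_t|+|Z^{i_2}_t|+[Z^{i_1}]_t\geq N\}\wedge T$ to render this envelope integrable against $\d\P\otimes \d[Z^{i_1}]$, It\^o's isometry combined with Doob's $L^2$-inequality bound $\E[\sup_{t\leq\tau_N}|A^n_t-\int_0^{(t-r_{j_1})^+}K_v\dd Z^{i_1}_v|^2]$ by $4\,\E[\int_0^{\tau_N-r_{j_1}}(G^n_v-K_v)^2\dd[Z^{i_1}]_v]$, which vanishes as $n\to\infty$ by dominated convergence. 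Since $K$ is supported on the at-most-countable jump set of $Z^{i_2}_{\cdot-r_{j_2}}$, the continuous-martingale contribution $\int K\dd Z^{i_1,c}$ is null, and unwinding the shift $s=v+r_{j_1}$ identifies $\int_0^{(t-r_{j_1})^+}K_v\dd Z^{i_1}_v$ with $\sum_{s\leq t}\Delta_s Z^{i_1}_{\cdot-r_{j_1}}\Delta_s Z^{i_2}_{\cdot-r_{j_2}}$. Sending $N\to\infty$ completes the uniform $L^2$-convergence on $[0,T]$.

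The pure-jump limit process is c\`adl\`ag in $t$ by inspection, and a Cauchy--Schwarz estimate on the jumps gives $|[Z^{i_1}_{\cdot-r_{j_1}},Z^{i_2}_{\cdot-r_{j_2}}]_{u,v}|\leq (\sum_{s\in(u,v]}(\Delta_s Z^{i_1}_{\cdot-r_{j_1}})^2)^{1/2}(\sum_{s\in(u,v]}(\Delta_s Z^{i_2}_{\cdot-r_{j_2}})^2)^{1/2}$; a second Cauchy--Schwarz at the partition level together with the trivial $p/2$-variation bound for monotone processes yields an almost surely finite bound for the $p/2$-variation norm. The main obstacle is the It\^o-integral passage to the limit: since the candidate integrand $K_v$ is zero off a countable set, convergence depends on controlling $G^n_v-K_v$ pathwise by a common integrable envelope, which is exactly what the localisation by $\tau_N$ provides; it is here that the hypothesis $r>0$ (equivalently $j_1\neq j_2$) enters essentially, via the predictability of $G^n$ in the shifted filtration.
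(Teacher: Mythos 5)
Your proof is correct and follows essentially the same route as the paper's: rewrite the Riemann sum as a discrete It\^o integral (the delay $r>0$ makes the integrand predictable), pass to the limit via It\^o isometry/Doob after localizing to a bounded integrand, and identify the limit with the pure-jump sum. The only cosmetic differences are that you make the "WLOG $j_1=0$" time-shift explicit (the paper silently invokes it), and for the $p/2$-variation you use a two-step Cauchy--Schwarz argument where the paper uses the polarization identity $4[M,\tilde M_{\cdot-r}] = \sum(\Delta M + \Delta\tilde M_{\cdot-r})^2 - \sum(\Delta M - \Delta\tilde M_{\cdot-r})^2$ to see directly that the bracket has finite $1$-variation; both are standard and valid.
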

	
	\begin{proof}
		We assume~w.l.o.g. that $j_1 = 0$, and write $r = r_{j_2}$, and $M = Z^{i_1}$, $\widetilde{M} = Z^{i_2}$. For $n \in \N$, we define
		\begin{equation*}
			H^n_t := \sum_{k=0}^{N_n-1} \widetilde{M}_{t_k^n-r,t_{k+1}^n-r} \1_{(t_k^n,t_{k+1}^n]}(t), \quad t \in [0,T],
		\end{equation*}
		and note that for $|\mathcal{P}^n| < r$, $H^n$ is indeed a simple predictable process, see~\cite[Chapter~II]{Protter2005}. The It{\^o} integral is then given by
		\begin{equation*}
			\int_0^t H^n_s \dd M_s = \sum_{k=0}^{N_n-1} \widetilde{M}_{t_k^n  \wedge t-r,t_{k+1}^n  \wedge t-r} M_{t_k^n \wedge t,t_{k+1}^n \wedge t}.
		\end{equation*}
		We now aim to show that
		\begin{equation}\label{eq: convergence in H^2}
			\E \bigg[   \int_0^T (H^n_s - H_s)^2 \dd [ M ]_s  \bigg] \to 0, \quad \text{as}\quad n \to \infty,
		\end{equation}
		where $H:= \Delta_{\cdot} \widetilde{M}_{\cdot-r}$, and $[\cdot]$ denotes the quadratic variation. Using the localizing sequence $\tau_m = T \wedge \inf \{t : |\widetilde{M}_t| \geq m \}$, $m \in \N$, and replacing $H^n$ by $H^n_{\cdot \wedge \tau_m}$ and $H$ by $H_{\cdot \wedge \tau_m}$, we may assume that the integrand is uniformly bounded, so that we can apply the dominated convergence theorem. Since $H^n \to H$ converges pointwise as $n\to \infty$, this shows~\eqref{eq: convergence in H^2}.
		
		By~\cite[Chapter~I.4]{Jacod2003}, it follows that
		\begin{equation*}
			\E \bigg[ \sup_{t\in [0,T]} \Big \lvert \int_0^t H^n_s \dd M_s - \int_0^t H_s \dd M_s \Big|^2 \bigg] \to 0
		\end{equation*}
		as $n\to\infty$, thus uniformly in $L^2$, and as
		\begin{equation*}
			\int_0^t H_s \dd M_s = \sum_{s \leq t} \Delta_s \widetilde{M}_{\cdot-r} \Delta_s M,
		\end{equation*}
		this implies the convergence result. Further,
		\begin{equation*}
			4[M,\widetilde{M}_{\cdot-r}] = \sum_{s \leq \cdot} (\Delta_s M + \Delta_s \widetilde{M}_{\cdot-r})^2 - \sum_{s \leq \cdot} (\Delta_s M - \Delta_s \widetilde{M}_{\cdot-r})^2
		\end{equation*}
		has c{\`a}dl{\`a}g sample paths of finite $1$-variation, as both terms on the right hand side are monotonically increasing, which implies that $[M,\widetilde{M}_{\cdot-r}]$ has $\P$-almost surely finite $\p$-variation, and concludes the proof.
	\end{proof}
	
	\begin{proposition}\label{prop: rough path lift martingale}
		Let $p \in (2,3)$, and let $Z = (Z_t)_{t\in[-r,T]}$ be an $e$-dimensional square-integrable c{\`a}dl{\`a}g martingale that is defined on $(\Omega, \mathcal{F}, \P)$, with $Z_t = 0$ for $t < 0$. We set $X = (Z, Z_{\cdot-r_1}, \ldots, Z_{\cdot-r_\ell})$. Then, $X$ can be lifted to a random rough path, by defining $\bX = (X,\X) \in \cD^p([0,T];\R^d)$, $\P$-almost surely, with $d = e(\ell + 1)$, where
		\begin{equation*}
			\X^{ij}_{s,t} := \int_s^t X^i_{u-} \dd X^j_u - X^i_s X^j_{s,t} := \int_0^t X^i_{u-} \dd X^j_u - \int_0^s X^i_{u-} \dd X^j_u - X^i_s X^j_{s,t},
		\end{equation*}
		for $i, j = 1, \ldots, d$ with $i = j$ and $i > j$ such that $X^i = Z^{i_1}_{\cdot-r_{j_1}}$, $X^j = Z^{i_2}_{\cdot-r_{j_2}}$ with $i_1, i_2 = 0, \ldots, e$, $j_1, j_2 = 0, \ldots, \ell$, $j_1 > j_2$, and else
		\begin{equation*}
			\X^{ji}_{s,t} = - \X^{ij}_{s,t} + X^i_{s,t} X^j_{s,t} - [X^i,X^j]_{s,t}
		\end{equation*}
		for any $(s,t) \in \Delta_T$, and where the integration is defined as a stochastic It{\^o} integral, and $[X^i, X^j]$ is defined in Lemma~\ref{lemma: quadratic covariation exists}.
	\end{proposition}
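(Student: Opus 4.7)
\emph{Strategy.} My plan is to define the It\^o integrals appearing in the formula for $\X$ using an appropriate time-shifted filtration, and then to verify the three defining properties of a c{\`a}dl{\`a}g $p$-rough path: $X \in D^p$, $\X \in D_2^{p/2}$, and Chen's relation. For each directly defined entry $(i,j)$ (that is, $i = j$ or $i > j$ with $j_1 \geq j_2$, in the notation of the proposition), introduce the shifted filtration $\mathcal{G}^{(j_2)}_u := \mathcal{F}_{u - r_{j_2}}$ (with the convention $\mathcal{F}_s = \mathcal{F}_0$ for $s \leq 0$). Under this filtration, $X^j = Z^{i_2}_{\cdot - r_{j_2}}$ is a c{\`a}dl{\`a}g square-integrable martingale, while the integrand $u \mapsto X^i_{u-} = Z^{i_1}_{(u-r_{j_1})-}$ is c{\`a}gl{\`a}d and $\mathcal{G}^{(j_2)}$-adapted (because $r_{j_1} \geq r_{j_2}$ forces $Z^{i_1}_{(u-r_{j_1})-}$ to be $\mathcal{F}_{u-r_{j_1}} \subset \mathcal{F}_{u-r_{j_2}}$-measurable), hence $\mathcal{G}^{(j_2)}$-predictable. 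Thus $\int_0^{\cdot} X^i_{u-}\,\d X^j_u$ is well-defined as an It\^o integral, and the remaining entries are fixed by the stated commutator formula.

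\emph{Regularity.} Each component $X^k$ is a c{\`a}dl{\`a}g square-integrable martingale (up to a deterministic time shift) in its natural filtration, so L{\'e}pingle's BDG-type inequality for c{\`a}dl{\`a}g martingales gives $\|X^k\|_p < \infty$ almost surely for every $p \in (2,3)$, hence $X \in D^p([0,T];\R^d)$ a.s. For the directly defined entries, rewrite
\[
\X^{ij}_{s,t} = \int_s^t (X^i_{u-} - X^i_s)\,\d X^j_u,
\]
which is a c{\`a}dl{\`a}g local $\mathcal{G}^{(j_2)}$-martingale in the upper limit with bracket $\int_s^\cdot (X^i_{u-} - X^i_s)^2\,\d [X^j]_u$. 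Combining BDG with L{\'e}pingle then yields $\|\X^{ij}\|_{p/2} < \infty$ almost surely, in the spirit of the auxiliary estimates collected in Appendix~\ref{sec: appendix}. For the entries defined by the commutator formula, finite $p/2$-variation of $\X^{ji}$ follows from that of $\X^{ij}$, from Lemma~\ref{lemma: quadratic covariation exists} (which provides $[X^i, X^j] \in D^{p/2}$ a.s.), and from the Cauchy--Schwarz observation that $(s,t) \mapsto X^i_{s,t} X^j_{s,t}$ lies in $D^{p/2}_2$ whenever $X^i, X^j \in D^p$.

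\emph{Chen's relation and main obstacle.} For the directly defined entries, Chen's relation follows from the additivity of the It\^o integral together with a short expansion:
\[
\X^{ij}_{s,t} - \X^{ij}_{s,u} - \X^{ij}_{u,t} = (X^i_u - X^i_s)(X^j_t - X^j_u) = X^i_{s,u} X^j_{u,t}.
\]
For the commutator-defined entries, combining this identity with the bilinear expansion $X^i_{s,t} X^j_{s,t} - X^i_{s,u} X^j_{s,u} - X^i_{u,t} X^j_{u,t} = X^i_{s,u} X^j_{u,t} + X^i_{u,t} X^j_{s,u}$, together with the additivity of $[X^i, X^j]$ on the simplex (immediate from its definition as a sum over jumps), yields the required $\X^{ji}_{s,t} - \X^{ji}_{s,u} - \X^{ji}_{u,t} = X^j_{s,u} X^i_{u,t}$. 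C{\`a}dl{\`a}g regularity of $\X^{ij}$ in each argument is inherited from the It\^o integral. The principal technical difficulty is the almost-sure $p/2$-variation bound on the It\^o integrals $\X^{ij}$, since we deal with c{\`a}dl{\`a}g (not continuous) martingales and integrand/integrator live a priori in different delayed filtrations; this is resolved by passing to the common filtration $\mathcal{G}^{(j_2)}$ and applying the BDG plus L{\'e}pingle machinery, after which the remaining verifications are routine.
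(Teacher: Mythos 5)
Your strategy---realising the It\^o integrals in a time-shifted filtration $\mathcal{G}^{(j_2)}_u := \mathcal{F}_{u-r_{j_2}}$ in which $X^j$ is a martingale and $X^i$ is adapted (hence predictable, being c{\`a}gl{\`a}d), then verifying Chen's relation by direct computation and controlling the commutator-defined entries via Lemma~\ref{lemma: quadratic covariation exists} and Cauchy--Schwarz---is sound and matches the paper's construction in spirit; the paper leaves the filtration implicit but relies on the same reduction. Your Chen's relation calculations, including the case of $\X^{ji}$, are correct, as are the observations that $(s,t)\mapsto X^i_{s,t}X^j_{s,t}$ lies in $D^{p/2}_2$ by Cauchy--Schwarz and that $[X^i,X^j]\in D^{p/2}$ by Lemma~\ref{lemma: quadratic covariation exists}.

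The genuine gap is the claim that combining BDG with L\'epingle ``yields $\|\X^{ij}\|_{p/2} < \infty$ almost surely.'' L\'epingle's theorem (and its $p$-variation BDG refinement) gives that a one-parameter c{\`a}dl{\`a}g local martingale has a.s.\ finite $p$-variation for every $p>2$; but $\X^{ij}$ is a two-parameter object that is \emph{not} the increment $K_t - K_s$ of any single local martingale $K$. Indeed $\X^{ij}_{s,t}=N_{s,t}-X^i_sX^j_{s,t}$ with $N=\int_0^\cdot X^i_{u-}\,\d X^j_u$, and each of the two terms separately has only finite $p$-variation; the gain to $p/2$-variation comes purely from cancellation, which is exactly what makes the It\^o lift non-trivial even for a genuine martingale. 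The difficulty is compounded here because $(X^i,X^j)$ is \emph{not} a two-dimensional $\mathcal{G}^{(j_2)}$-martingale (only $X^j$ is), so the known enhanced BDG estimates for It\^o lifts of c{\`a}dl\`ag martingales do not apply off the shelf. The paper instead sets up a dyadic stopping-time approximation of $X$, applies the plain BDG inequality to get an $L^2$ error of order $2^{-n}$ for the approximate second-level integrals, uses Chebyshev and Borel--Cantelli to upgrade this to an a.s.\ rate $2^{-n(1-\epsilon)}$ uniformly in time, and then invokes the variation argument of~\cite[Theorem~3.1]{Liu2018} to convert this a.s.\ rate of approximation into an a.s.\ $p/2$-variation bound. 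None of this machinery appears in your sketch, and the appeal to Appendix~\ref{sec: appendix} is misplaced: those are deterministic sewing estimates for rough integrals, not enhanced BDG inequalities. To close the gap you would need to carry out the dyadic approximation and Borel--Cantelli argument, or cite a theorem that explicitly covers iterated integrals $\int_s^t (V_{u-}-V_s)\,\d M_u$ with $V$ merely adapted and of finite $p$-variation and $M$ a c{\`a}dl{\`a}g martingale in the same filtration.
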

	
	\begin{remark}
		The stochastic integral $\int_0^t Z^{i_1}_{u-r_{j_1}-} \dd Z^{i_2}_{u-r_{j_2}}$ can be defined using classical stochastic It{\^o} integration if $j_1 > j_2$. Indeed, the stochastic process $(Z^{i_2}_{t-r_{j_2}})_{t\in [0,T]}$ is a martingale and the stochastic process $(Z^{i_1}_{t-r_{j_1}})_{t\in [0,T]}$ is predictable, both with respect to the filtration $(\mathcal{F}_{t-r_{j_2}})_{t\in [0,T]}$ with $\mathcal{F}_t:=\{\Omega, \emptyset\}$ for $t<0$.
		
		Further, for $i \geq j$, we have that $\X^{ij}_{s,t} := \int_0^t X^i_{u-} \dd X^j_u - \int_0^s X^i_{u-} \dd X^j_u - X^i_s X^j_{s,t} = \int_s^t X^i_{s,u-} \dd X^j_u \\ = \int_s^t \int_s^{u-} \d X^i_r \dd X^j_u$, that is, $\X^{ij}_{s,t}$ coincides with the $2$-fold iterated integral, for $(s,t) \in \Delta_T$.
	\end{remark}

	\begin{proof}[Proof of Proposition~\ref{prop: rough path lift martingale}]
		First, by definition Chen's relation does hold: Let $0 \leq s \leq v \leq t \leq T$. Then, we have that
		\begin{align*}
			&\X^{ii}_{s,v} + \X^{ii}_{v,t} + X^i_{s,v} X^i_{v,t} \\
			&\quad = \int_s^t X^i_{u-} \dd X^i_u - X^i_s X^i_{s,v} - X^i_v 	X^i_{v,t} + X^i_{s,v} X^i_{v,t} \\
			&\quad = \int_s^t X^i_{u-} \dd X^i_u - X^i_s X^i_{s,t} \\
			&\quad = \X^{ii}_{s,t},
		\end{align*}
		similarly for $\X^{ij}$, and
		\begin{align*}
			&\X^{ji}_{s,v} + \X^{ji}_{v,t} + X^j_{s,v} X^i_{v,t} \\
			&\quad = - \X^{ij}_{s,v} + X^i_{s,v} X^j_{s,v} - [X^i,X^j]_{s,v} - \X^{ij}_{v,t} + X^i_{v,t} X^j_{v,t} - [X^i,X^j]_{v,t} + X^j_{s,v} X^i_{v,t} \\
			&\quad = - \X^{ij}_{s,t} + X^i_{s,v} X^j_{v,t} + X^i_{s,v} X^j_{s,v} + X^i_{v,t} X^j_{v,t} + X^i_{v,t} X^j_{s,v} - [X^i,X^j]_{s,t} \\
			&\quad = - \X^{ij}_{s,t} + X^i_{s,t} X^j_{s,t} - [X^i,X^j]_{s,t} \\
			&\quad = \X^{ji}_{s,t}.
		\end{align*}
		Further, $Z$ has $\P$-almost surely finite $p$-variation, see e.g.~\cite{Lepingle1976}, hence $X \in D^p([0,T];\R^d)$ $\P$-almost surely. Since the maps $s \mapsto \X_{s,t}$ for fixed $t$, and $t \mapsto \X_{s,t}$ for fixed $s$ are both c{\`a}dl{\`a}g, it thus remains to show that $\|\X\|_{\p} < \infty$ $\P$-almost surely.
		
		We define the dyadic stopping times $(\tau_k^n)_{n,k\in\N}$ by
		\begin{equation*}
			t_0^n := 0, \quad \tau_{k+1}^n := \inf \{ t \geq \tau_k^n \, : \, |X_t - X_{\tau_k^n}| \geq 2^{-n}\} \wedge T.
		\end{equation*}
		For $t \in [0,T]$ and $n \in \N$ we introduce the dyadic approximation
		\begin{equation*}
			X_t^n := \sum_{k=0}^\infty X_{\tau_k^n} \1_{(\tau_k^n,\tau_{k+1}^n]}(t) \quad \text{and} \quad \int_0^t X^{i,n}_r \dd X^j_r := \sum_{k=0}^\infty X^i_{\tau_k^n} X^j_{\tau_k^n \wedge t, \tau_{k+1}^n \wedge t},
		\end{equation*}
		for $i = j$ or $i \neq j$ such that $X^i = Z^{i_1}_{\cdot-r_{j_1}}$, $X^j = Z^{i_2}_{\cdot-r_{j_2}}$ for $i_1, i_2 = 1, \ldots, e$, $j_1, j_2 = 0, \ldots, \ell$, $j_1 > j_2$.
		
		We now show that for almost every $\omega \in \Omega$, for every $t \in [0,T]$ and for every $\epsilon \in (0,1)$, there exists a constant $C = C(\omega, \epsilon)$ such that for all $n \in \N$, we have
		\begin{equation}\label{eq: convergence rate martingale}
			\bigg \lvert \bigg( \int_0^t X^{i,n}_u \dd X^j_u - \int_0^t X^i_u \dd X^j_u \bigg)(\omega) \bigg \rvert \leq C 2^{-n(1-\epsilon)}.
		\end{equation}
		Applying the Burkholder--Davis--Gundy inequality, we have that
		\begin{equation*}
			\E \bigg[ \bigg (\sup_{t \in [0,T]} \int_0^t (X^{i,n}_u - X^i_u) \dd X^j_u \bigg )^2 \bigg] \lesssim \E \bigg[\int_0^T (X^{i,n}_u - X^i_u)^2 \dd [X^j]_u \bigg] \lesssim 2^{-2n}, \quad n \in \N,
		\end{equation*}
		where the implicit multiplicative constant depends on the quadratic variation $[X^j]$ of $X^j$. Combining this with Chebyshev's inequality, we obtain for any $\epsilon \in (0,1)$ that
		\begin{equation*}
			\P \bigg( \bigg \lvert \int_0^t (X^{i,n}_u - X^i_u) \dd X^j_u \bigg \rvert \geq 2^{-n(1-\epsilon)} \bigg) \lesssim 2^{2n(1-\epsilon)}2^{-2n} = 2^{-n\epsilon}.
		\end{equation*}
		So by the Borel--Cantelli lemma, we have that
		\begin{equation*}
			\sup_{t \in [0,T]} \Big( \int_0^t X^{i,n}_u \dd X^j_u - \int_0^t X^i_u \dd X^j_u \Big) \lesssim 2^{-n(1-\epsilon)},
		\end{equation*}
		where the implicit multiplicative constant is a random variable which does not depend on $n$, which shows~\eqref{eq: convergence rate martingale}. Proceeding as in the proof of~\cite[Theorem~3.1]{Liu2018}, we can show that $\|\X^{ij}\|_{\p} < \infty$ $\P$-almost surely.
		Further, let $i \neq j$ as above, then we have for any $(s,t) \in \Delta_T$ that
		\begin{equation*}
			|\X_{s,t}^{ji}|^{\p} \lesssim \|\X^{ij}\|^{\p}_{\p,[s,t]} + \|X\|_{p,[s,t]}^p + \|[X^i,X^j]\|^{\p}_{\p,[s,t]}.
		\end{equation*}
		Lemma~\ref{lemma: quadratic covariation exists} then ensures that $\|\X^{ji}\|_{\p} < \infty$ $\P$-almost surely.
	\end{proof}
	
	\begin{remark}
		For a fractional Brownian motion with Hurst index~$H$ and its delayed version a joint rough path was constructed in~\cite{Neuenkirch2008} based on the Russo--Vallois integral~\cite{Russo1993}, assuming that $H > \frac{1}{3}$. This construction was generalized in~\cite{Tindel2012} allowing for $H > \frac{1}{4}$. A related construction of a delayed rough path above a fractional Brownian motion can be found in~\cite{Besalu2014}. For a standard Brownian motion a delayed rough path was also defined in~\cite{Chhaibi2020} based on stochastic It{\^o} integration. While the delayed rough path provided in Proposition~\ref{prop: rough path lift martingale} corresponds to stochastic It{\^o} integration, see Proposition~\ref{prop: RDE and SDE coincide} below, the aforementioned constructions of delayed rough paths above (fractional) Brownian motion correspond to Stratonovich integration.
	\end{remark}
	
	\subsection{SDEs with delay as random RDEs}
	Let us consider the SDE with constant delay
	\begin{equation}\label{eq: SDE with delay}
		\begin{aligned}	Y_t &= y_0 + \int_0^t f(Y_{s-}, Y_{s-r_1-}, \ldots, Y_{s-r_\ell-}) \dd Z_s, \quad t \in [0,T], \\
			Y_t &= y_{t}, \quad t \in [-r_{\ell},0),
		\end{aligned}
	\end{equation}
	where $y \in D^{\p}([-r_ \ell,T];\R^k)$, $f \in C^3_b(\R^{k(\ell+1)};\cL(\R^e;\R^k))$, with $d = e(\ell+1)$, and the integral is defined as a stochastic It{\^o} integral. For a comprehensive introduction to stochastic It{\^o} integration and SDEs we refer, e.g., to the textbook~\cite{Protter2005}. It is known that the SDE~\eqref{eq: SDE with delay} possesses a unique (strong) solution, see e.g.~\cite[Chapter~V, Theorem~7]{Protter2005}. It turns out that the solutions to the SDE~\eqref{eq: SDE with delay} and to the RDE~\eqref{eq: RDE with constant delay} driven by the random rough path $\bX = (X,\X)$, with $\X$ as defined in Proposition~\ref{prop: rough path lift martingale}, coincide $\P$-almost surely.
	
	\begin{proposition}\label{prop: RDE and SDE coincide}
		Let $p \in (2,3)$, and let $Z = (Z_t)_{t\in[-r,T]}$ be an $e$-dimensional square-integrable c{\`a}dl{\`a}g martingale, that is defined on $(\Omega, \mathcal{F}, \P)$, with $Z_t = 0$ for $t < 0$. We set $X := (Z, Z_{\cdot-r_1}, \ldots, Z_{\cdot-r_\ell})$, and let $\bX = (X,\X)$ be the random rough path, with $\X$ defined as in Proposition~\ref{prop: rough path lift martingale}.
		\begin{itemize}
			\item[(i)] Let $(V,V')$ be an adapted stochastic process such that $V$ takes values in $\cL(\R^e;\R^k)$ and, identifying $V$ with its extension (by zero) to take values in $\cL(\R^d;\R^k)$, we assume that $(V(\omega),V'(\omega)) \in \cV^p_{X(\omega)}$ for almost every $\omega \in \Omega$.
			Then, the rough integral exists and coincides $\P$-almost surely with the stochastic It{\^o} integral, that is
			\begin{equation*}
				\int_0^T V_u \dd \bX_u = \int_0^T V_{u-} \dd Z_u, \quad \P \text{-almost surely}.
			\end{equation*}
			\item[(ii)] The solution of the SDE~\eqref{eq: SDE with delay} with coefficient $f$ and driven by $X$, and the solution of the RDE~\eqref{eq: RDE with constant delay} with coefficient $f_0$ and driven by $\bX$, coincide $\P$-almost surely, if $f_0$ be the extension (by zero) of $f$ to a function taking values in $\cL(\R^d;\R^k)$.
		\end{itemize}
	\end{proposition}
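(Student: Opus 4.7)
The strategy is to prove part~(i) first and then deduce part~(ii) by identifying the RDE's rough integral with an It\^o integral and invoking pathwise uniqueness for delayed SDEs. For part~(i), along any sequence of partitions $\mathcal{P}^n$ of $[0,T]$ with vanishing mesh, the rough integral is the pathwise limit of the compensated Riemann sums $\sum_{[u,v]\in\mathcal{P}^n}(V_u X_{u,v}+V'_u\X_{u,v})$, whereas the It\^o integral is the limit in probability of the uncompensated sums $\sum V_u X_{u,v}$. Extracting an almost surely convergent subsequence reduces the task to showing that the correction $R_n:=\sum V'_u\X_{u,v}$ tends to zero in probability, componentwise. For the components $\X^{ij}$ defined directly by It\^o integration (i.e.\ $i=j$, or the indicated $i>j$ case with $j_1>j_2$), Chen's relation rewrites $\X^{ij}_{u,v}=\int_u^v(X^i_{r-}-X^i_u)\,\dd X^j_r$, turning the corresponding component of $R_n$ into a single It\^o integral $\int_0^T\phi^n_r\,\dd X^j_r$, where $\phi^n_r:=\sum V'_u(X^i_{r-}-X^i_u)\1_{(u,v]}(r)$ is left-continuous, adapted, and tends pointwise to zero on the stopped interval $[0,\tau_m]$ with $\tau_m:=\inf\{t:|X_t|\ge m\}\wedge T$. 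Dominated convergence for $\E\int_0^{\tau_m}(\phi^n_r)^2\,\dd[X^j]_r$ combined with Burkholder--Davis--Gundy yields convergence in $L^2$ on $[0,\tau_m]$, and then $m\to\infty$ gives convergence in probability on $[0,T]$.

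For the remaining components, the defining identity $\X^{ji}_{u,v}=-\X^{ij}_{u,v}+X^i_{u,v}X^j_{u,v}-[X^i,X^j]_{u,v}$ reduces matters to showing that $\sum V'_u X^i_{u,v}X^j_{u,v}$ and $\sum V'_u[X^i,X^j]_{u,v}$ share a common limit. The second sum is a left-point Riemann--Stieltjes sum that converges pathwise to $\int_0^T V'_{u-}\,\dd[X^i,X^j]_u$, using that the c\`adl\`ag Gubinelli derivative $V'$ satisfies $V'_u=V'_{u-}$ away from a countable, hence $\dd[X^i,X^j]$-null almost surely, set (since $[X^i,X^j]$ is the pure-jump process of Lemma~\ref{lemma: quadratic covariation exists}). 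The first sum converges in probability to the same limit by the classical polarisation identity $\sum X^i_{u,v}X^j_{u,v}\to[X^i,X^j]_T$ in probability, combined with a stopping-time approximation that propagates the limit to sums weighted by the bounded adapted process $V'$. Consequently $R_n\to 0$ in probability, establishing part~(i).

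For part~(ii), let $Y$ denote the unique RDE solution from Proposition~\ref{prop: RDE with constant delay}. By the construction in Section~\ref{sec: RDEs with constant delay}, $F_s(Y)=f(Y_s,Y_{s-r_1},\ldots,Y_{s-r_\ell})$ (with the prescribed initial path inserted on $[0,r_j)$) and the Gubinelli derivative $F'(Y,Y')=\D f(\cdot)(Y',\alpha')$ is $\P$-a.s.\ bounded in sup-norm since $f\in C^3_b$ and the controlled path norms $\|Y,Y'\|_{X,p}$ and $\|\alpha_j,\alpha_j'\|_{X,p}$ are almost surely finite; a standard localisation in $\omega$ renders this bound uniform. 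Part~(i), applied with $V=F(Y)$ and $V'=F'(Y,Y')$, then identifies the rough integral with the stochastic It\^o integral
\[
\int_0^t F_s(Y)\,\dd\bX_s = \int_0^t f(Y_{s-},Y_{(s-r_1)-},\ldots,Y_{(s-r_\ell)-})\,\dd X_s
\]
almost surely, so $Y$ is a c\`adl\`ag adapted solution of the delayed It\^o SDE~\eqref{eq: SDE with delay}. Pathwise uniqueness for delayed It\^o SDEs with globally Lipschitz coefficients (\cite[Chapter~V, Theorem~7]{Protter2005}) then forces $Y$ to coincide $\P$-a.s.\ with the unique strong solution. The main obstacle is the analysis of the "upper triangular" components of $\X$ in part~(i): since $\sum V'_u\X^{ji}_{u,v}$ cannot be written as a single It\^o integral of a vanishing integrand, one has to establish the \emph{exact} cancellation between a weighted quadratic-covariation Riemann sum and a left-point Stieltjes sum against the pure-jump bounded-variation process $[X^i,X^j]$, which requires careful handling of the jumps of the c\`adl\`ag integrand $V'$ and of the weighting in the polarisation identity.
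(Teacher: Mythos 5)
Your overall strategy matches the paper's: write the rough integral as the limit of compensated Riemann sums via \cite[Theorem~31]{Friz2017}, recognize $\sum_k V_{t_k}X_{t_k,t_{k+1}}\to\int_0^T V_{u-}\,\dd X_u$ in probability, and reduce part~(i) to showing the second-order correction vanishes; part~(ii) is then deduced exactly as in the paper. The decisive difference is that you try to control the \emph{weighted} correction $\sum_k V'_{t_k}\X^{ij}_{t_k,t_{k+1}}$ directly by recasting it as a single stochastic integral, whereas the paper first strips off $V'$ (``since $V'$ is bounded, we are left to show $\sup_\tau\lvert\sum\X^{ij}\rvert\to 0$''), and only then uses martingale estimates. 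That reduction is not cosmetic: it is exactly what makes the martingale machinery applicable.

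The gap in your argument is that $\int_0^T\phi^n_r\,\dd X^j_r$ is in general not a bona fide It\^o integral, so Burkholder--Davis--Gundy and the $L^2$ isometry you invoke do not apply. The component $X^j=Z^{i_2}_{\cdot-r_{j_2}}$ with $j_2>0$ is a \emph{delayed} martingale, and it is not an $(\mathcal{F}_t)$-semimartingale: over any interval $[s,s+\delta]$ with $\delta<r_{j_2}$ the whole path of $X^j$ is already $\mathcal{F}_s$-measurable, which would force the martingale part of any Doob--Meyer decomposition to be constant there, contradicting the positive quadratic variation of $Z$. The only way to interpret the iterated integral $\X^{ij}_{s,t}=\int_s^t X^i_{u-}\,\dd X^j_u$ is through the deterministic time change $u\mapsto u-r_{j_2}$, which rewrites it as an integral against the genuine martingale $Z^{i_2}$; this works because the more delayed integrand $X^i$ remains predictable in the shifted filtration. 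Your weight $V'_{t_k}$ destroys this structure: after the same time change, $V'_{t_k}$ (which is $\mathcal{F}_{t_k}$-measurable, $t_k$ being an \emph{undelayed} time) is not predictable, so $\phi^n$ is not an admissible integrand and the BDG bound is unjustified. For the same reason, the convergence $\sum_k V'_{t_k}X^i_{t_k,t_{k+1}}X^j_{t_k,t_{k+1}}\to\int_0^T V'_{u-}\,\dd[X^i,X^j]_u$ that you rely on for the ``upper triangular'' components is the semimartingale quadratic-covariation result and does not apply to delayed non-semimartingales; the paper's Lemma~\ref{lemma: quadratic covariation exists} only gives the \emph{unweighted} version. (Your remark that the jump set of $V'$ is $\dd[X^i,X^j]$-null is also unjustified -- both are merely countable and could overlap -- though this is harmless: a left-point Riemann--Stieltjes sum against a finite-variation c\`adl\`ag path always converges to the $\int V'_{u-}$ integral.) In short, you have flagged the right danger zone, but the fix is precisely to pass to the unweighted sums first, as the paper does, and then handle the $V'$-weight by an abstract argument using that $V'$ is bounded with finite $p$-variation and $p<3$, rather than by folding $V'$ into the integrand.
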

	
	\begin{proof}
		\emph{(i)} \emph{Step~1.} \cite[Theorem~31]{Friz2017} gives that
		\begin{equation*}
			\int_0^T V_u \dd \bX_u  = \lim_{|\mathcal{P}|\to 0} \sum_{(s,t) \in \mathcal{P}} (V_s X_{s,t} + V_s' \X_{s,t}),
		\end{equation*}
		where the limit is taken over any sequence of partitions $\mathcal{P}$ of the interval $[0,T]$ with mesh size $|\mathcal{P}|\to 0$,
		and it is known that
		\begin{equation*}
			\sum_{(s,t) \in \mathcal{P}} V_s X_{s,t} \to \int_0^T V_{u-} \dd Z_u,
		\end{equation*}
		in probability as $|\mathcal{P}| \to 0$, noting that $V_s X_{s,t} = V_s Z_{s,t}$ by definition of $V$, see e.g.~\cite[Chapter~II, Theorem~21]{Protter2005}, therefore the convergence holds $\P$-almost surely, possibly along some subsequence.
		
		\emph{Step~2.} We are left to show that
		\begin{equation}\label{eq: compensator term vanishes}
			\lim_{|\mathcal{P}|\to 0} \sum_{(s,t) \in \mathcal{P}} V'_s \X_{s,t} = 0,
		\end{equation}
		$\P$-almost surely, along some subsequence. It suffices to show that
		for $i, j = 1, \dots, d$,
		\begin{equation}
			\label{eq: RDE coincides with SDE}
			\sup_{\tau \in [0, T]}	\Big \lvert \sum_{(s,t) \in \mathcal{P} \cap [0,\tau]} \X^{ij}_{s,t} \Big \rvert \to 0, \qquad \text{as}\qquad |\mathcal{P}|\to 0,
		\end{equation}
		in probability, which then implies $\P$-almost sure convergence, along some subsequence:
		if $V'$ is $\P$-almost surely piecewise constant, then \eqref{eq: RDE coincides with SDE} implies \eqref{eq: compensator term vanishes}. Otherwise, for any $\epsilon > 0$, there exists a suitable piecewise constant approximation ${V'}^{,\epsilon}$ of $V'$ such that
		\begin{equation*}
			\|V' - {V'}^{,\epsilon}\|_\infty \leq \epsilon,
		\end{equation*}
		$\P$-almost surely, see \cite[Proposition~B.1]{Allan2023c}. By a standard interpolation argument (e.g.~\cite[Proposition~5.5]{Friz2010}), it follows, for any $q>p$, that
		\begin{equation*}
			\|V' - {V'}^{,\epsilon}\|_q \leq \|V' - {V'}^{,\epsilon}\|_p^{\frac{p}{q}} \|V' - {V'}^{,\epsilon}\|_\infty^{1 - \frac{p}{q}} \leq C \epsilon^{1-\frac{p}{q}},
		\end{equation*}
		$\P$-almost surely, where the implicit multiplicative constant $C$ is a random variable which does depend only on $p$, $q$ and $\|V'\|_p$. Using \cite[(5.1)]{Young1936}, we obtain that
		\begin{equation*}
			\Big| \sum_{(s,t) \in \mathcal{P}} V'_s \X_{s,t} - \sum_{(s,t) \in \mathcal{P}} {V'}^{,\epsilon}_s \X_{s,t} \Big| \leq \Big(1 + \zeta \Big(\frac{1}{q}+ \frac{2}{p}\Big) \Big) \|V' - {V'}^{,\epsilon}\|_q \|\X\|_{\p} \leq C \epsilon^{1-\frac{p}{q}},
		\end{equation*}
		$\P$-almost surely for any partition $\mathcal{P}$, where the implicit multiplicative constant $C$ is a random variable which does depend only on $p$, $q$, $\|V'\|_p$ and $\|\X\|_{\p}$. Consequently, if
		\begin{equation*}
			\lim_{|\mathcal{P}| \to 0} \sum_{(s,t) \in \mathcal{P}} {V'}^{,\epsilon}_s \X_{s,t} = 0,
		\end{equation*}
		holds $\P$-almost surely, then so does \eqref{eq: compensator term vanishes}, and it suffices to show \eqref{eq: RDE coincides with SDE}.
		
		\emph{Step~3.}
		From here on, for the proof of \eqref{eq: RDE coincides with SDE}, we consider the sequence of partitions $\mathcal{P}^n = \{0 = t^n_0 < t^n_1 < \ldots < t^n_{N_n} = T \}$, $n \in \N$, of the interval $[0,T]$ with vanishing mesh size, so that $|\mathcal{P}^n| \to 0$ as $n \to \infty$. Moreover, let $i \geq j$.
		
		Recall that
		\begin{equation*}
			\X^{ij}_{t^n_k,t^n_{k+1}} = \int_{t^n_k}^{t^n_{k+1}} X_{t^n_k,u-}^i \dd X^j_u  , \qquad  k = 0, \ldots, N_n-1.
		\end{equation*}
		Thus, the Burkholder--Davis--Gundy inequality gives that
		\begin{equation*}
			\E \bigg [ \sup_{\tau \in [0, T]}\Big \lvert \sum_{(s,t) \in \mathcal{P}^n  \cap [0,\tau]} \X^{ij}_{s,t} \Big \rvert^2 \bigg] \lesssim
			\E \bigg [ \int_0^T |X^i_{u_{|\mathcal{P}^n},u-}|^2 \dd [X^j]_u \bigg ],
		\end{equation*}
		where we write $t_{|\mathcal{P}^n} := \max\{t_k^n \in \mathcal{P}^n : t_k^n \leq t\}$, $t \in [0,T]$. Proceeding as in the proof of~\cite[Lemma~6.1]{Friz2023}, one can then show that
		\begin{equation*}
			\E \bigg [ \int_0^T |X^i_{u_{|\mathcal{P}^n},u-}|^2 \dd [X^j]_u \bigg ] \to 0 , \qquad\text{as}\qquad n \to \infty,
		\end{equation*}
		which gives~\eqref{eq: RDE coincides with SDE}.
		
		Therefore, by definition and Lemma~\ref{lemma: quadratic covariation exists} it holds that
		\begin{align*}
			\sum_{k=0}^{N_n-1} \X^{ji}_{t^n_k,t^n_{k+1}} &= - \sum_{k=0}^{N_n-1} \X^{ij}_{t^n_k,t^n_{k+1}} + X^i_{t^n_k,t^n_{k+1}} X^j_{t^n_k,t^n_{k+1}} - [X^i,X^j]_{t^n_k,t^n_{k+1}} \\
			&= - \sum_{k=0}^{N_n-1} \X^{ij}_{t^n_k,t^n_{k+1}} + \sum_{k=0}^{N_n-1} X^i_{t^n_k,t^n_{k+1}} X^j_{t^n_k,t^n_{k+1}} - \sum_{s\leq T} \Delta_s X^i \Delta_s X^j \\
			&\to 0,
		\end{align*}
		as $n\to \infty$, where the convergence holds uniformly in probability, which then concludes the proof.
		
		\emph{(ii)} Let $Y$ be the solution of the rough differential equation~\eqref{eq: RDE with constant delay} driven by the random rough path $\bX = (X,\X)$, see Proposition~\ref{prop: RDE with constant delay}~(i). We note that the assumption on $(V,V')$ in (i) does fit into this setting, where $(V(\omega),V'(\omega)) = (F(Y(\omega)),F'(Y(\omega),Y'(\omega))$ for some functional $F$, see Section~\ref{sec: RDEs with constant delay}. As the rough and It{\^o} integral do coincide $\P$-almost surely by~(i), we infer that $Y$ is also a solution of the SDE~\eqref{eq: SDE with delay}, which has a unique solution (by e.g.~\cite[Chapter~V, Theorem~7]{Protter2005}).
	\end{proof}
	
	\begin{remark}
		As a consequence of Proposition~\ref{prop: RDE with constant delay} and Proposition~\ref{prop: RDE and SDE coincide}~(ii), one can apply the continuity of the It{\^o}--Lyons map (Theorem~\ref{thm: continuity}) to derive pathwise stability results for stochastic differential equations with delay like~\eqref{eq: SDE with delay}. In particular, the map $y\mapsto Y$, mapping the initial path~$y$ to the associated solution~$Y$ of the SDE~\eqref{eq: SDE with delay}, is continuous on the space of controlled paths, which resolves an old observation, pointed out by Mohammed~\cite{Mohammed1986}, about the non-continuity of the flow of stochastic differential equations with delay. The latter is a consequence of the discontinuity of stochastic integration when using an unsuitable topology for the integrands.
	\end{remark}
	
	\begin{remark}
		While we considered square-integrable martingales and the associated stochastic differential equations with constant delay in this section, the presented results can be generalized in a fairly straightforward manner to:
		\begin{itemize}
			\item[(i)] c{\`a}dl{\`a}g local martingales using standard localization arguments;
			\item[(ii)] c{\`a}dl{\`a}g semimartingales using the classical estimates for Young integrals, see e.g. \cite[Proposition~2.4]{Friz2018} and~\cite[Theorem~3.1]{Liu2018}, to show that one can suitably lift~$X$ to a random rough path with additional Young integrals;
			\item[(iii)] Young semimartingales (also known as semimartingales in the sense of Norvai\v{s}a \cite{Norvaivsa2003}), i.e.~$Z = M + \varphi$, for some martingale $M$ and some c{\`a}dl{\`a}g adapted process $\varphi$ with $\varphi(\omega) \in D^q([0,T];\R^e)$ for almost every $\omega \in \Omega$, for some $q \in [1,2)$;
			\item[(iv)] SDEs/RDEs with variable delay of the form~\eqref{eq: RDE with variable delay}, as long as $\eta$ is assumed to be bounded with $\eta(t) \geq \epsilon$, $t \in [0,T]$, for some $\epsilon > 0$.
		\end{itemize}
	\end{remark}

	\appendix
	\section{Local estimates for rough integration}\label{sec: appendix}
	
	The following local estimates are needed to prove the existence and uniqueness result in Theorem~\ref{thm: existence and uniqueness}, and the continuity result in Theorem~\ref{thm: continuity}.
	
	\begin{lemma}\label{lemma: estimate rough integral}
		Let $\bX \in \cD^p([0,T];\R^d)$ for $p \in (2,3)$ and $(Y,Y') \in \cV^p_X([0,T];\R^k)$. Suppose that the non-anticipative functional $(F,F') \colon \cV^p_X([0,T];\R^k) \to \cV^p_X([0,T];\cL(\R^d;\R^k))$ satisfies Assumption~\ref{assumption: existence}~(i) with some constant $C_F$. Then, we have the local estimate
		\begin{equation*}
			\|R^{\int_0^\cdot F(Y) \d \bX}\|_{\p,[s,t]} \lesssim C_F (1 + \|Y,Y'\|_{X,p,[s,t]})^2 (1 + \|X\|_{p,[s,t]})^2 \|\bX\|_{p,[s,t]},
		\end{equation*}
		for all $(s,t) \in \Delta_T$, where the implicit multiplicative constant depends only on $p$.
	\end{lemma}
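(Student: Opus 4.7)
The plan is to identify $Z_t := \int_0^t F_s(Y) \dd \bX_s$ as a controlled path with Gubinelli derivative $Z' = F(Y)$, so that
$$
R^Z_{s,t} = Z_{s,t} - F_s(Y) X_{s,t} = F'_s(Y,Y') \X_{s,t} + E_{s,t},
$$
where $E_{s,t} := Z_{s,t} - F_s(Y) X_{s,t} - F'_s(Y,Y') \X_{s,t}$ is the compensated rough-sum error. The pointwise sewing bound recalled right after~\eqref{eq: rough path integral}, i.e.~\cite[Proposition~2.6]{Friz2018}, provides
$$
|E_{s,t}| \lesssim_p \|R^{F(Y)}\|_{\p,[s,t)} \|X\|_{p,[s,t]} + \|F'(Y,Y')\|_{p,[s,t)} \|\X\|_{\p,[s,t]},
$$
and the task is then to promote this pointwise bound to a $\p$-variation bound on $R^Z$ and insert Assumption~\ref{assumption}~(i).

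For the principal piece $F'(Y,Y')\X$, a partitionwise application of $|F'_u(Y,Y')| \leq \|F'(Y,Y')\|_{\infty,[s,t]}$ gives
$$
\|F'(Y,Y')\,\X\|_{\p,[s,t]} \leq \|F'(Y,Y')\|_{\infty,[s,t]} \|\X\|_{\p,[s,t]},
$$
with $\|F'(Y,Y')\|_{\infty,[s,t]} \leq |F'_s(Y,Y')| + \|F'(Y,Y')\|_{p,[s,t]} \leq \|F(Y),F'(Y,Y')\|_{X,p,[s,t]}$. For $E$, the right-hand side of the sewing bound is built from the superadditive controls $w_1 := \|R^{F(Y)}\|_{\p,\cdot}^{\p}$, $w_2 := \|X\|_{p,\cdot}^{p}$, $w_3 := \|F'(Y,Y')\|_{p,\cdot}^{p}$, $w_4 := \|\X\|_{\p,\cdot}^{\p}$ raised to fractional powers. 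Raising to the $\p$-th power and using both $w_i(u,v) \leq w_i(s,t)$ and $\sum_{[u,v]} w_i(u,v) \leq w_i(s,t)$ for $[u,v] \subset [s,t]$, a computation of the form
$$
\sum_{[u,v]} w_1(u,v) w_2(u,v)^{1/2} \leq w_2(s,t)^{1/2} \sum_{[u,v]} w_1(u,v) \leq w_1(s,t) w_2(s,t)^{1/2}
$$
and its analogue for $w_3^{1/2} w_4$, followed by the subadditivity of $x \mapsto x^{2/p}$ (valid since $2/p < 1$ for $p > 2$), will give
$$
\|E\|_{\p,[s,t]} \lesssim_p \|R^{F(Y)}\|_{\p,[s,t]} \|X\|_{p,[s,t]} + \|F'(Y,Y')\|_{p,[s,t]} \|\X\|_{\p,[s,t]},
$$
with both factors again dominated by $\|F(Y),F'(Y,Y')\|_{X,p,[s,t]}$.

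Combining the two contributions via the triangle inequality for the $\p$-seminorm yields
$$
\|R^Z\|_{\p,[s,t]} \lesssim_p \|F(Y),F'(Y,Y')\|_{X,p,[s,t]} \bigl(\|X\|_{p,[s,t]} + \|\X\|_{\p,[s,t]}\bigr) = \|F(Y),F'(Y,Y')\|_{X,p,[s,t]} \|\bX\|_{p,[s,t]},
$$
and the asserted bound follows by substituting the growth estimate $\|F(Y),F'(Y,Y')\|_{X,p,[s,t]} \leq C_F (1 + \|Y,Y'\|_{X,p,[s,t]})^2 (1 + \|X\|_{p,[s,t]})^2$ from Assumption~\ref{assumption}~(i). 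The only genuinely delicate step is the control-function bookkeeping for $E$; the càdlàg nature is handled transparently by the half-open interval $[s,t)$ in the sewing bound, and for $p \in (2,3)$ all exponents that arise are benign.
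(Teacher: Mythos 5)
Your proof is correct and follows essentially the same decomposition as the paper: split $R^{\int F(Y)\,\mathrm{d}\bX}_{s,t}$ into the principal part $F'_s(Y,Y')\,\X_{s,t}$ and the sewing error $E_{s,t}$, then reduce to the growth estimate in Assumption~\ref{assumption}~(i). The one real difference is that the paper invokes the variational sewing lemma \cite[Theorem~2.5]{Friz2018} as a black box to obtain the $\p$-variation bound on $E$ in one stroke, after computing $\delta\Xi_{u,r,v} = R^{F(Y)}_{u,r}X_{r,v} + F'(Y,Y')_{u,r}\X_{r,v}$ via Chen's relation, whereas you instead cite the pointwise consequence \cite[Proposition~2.6]{Friz2018} and then redo by hand the control-function bookkeeping (superadditivity of $w_1,\dots,w_4$, bounding one factor on the partition interval by its value on $[s,t]$, summing the other, and finally the subadditivity of $x\mapsto x^{2/p}$) that the sewing lemma already packages. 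Your bookkeeping is sound — in particular $\p=p/2\in(1,3/2)$, so the $\p$-variation seminorm does obey the triangle inequality used to split $R^Z$ into the two contributions — but the paper's route is slightly leaner and avoids re-proving what \cite[Theorem~2.5]{Friz2018} already supplies.
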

	
	\begin{proof}
		Let $(V,V') \in \cV^p_X([0,T];\cL(\R^d;\R^k))$, and set $\Xi_{u,v} := V_u X_{u,v} + V'_u \X_{u,v}$ and $\delta \Xi_{u,r,v} := \Xi_{u,v} - \Xi_{u,r} - \Xi_{r,v}$ for $s \leq u < r < v \leq t$. Here, strictly speaking, in writing $V'_u \X_{u,v}$, we use the canonical identification of $\cL(\R^d; \cL(\R^d;\R^k))$ with $\cL(\R^d \otimes \R^d; \R^k)$. We note that
		\begin{align*}
			\Big|R_{u,v}^{\int_0^\cdot V \d \bX}\Big|
			&\leq \Big|\int_u^v V_r \dd \bX_r - \Xi_{u,v} \Big| + |V'_u| |\X_{u,v}| \\
			&\leq \Big| \int_u^v V_r \dd \bX_r - \Xi_{u,v} \Big| + (|V'_s| + \|V'\|_{p,[s,t]}) |\X_{u,v}|.
		\end{align*}
		Using Chen's relation, one can show that
		\begin{equation*}
			- \delta \Xi_{u,r,v} = R_{u,r}^V X_{r,v} + V'_{u,r} \X_{r,v},
		\end{equation*}
		which gives that
		\begin{align*}
			&|\delta \Xi_{u,r,v}| \\
			&\quad \leq \|R^V\|_{\p,[u,r]} \|X\|_{p,[r,v]} + \|V'\|_{p,[u,r]} \|\X\|_{\p,[r,v]} \\
			&\quad = w_{1,1}(u,r)^{\frac{2}{p}} w_{2,1}(r,v)^{\frac{1}{p}} + w_{1,2}(u,r)^{\frac{1}{p}} w_{2,2}(r,v)^{\frac{2}{p}},
		\end{align*}
		where $w_{1,1}(s,t) := \|R^V\|_{\p,[s,t]}^{\p}$, $w_{2,1}(s,t) := \|X\|_{p,[s,t]}^p$, $w_{1,2}(s,t) := \|V'\|_{p,[s,t]}^p$, $w_{2,2}(s,t) := \|\X\|_{\p,[s,t]}^{\p}$, $(s,t) \in \Delta_T$, are control functions and $\frac{1}{p} + \frac{2}{p} > 1$.
		It then follows from the generalized sewing lemma, see~\cite[Theorem~2.5]{Friz2018}, that
		\begin{align*}
			&\|R^{\int_0^\cdot V \d \bX}\|_{\p,[s,t]} \\
			&\quad \lesssim (\|R^V\|_{\p,[s,t]} \|X\|_{p,[s,t]} + \|V'\|_{p,[s,t]} \|\X\|_{\p,[s,t]} + (|V'_s| + \|V'\|_{p,[s,t]}) \|\X\|_{\p,[s,t]}) \notag \\
			&\quad \lesssim \|V,V'\|_{X,p,[s,t]} \|\bX\|_{p,[s,t]},
		\end{align*}
		where the implicit multiplicative constant depends only on $p$. Using Assumption~\ref{assumption: existence} (i), we therefore obtain the estimate for $(V,V') = (F(Y),F'(Y,Y'))$.
	\end{proof}
	
	\begin{lemma}\label{lemma: Lipschitz estimate rough integral}
		For $p \in (2,3)$, suppose $\bX, \tbX \in \cD^p([0,T];\R^d)$, $(Y,Y') \in \cV^p_X([0,T];\R^k)$, $(\tY,\tY')~\in \cV^p_{\tX}([0,T];\R^k)$, and that the non-anticipative functional $(F,F') \colon \cV^p_X([0,T];\R^k) \to \cV^p_X([0,T];\cL(\R^d;\R^k))$ satisfies Assumption~\ref{assumption: Lipschitz} (i) and (ii) given $X,\tX$. Then, we have the local estimate
		\begin{align*}
			&\|R^{\int_0^\cdot F(Y) \d \bX} - R^{\int_0^\cdot F(\tY) \d \tbX}\|_{\p,[s,t]} \\
			&\quad \lesssim \C (|Y_s - \tY_s| + \|Y,Y';\tY,\tY'\|_{X,\tX,p,[s,t]} + \|X - \tX\|_{p,[s,t]}) (\|\bX\|_{p,[s,t]} \vee \|\tbX\|_{p,[s,t]}) \\
			&\qquad + C_F (1 + K)^2 (1 + \|X\|_{p,[s,t]} \vee \|\tX\|_{p,[s,t]})^2 \|\bX;\tbX\|_{p,[s,t]}
		\end{align*}
		for all $(s,t) \in \Delta_T$, if $\|Y,Y'\|_{X,p,[s,t]}$, $\|\tY,\tY'\|_{\tX,p,[s,t]} \leq K$, for some $K > 0$, where the implicit multiplicative constant depends on $p$, $\|\bX\|_p$ and $\|\tbX\|_p$.
	\end{lemma}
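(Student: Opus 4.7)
The plan is to extend the sewing argument of Lemma~\ref{lemma: estimate rough integral} to the two-driver setting. Writing $V = F(Y)$, $V' = F'(Y,Y')$, $\tilde V = F(\tY)$, $\tilde V' = F'(\tY, \tY')$, and introducing the compensated germs
\[
\Xi_{u,v} := V_u X_{u,v} + V'_u \X_{u,v}, \qquad \tilde\Xi_{u,v} := \tilde V_u \tX_{u,v} + \tilde V'_u \tbbX_{u,v},
\]
one computes exactly as in Lemma~\ref{lemma: estimate rough integral} that $\delta\Xi_{u,r,v} = R^V_{u,r} X_{r,v} + V'_{u,r} \X_{r,v}$, and analogously for $\delta\tilde\Xi$. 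Subtracting and telescoping each of the two products via $ab - \tilde a \tilde b = (a-\tilde a)b + \tilde a(b-\tilde b)$ yields the four-term identity
\[
\delta(\Xi - \tilde\Xi)_{u,r,v} = (R^V - R^{\tilde V})_{u,r} X_{r,v} + R^{\tilde V}_{u,r}(X - \tX)_{r,v} + (V' - \tilde V')_{u,r} \X_{r,v} + \tilde V'_{u,r}(\X - \tbbX)_{r,v}.
\]

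Next I apply the generalized sewing lemma \cite[Theorem~2.5]{Friz2018} to the germ $\Xi - \tilde\Xi$, which controls $\|I - \tilde I - (\Xi - \tilde\Xi)\|_{\p,[s,t]}$ (with $I = \int_0^\cdot V \d \bX$, $\tilde I = \int_0^\cdot \tilde V \d \tbX$) by the natural sum of the four $p$-vs-$\p$ products read off from the identity above. Since $R^{I - \tilde I}_{u,v}$ differs from $I_{u,v} - \tilde I_{u,v} - (\Xi_{u,v} - \tilde\Xi_{u,v})$ only by the extra term $V'_u \X_{u,v} - \tilde V'_u \tbbX_{u,v}$, whose $\p$-variation is bounded pointwise by $(|V'_s - \tilde V'_s| + \|V' - \tilde V'\|_{p,[s,t]}) \|\X\|_{\p,[s,t]} + (|\tilde V'_s| + \|\tilde V'\|_{p,[s,t]}) \|\X - \tbbX\|_{\p,[s,t]}$ via the same telescoping trick, both contributions combine into
\[
\begin{aligned}
&\|R^{I - \tilde I}\|_{\p,[s,t]} \\
&\quad \lesssim_p \|V,V';\tilde V,\tilde V'\|_{X,\tX,p,[s,t]} (\|\bX\|_{p,[s,t]} \vee \|\tbX\|_{p,[s,t]}) + \|\tilde V,\tilde V'\|_{\tX,p,[s,t]} \|\bX;\tbX\|_{p,[s,t]}.
\end{aligned}
\]

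Finally, Assumption~\ref{assumption}(ii) bounds $\|V,V';\tilde V,\tilde V'\|_{X,\tX,p,[s,t]}$ by $\C(|Y_s - \tY_s| + \|Y,Y';\tY,\tY'\|_{X,\tX,p,[s,t]} + \|X - \tX\|_{p,[s,t]})$, and Assumption~\ref{assumption}(i) applied to $(\tY,\tY')$, together with the hypothesis $\|\tY,\tY'\|_{\tX,p,[s,t]} \leq K$, bounds $\|\tilde V,\tilde V'\|_{\tX,p,[s,t]}$ by $C_F(1+K)^2(1+\|\tX\|_{p,[s,t]})^2$, which gives the claim. The only subtlety, and the reason Lemma~\ref{lemma: estimate rough integral} does not apply verbatim, is the bookkeeping: $(Y,Y')$ and $(\tY,\tY')$ live in different Banach spaces of controlled paths over different drivers, so the decomposition of $\delta(\Xi - \tilde\Xi)$ must pair $R^{\tilde V}$ with $X - \tX$ and $\tilde V'$ with $\X - \tbbX$ in order to match exactly the norms supplied by Assumption~\ref{assumption}.
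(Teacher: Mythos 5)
Your proof is correct. The only substantive difference from the paper's argument is that you re-derive the intermediate estimate
\begin{equation*}
\|R^{\int_0^\cdot V \d \bX} - R^{\int_0^\cdot \tilde{V} \d \tbX}\|_{\p,[s,t]} \lesssim \|V,V';\tilde{V},\tilde{V}'\|_{X,\tX,p,[s,t]} \|\bX\|_{p,[s,t]} + \|\tilde{V},\tilde{V}'\|_{\tX,p,[s,t]} \|\bX;\tbX\|_{p,[s,t]}
\end{equation*}
from first principles via the generalized sewing lemma and the four-term telescoping of $\delta(\Xi - \tilde\Xi)$, whereas the paper simply cites this inequality from \cite[Lemma~3.4]{Friz2018} and moves on to applying Assumption~\ref{assumption}. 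Your version is therefore more self-contained; the telescoping identity, the extra pointwise term $V'_u \X_{u,v} - \tilde V'_u \tbbX_{u,v}$, and the final pass through Assumption~\ref{assumption}~(i) and~(ii) are all handled correctly, and the resulting bound is in fact marginally sharper than the stated one (with $(1+\|\tX\|_{p})^2$ rather than $(1+\|X\|_p\vee\|\tX\|_p)^2$), which is harmless.
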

	
	\begin{proof}
		It follows from~\cite[Lemma~3.4]{Friz2018} that for any $(V,V') \in \cV^p_X$, $(\widetilde{V},\widetilde{V}') \in \cV^p_{\tX}$,
		\begin{align*}
			&\|R^{\int_0^\cdot V \d \bX} - R^{\int_0^\cdot \widetilde{V} \d \tbX}\|_{\p,[s,t]} \notag \\
			&\quad \lesssim_p (1 + \|\bX\|_{p,[s,t]} + \|\widetilde{\bX}\|_{p,[s,t]}) (\|V,V';\widetilde{V},\widetilde{V}'\|_{X,\tX,p,[s,t]} \|\bX\|_{p,[s,t]} \notag  \\
			&\qquad + \|\widetilde{V},\widetilde{V}'\|_{\tX,p,[s,t]} \|\bX;\tbX\|_{p,[s,t]}) \notag \\
			&\quad \lesssim
			\|V,V';\widetilde{V},\widetilde{V}'\|_{X,\tX,p,[s,t]} \|\bX\|_{p,[s,t]} + \|\widetilde{V},\widetilde{V}'\|_{\tX,p,[s,t]} \|\bX;\tbX\|_{p,[s,t]},
		\end{align*}
		where the implicit multiplicative constant depends on $p$, $\|\bX\|_p$ and $\|\tbX\|_p$. For $(V,V') = (F(Y),F'(Y,Y'))$, $(\widetilde{V},\widetilde{V}') = (F(\tY),F'(\tY,\tY'))$, using Assumption~\ref{assumption: Lipschitz}~(ii), we therefore obtain the estimate.
	\end{proof}
	
	\bibliography{quellen}{}
	\bibliographystyle{amsalpha}
	
\end{document}